\title{Supercompact Extender Based Magidor-Radin Forcing}
\author{Carmi Merimovich}
\address{ 
School of Computer Science \\
Tel-Aviv Academic College \\
Rabenu Yeroham St.\\
Tel-Aviv 68182 \\
Israel
}
\email{carmi@cs.mta.ac.il}
\date{August 1, 2016}
\subjclass[2010]{Primary 03E35, 03E55}
\keywords{}
\newcommand{\vbm}{\accentset{\mid}{\gm}}
\newcommand{\vbn}{\accentset{\mid}{\gn}}
\def\mypic{0}		
\theoremstyle{plain}
\newtheorem{theorem}{Theorem}[section]
\newtheorem*{theorem*}{Theorem}
\newtheorem*{Expected*}{Aim}
\newtheorem{lemma}[theorem]{Lemma}
\newtheorem{claim}[theorem]{Claim}
\newtheorem{corollary}[theorem]{Corollary}
\theoremstyle{definition}
\newtheorem{definition}[theorem]{Definition}
\theoremstyle{remark}
\numberwithin{equation}{theorem}
\numberwithin{lemma2}{theorem}
\crefname{lemma}{lemma}{lemmas}
\crefname{clause}{clause}{clauses}
\crefname{claim}{claim}{claims}
\DeclareMathOperator{\OB}{OB}
\newcommand{\Es}{\Bar{E}}
\newcommand{\gns}{\Bar{\gn}}
\newcommand{\gts}{\Bar{\gt}}
\newcommand{\gnv}{{\Vec{\gn}}}
\newcommand{\gms}{\Bar{\gm}}
\newcommand{\gmv}{{\Vec{\gm}}}
\newcommand{\ga}{\alpha}
\newcommand{\gb}{\beta}
\newcommand{\gc}{\chi}
\newcommand{\gD}{\Delta}
\newcommand{\gee}{\epsilon}
\newcommand{\gi}{\iota}
\newcommand{\gk}{\kappa}
\newcommand{\gl}{\lambda}
\newcommand{\gm}{\mu}
\newcommand{\gn}{\nu}
\newcommand{\gp}{\pi}
\newcommand{\gr}{\rho}
\newcommand{\gs}{\sigma}
\newcommand{\gt}{\tau}
\newcommand{\gw}{\omega}
\newcommand{\gx}{\xi}
\newcommand{\gz}{\zeta}
\newcommand{\leftexp}[2]{{\vphantom{#2}}^{#1}{#2}}
\newcommand{\func}{\mathrel{:}}
\newcommand{\VN}[1]{\Check{#1}}
\newcommand{\CN}[1]{\underset{\widetilde{}}{#1}}
\newcommand{\GN}[1]{\utilde{#1}}
\newcommand{\union}{\cup}
\newcommand{\bigunion}{\bigcup}
\newcommand{\intersect}{\cap}
\newcommand{\bigintersect}{\bigcap}
\newcommand{\forces}{\mathrel\Vdash}
\newcommand{\incompatible}{\perp}
\newcommand{\ncompatible}{\perp}
\newcommand{\compatible}{\parallel}
\newcommand{\decides}{\mathrel\Vert}
\newcommand{\subelem}{\prec}
\newcommand{\append}{\mathop{{}^\frown}}
\newcommand{\restricted}{\mathrel{\restriction}}
\newcommand{\power}[1]{\lvert#1\rvert}
\DeclareMathOperator{\PSet}{{\mathcal{P}}}
\newcommand{\On}{\ensuremath{\text{On}}}
\newcommand{\ordered}[1]{\ensuremath{\langle #1 \rangle}}
\newcommand{\set}[1]{\ensuremath{\{ #1 \}}}
\newcommand{\setof}[2]{\ensuremath{\{ #1 \mid #2 \}}}
\newcommand{\ordof}[2]{\ensuremath{\ordered{ #1 \mid #2 }}}
\newcommand{\formula}[1]{{}\text{``} #1 {}\text{''}}
\DeclareMathOperator{\mo}{o}
\DeclareMathOperator{\otp}{otp}
\DeclareMathOperator{\crit}{crit}
\DeclareMathOperator{\dom}{dom}
\DeclareMathOperator{\cf}{cf}
\DeclareMathOperator{\Ult}{Ult}
\DeclareMathOperator{\Suc}{Suc}
\DeclareMathOperator{\Lev}{Lev}
\DeclareMathOperator{\ran}{ran}
\DeclareMathOperator*{\dintersect}{\triangle}
\DeclareMathOperator{\mc}{mc}
\newcommand{\PP}{\mathbb{P}}
\DeclareMathOperator{\ES}{ES}
\newcommand{\vE}{\Vec{E}}
\newcommand{\bT}{\leftexp{<\gw}{T}}
\newcommand{\ogn}{\mathring{\gn}}
\newcommand{\ogm}{\mathring{\gm}}
\g@addto@macro\bfseries{\boldmath}
\begin{document}
\maketitle
\begin{abstract}
The extender based Magidor-Radin forcing is being generalized
	to supercompact type extenders.
\end{abstract}
\section{Introduction}
This work\footnote{
			Most of this work  was done somewhat
						after	\cite{Merimovich2011c} was completed.
				Lacking an application, which admittedly was lacking 
						also in \cite{Merimovich2011c},
	 							it was mainly distributed among interested parties.
				Gitik, observing the utility of this forcing to some HOD constructions
						(see \cite{GitikMerimovichPreprint}),
			has urged us to  bring the work into publishable state.}
 continues the project of of generalizing the extender based Prikry forcing
 		 \cite{GitikMagidor1992} to larger and larger cardinals.
In \cite{Merimovich2003,Merimovich2011b} the methods introduced in
		 \cite{GitikMagidor1992}
	(which generalized Prikry forcing \cite{Prikry1968} from using a measure to using an extender),
	were used to generalize the Magidor \cite{Magidor1978} and Radin \cite{Radin1982} forcing notions to use a sequence of extenders.
In a different direction \cite{Merimovich2011c} used the methods of \cite{GitikMagidor1992}
	to define the extender based Prikry forcing over extenders which have higher directedness properties than their critical point.
Such extenders give rise to supercompact type embeddings.
Generalization of Prikry forcing to fine ultrafilters yielding supercompact type embeddings
	appeared in \cite{Magidor1977I}.
Extending this forcing notion to Magidor-Radin type forcing notions were done
	in \cite{ForemanWoodin1991} and \cite{Krueger2007}.
In the current paper we use extenders with higher directedness properties to define
	the extender based Magidor-Radin forcing notion.
All of the forcing notions mentioned above are of course of Prikry type.
For more information on Prikry type forcing notions one should consult \cite{Gitik2010}.

Before stating the theorem of this paper we need to make some notions precise. 
Assume $E$ is an extender.
We let $j_E\func V \to M \simeq \Ult(V,E)$ be the natural embedding of $V$ into the transitive collpase of the
	ultrapower $\Ult(V,E)$.
We denote by $\crit E$ the critical point of the embedding $j_E$.
In  principle, an extender is a directed family of ultrafilters and projections.
We denote by $\gl(E)$ a degree of directedness holding for  the extender $E$.
We do not require $\gl(E)$ to be optimal, i.e., $\gl(E)$ is not necessarily the minimum cardinal
	for which $E$ is not $\gl(E)^+$-directed.
Note $M \supseteq \leftexp{<\gl(E)}{M}$.

A sequence of extenders $\Vec{E} = \ordof{E_\gx}{\gx < \mo(\Vec{E})}$, all with the same critical point $\crit E_\gx$
		and the same directedness size  $\gl(E_\gx)$,
		is said be Mitchell increasing
		if for each $\gx < \mo(\Vec{E})$ we have $\ordof{E_{\gx'}}{\gx' < \gx} \in M_\gx \simeq \Ult(V,E_\gx)$.
We will denote by $\crit(\Vec{E})$ and $\gl(\Vec{E})$  the common values of  $\crit E_\gx$ and $\gl(E_\gx)$, respectively.

If $\Vec{E} = \ordof{E_\gx}{\gx < \mo(\Vec{E})}$ is a Mithcell increasing sequence of extenders
	and $\ga \in [\crit \vE, j_{E_0}(\gk))$ then $\Es = \ordered{\ga, \Vec{E}}$ is said to be an extender sequence.
Hence an extender sequence is an ordered pair with the first coordinate being an ordinal and the
	second coordinate being a Mitchell increasing sequence of extenders.
Note that an empty sequence of extenders is legal in an extender sequence, e.g., $\ordered{\ga, \ordered{}}$
	is an extender sequence.
Let $\ES$ be the collection of extender sequences.
If $\Es$ is an extender sequence then we denote the projections to the first and second coordinates
		by $\mathring{\Es}$ and $\accentset{\mid}{\Es}$, respectively.
The ordinals at the first coordinate of an extender sequence induce an order $<$ on $\ES$
	by setting $\gns < \gms$ if $\mathring{\gns} < \mathring{\gms}$.
We lift the functions defined on the Mitchell increasing sequence of extenders to extender sequences in the obvious way, i.e., $\mo(\Es) = \mo(\accentset{\mid}{\Es})$ and $\gl(\Es)=\gl(\accentset{\mid}{\Es})$.
We will also abuse notation by writing $\Es_\gx$ for the extender $E_\gx$.
There are two restrictions we have on $\gl(\Vec{E})$.
The first one seems a bit technical.
We demand $\gl(\Es)^{<\crit\Es} = \gl(\Es)$ due to  limitations we encountered in \cref{GetGoodPair}.
The second one is more substantial.
We demand $\gl(\Es) \leq j_{E_0}(\crit(E_0))$.
(It seems this last demand can be removed for the special case $\mo(\Vec{E}) = 1$.)
With all these preliminaries at hand we can write the theorem proved in this paper.
\begin{theorem*}
Assume the GCH.
Let $\Vec{E}$ be a Mitchell increasing sequence 
		such that $\gl(\vE) < j_{E_0}(\crit(\Vec{E}))$ and 
		$\gm^{<\crit \vE} < \gl(\vE)$ for each $\gm < \gl(\vE)$.
Furthermore, assume 
			$\gee \leq j_{E_0}(\gk)$.
Then there is a forcing notion $\PP(\Vec{E}, \gee)$ such that the following hold in $V[G]$, where $G \subseteq \PP(\Vec{E}, \gee)$ is generic.
There is a set $G^\gk \subseteq \ES$ such that
	$G^\gk \union \set{ \ordered{\crit \Vec{E}, \Vec{E}} }$ is increasing and
	for each $\gns \in G^\gk \union \set{ \ordered{\crit \Vec{E}, \Vec{E}} }$ such that 
				$\mo(\gns) > 0$ the following hold:
\begin{enumerate}
\item
		$\setof{\crit \gms} {\gms \in G^\gk, \gms < \gns} \subseteq \mathring{\gns}$ is a club.
\item
		$\crit \gns$ and $\gl(\gns)$ are preserved in $V[G]$, and
		$(\crit \gns^+ = \gl(\gns))^{V[G]}$.
\item
		If $\mo(\gns)<\crit \gns$ is $V$-regular then $\cf \crit \gns = \cf \mo(\gns)$ in $V[G]$.
\item (Gitik)
		If $\mo(\gns) \in [\crit \gns, \gl(\gns))$ and $\cf(\mo(\gns)) \geq \crit(\gns)$
					 then $\cf \crit \gns = \gw$ in $V[G]$.
\item
		If $\mo(\gns) \in [\crit \gns, \gl(\gns))$ and $\cf(\mo(\gns)) < \crit(\gns)$
					 then $\cf \crit \gns = \cf\mo(\gns)$ in $V[G]$.		
\item
		If $\mo(\gns)= \crit(\gns)$ then $\cf \crit \gns  = \gw$ in $V[G]$.
\item
		If $\mo(\gns)= \gl(\gns)$ then $\crit \gns$ is regular in $V[G]$.
	\item
		If $\mo(\gns)= \gl(\gns)^{++}$  then $\crit \gns$ is measurable in $V[G]$.
\item
		$2^{\crit\gns} = \max \set{\gl(\gns), \power{\gee}}$.
\end{enumerate}
\end{theorem*}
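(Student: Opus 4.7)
The plan is to build $\PP(\vE, \gee)$ as a natural hybrid between the extender-based Prikry forcing of \cite{Merimovich2011c} and the extender-based Magidor--Radin forcing of \cite{Merimovich2003,Merimovich2011b}. A condition will be an $\gee$-indexed collection of finite stems of extender sequences, glued together by a common measure-one \emph{tree} $T$ whose nodes are extender sequences and whose branches describe the possible futures of the generic, organized level-by-level by $\mo$. The tree must project, under the maps coming from the $\Es_\gx$, onto measure-one sets with respect to each extender at the relevant Mitchell level, and it must respect the supercompact-type closure $M \supseteq \leftexp{<\gl(\vE)}{M}$. The direct extension order $\leq^*$ shrinks $T$ and enlarges the coordinate set below a fixed top object, while the full extension $\leq$ picks a branch of $T$ and incorporates it into the stem, exactly as in Magidor--Radin but with every move carried out through a single extender-sequence object.

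The core technical work is a Prikry-type lemma asserting that every statement $\gf$ in the forcing language is decided by some direct extension of any given condition. The proof proceeds by induction on $\mo(\vE)$: at a fixed Mitchell level one amalgamates incompatible direct extensions by a diagonal intersection inside the supercompact measure associated with that level, and then descends by normality of a projected ultrafilter. Two hypotheses are essential. First, $\gl(\vE) \leq j_{E_0}(\crit \vE)$ is what lets the relevant diagonal intersections live inside the target model and be transferred back through the ultrapower map. Second, $\gm^{<\crit \vE} < \gl(\vE)$ for every $\gm < \gl(\vE)$ guarantees that, when amalgamating over the $\gl(\vE)$-many branches below a node, the resulting set remains $\gl(\vE)$-measure-one; this is the content of the auxiliary \cref{GetGoodPair} referenced in the introduction.

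Once the Prikry lemma is in hand, the nine clauses follow by now-standard Magidor--Radin arguments. Clause (1) is forced by the tree requirement that the stem below any $\gns$ be closed and unbounded in $\mathring{\gns}$. Clause (2) and the equality $(\crit \gns^+ = \gl(\gns))^{V[G]}$ come from the fact that $\leq^*$ is $\crit \vE$-closed together with a count of conditions of bounded size; clause (9) is the same count with the $\gee$ contribution folded in. The cofinality clauses (3)--(6) are a case analysis on $\mo(\gns)$: for $\mo(\gns) < \crit \gns$ the generic stem is an increasing sequence of order type $\mo(\gns)$ cofinal in $\crit \gns$, while Gitik's clause (4) relies on the observation that when $\cf \mo(\gns) \geq \crit \gns$ the generic pattern of $\mo$-values along the stem is forced to have cofinality $\gw$. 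Clauses (7) and (8) push the relevant large-cardinal property of $\crit \gns$ through a quotient-forcing analysis: the tail forcing above $\gns$ is coherent enough, thanks to the Mitchell-increasing structure, to preserve regularity and measurability respectively. The principal obstacle throughout is the Prikry-lemma amalgamation step at a non-trivial Mitchell level, where one must combine $\gl(\vE)$-many direct extensions into a single one, keeping the trees intact at every level below and staying within $M \supseteq \leftexp{<\gl(\vE)}{M}$; it is exactly here that the restriction $\gl(\Es)^{<\crit \Es} = \gl(\Es)$ becomes indispensable, and where the current argument does not quite reach what ought to be optimal hypotheses.
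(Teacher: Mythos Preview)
Your high-level picture is right---this is indeed an extender-based Magidor--Radin forcing, and the nine clauses are eventually derived from a Prikry-type lemma together with a factoring of $\PP/p$ into a small lower part and an upper part. But the route you sketch for the Prikry lemma has a genuine gap, and the paper's actual proof is organized around a device you do not mention.

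You propose to prove the Prikry lemma by induction on $\mo(\vE)$, amalgamating incompatible direct extensions via a diagonal intersection. The obstacle is that in the extender-based setting two direct extensions $q,r\leq^* p$ need \emph{not} be compatible: their $f$-parts can disagree, since $\PP^*_f$ is a nontrivial poset of functions with growing domains. So the amalgamation step ``combine $\gl(\vE)$-many direct extensions into a single one'' does not go through as in plain Radin forcing; there is no single tree you can shrink to. The paper says this explicitly just before Lemma~4.4, and points out that the usual work-around (a long $\leq^*$-decreasing recursion inside an elementary substructure) breaks here because the substructures one needs are not closed enough when $\gl$ is, say, the successor of a singular.

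The paper's substitute is the notion of a \emph{good pair} $\ordered{N,f^*}$: a $\gk$-internally approachable $N\prec H_\gc$ together with an $f^*\in\PP^*_f$ that is generic for every dense open subset of $\PP^*_f$ lying in $N$. Once such a pair is fixed, all the ``$f$-part'' choices needed in the dense-set analysis are already made by $f^*$, so the remaining freedom is only in the trees, where strong direct extensions \emph{are} compatible and diagonal intersections do work. This is what drives Lemmas~4.4--4.7 and ultimately \cref{DenseHomogen}, from which the Prikry property follows. The hypothesis $\gm^{<\gk}<\gl$ for $\gm<\gl$ is used precisely to get such good pairs (\cref{GetGoodPair}), not for a measure-one amalgamation as you suggest. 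Relatedly, preservation of $\gl$ is obtained by a $\gl$-properness argument (again via good pairs), not merely by closure plus a counting of conditions. Your sketch would be adequate for ordinary Radin forcing, but for the extender-based version the good-pair machinery is the missing idea.
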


Thus for example,
	if we assume $\ordof{E_\gx}{\gx < \gw_1}$ is a Mitchell increasing sequence
	of extenders on $\gk$ giving rise to a $<\gk^{++}$-closed elementary embeddings (and no more), 
		then in the generic extension $\gk$ will change its cofinality to $\gw_1$,
				and $\gk^+$ would be collapsed.
				Moreover, there is a club of ordertype $\gw_1$ cofinal in $\gk$,
						and for each limit point $\gt$ in this club $\gt^+$ of the ground model
								is collapsed. The GCH would be preserved, and no other cardinals are collapsed.
								
	As another example,
	assume $\ordof{E_\gx}{\gx < \gw_1}$ is a Mitchell increasing sequence
	of extenders on $\gk$ giving rise to a $<\gk^{++}$-close elementary embeddings
		which are also $\gk^{+3}-strong$ (and no more), 
		then in the generic extension $\gk$ will change its cofinality to $\gw_1$,
				and $\gk^+$ would be collapsed.
				Moreover, there is a club of ordertype $\gw_1$ cofinal in $\gk$,
						and for each limit point $\gt$ in this club $\gt^+$ of the ground model
								is collapsed. 
									In this case we get $2^\gk = \gk^{++}$ and $2^\gt = \gt^{++}$
											for the limit points of the club.
			In fact we have $2^\gk = (\gk^{+3})_V$ and $2^\gt = (\gt^{+3})_V$,
					and we see only gap-2 in the generic extension since $\gk^+$ of the ground mode
					gets collapsed
							as do all the $\gt^+$ of the ground model.
					No other cardinal get collapsed.

The structure of the work is as follows.
In section \ref{sec:ExtendersAndNormality} a formulation of extenders useful
	for $\gl$-directed extenders is presented, and an appropriate diagonal intersection
			operation is introduced.
In section \ref{sec:Forcing} the forcing notion is defined and the properties of it which do not rely on
	understanding the dense subsets of the forcing are presented.
In section \ref{sec:Dense} claims regarding the dense subsets of the forcing notion are presented.
	This section is highly combinatorial in nature.
In section \ref{sec:KappaProperties} the influence of
	$\mo(\vE)$ on the properties of
	$\gk$ in the generic extension is shown.
	The claims here  rely on the structure of the dense subsets as analyzed in section \ref{sec:Dense}.

This work is self contained assuming large cardinals and forcing are known.
\section{$\gl$-Directed Extenders and Normality} \label[section]{sec:ExtendersAndNormality}
Assume the GCH.
Let $\vE = \ordof {E_\gx}{\gx < \mo(\Vec{E})}$ be a Mitchell increasing sequence of  
			$\gl$-directed extenders 
	such that $\gl \leq j_{E_0}(\gk)$ is regular and $\gl^{<\gk} = \gl$,
		where $\gk = \crit \Vec{E}$.
		For each $\gx < \mo(\Vec{E})$ let
			$j_{E_\gx} \func V \to M_\gx \simeq \Ult(V, E_\gx)$ be the natural embedding.
Assume $d \in [\gee]^{<\gl}$ and $\power{d}+1 \subseteq d$.
We let $\OB(d)$ be the set of functions $\gn \func \dom \gn \to \ES$ such that
			$\gk \in \dom \gn \subseteq d$, 
							and if $\ga, \gb\in \dom \gn$ and $\ga < \gb$ then $\ogn(\ga) < \ogn(\gb)$.
Define an order on $\OB(d)$ by saying for each pair $\gn,\gm \in \OB(d)$
	that $\gn < \gm$ if
		$\dom \gn \subseteq \dom \gm$,
		$\power{\gn} < \ogm(\gk)$,
		and for each $\ga \in \dom \gn$,
			$\gn(\ga) < \ogm(\gk)$.

For $\gx < \mo(\vE)$ and a set $d \in [\gee]^{<\gl}$  define the measure $E_\gx(d)$ on $\OB(d)$ as follows:
		\begin{align*}
			X \in E_\gx(d) \iff  \setof {\ordered{j_{E_\gx}(\ga), \ordered{\ga, \ordof{E_{\gx'}}
							{\gx' < \gx}}}} {\ga \in d} \in j_{E_\gx}(X).
		\end{align*}
For a set  $d \in [\gee]^{<\gl}$
 let $\Vec{E}(d) = \bigintersect \setof{E_\gx(d}{\gx < \mo(\Es)}$.
It is clear  $E_\gx(d)$ is a $\gk$-complete ultrafilter over $\OB(d)$ and
	$\vE(d)$ is a $\gk$-complete filter over $\OB(d)$.
In addition to this, the filter $\vE(d)$ has a useful normality property
					 with a matching diagonal intersection soon to be introduced.
\begin{claim}
If $S \subseteq \OB(d)$, $\gn^* \in j_{E_\gx}(S)$, and $\gn^* < \mc_\gx(d)$,
	then there is $\gn \in S$ such that
			$\gn^* = j_{E_\gx}(\gn)$.
\end{claim}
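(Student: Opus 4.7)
The plan is to realize $\gn^*$ as the literal $j_{E_\gx}$-image of some $\gn \in \OB(d)$ and then invoke elementarity to conclude $\gn \in S$. The first step is to unpack $\gn^* < \mc_\gx(d)$. Since the generator $\mc_\gx(d)$ satisfies $\dom \mc_\gx(d) = j_{E_\gx}[d]$ and $\mathring{\mc_\gx(d)}(j_{E_\gx}(\gk)) = \gk$, the order definition transferred to $j_{E_\gx}(\OB(d))$ yields $\dom \gn^* \subseteq j_{E_\gx}[d]$, $|\dom \gn^*| < \gk$, and $\mathring{\gn^*(\gb^*)} < \gk$ for each $\gb^* \in \dom \gn^*$.

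The second and crucial step is to argue that each value $\gn^*(\gb^*)$ is fixed by $j_{E_\gx}$. By the preceding bounds, $\gn^*(\gb^*)$ is an extender sequence (in the sense of $M_\gx$) whose first coordinate lies below $\gk$; hence the underlying Mitchell increasing sequence has critical point below $\gk$, so the whole object is coded by an element of $V_\gk^{M_\gx} = V_\gk$ and is therefore fixed by $j_{E_\gx}$ (which has critical point $\gk$). Because $j_{E_\gx}$ is injective and $\dom \gn^* \subseteq j_{E_\gx}[d]$, I can now define $\gn$ by setting $\dom \gn = \setof{\ga \in d}{j_{E_\gx}(\ga) \in \dom \gn^*}$ and $\gn(\ga) = \gn^*(j_{E_\gx}(\ga))$. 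Verification that $\gn \in \OB(d)$ is routine: $\dom \gn \subseteq d$ by construction, $\gk \in \dom \gn$ because $\gn^* \in j_{E_\gx}(\OB(d))$ forces $j_{E_\gx}(\gk) \in \dom \gn^*$, and the monotonicity of $\mathring{\gn}$ follows from the analogous property of $\mathring{\gn^*}$ in $M_\gx$ together with the order-preservation of $j_{E_\gx}$.

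Finally, since $|\dom \gn| < \gk = \crit j_{E_\gx}$, $j_{E_\gx}$ acts pointwise on $\gn$, so $j_{E_\gx}(\dom \gn) = j_{E_\gx}[\dom \gn] = \dom \gn^*$, and for each $\ga \in \dom \gn$, $j_{E_\gx}(\gn)(j_{E_\gx}(\ga)) = j_{E_\gx}(\gn(\ga)) = \gn(\ga) = \gn^*(j_{E_\gx}(\ga))$, the middle equality being the content of the second step. Thus $j_{E_\gx}(\gn) = \gn^*$, and $\gn^* \in j_{E_\gx}(S)$ together with elementarity delivers $\gn \in S$. The main obstacle to verify carefully will be the fixed-point claim: one must make sure that an $M_\gx$-extender sequence with first coordinate below $\gk$ really is a bona fide element of $V_\gk$, which is what makes the hypothesis $\gn^* < \mc_\gx(d)$ do the heavy lifting. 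Everything else is a straightforward pullback-and-elementarity maneuver.
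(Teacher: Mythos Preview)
The paper states this claim without proof, so there is no argument of the paper's to compare against. Your approach---pull the domain and values of $\gn^*$ back along $j_{E_\gx}$ and then invoke elementarity---is the natural one and is correct.

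The only step that needs more than you give it is precisely the one you flag. Knowing that the first coordinate of $\gn^*(\gb^*)$ lies below $\gk$ tells you that the critical point of the attached Mitchell sequence is below $\gk$, but \emph{by itself} this does not force the entire extender sequence into $V_\gk$: one must also bound the length $\mo$ of the sequence and the height of each extender appearing in it. In the paper's setting this is harmless---for $\OB(d)$ to be a \emph{set} on which the ultrafilters $E_\gx(d)$ live at all, the values of $\gn\in\OB(d)$ are implicitly restricted to $V_\gk$ (this is exactly what the seed $\mc_\gx(d)$, whose values lie in $V_{j_{E_\gx}(\gk)}^{M_\gx}$, reflects to), and under that convention your fixed-point argument goes through verbatim. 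It would tighten the write-up to make this implicit range bound explicit rather than trying to deduce it from ``critical point below $\gk$'' alone.
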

Assume $S \subseteq \OB(d)$ and for each $\gn \in S$ there is a set $X(\gn) \subseteq \OB(d)$.
Define the diagonal intersection of the family $\setof{X(\gn)}{\gn \in S}$ as follows:
\begin{align*}
\dintersect_{\gn \in S}X(\gn) = \setof {\gn \in \OB(d)}{\forall \gm \in S\ (\gm < \gn \implies \gn \in X(\gm))}.
\end{align*}
\begin{lemma} \label[lemma]{DiagonalIsBig}
Assume  $S \subseteq \OB(d)$,
				 and for each $\gn \in S$, $X(\gn) \in \vE(d)$.
Then $X^* = \dintersect_{\gn \in S}X(\gn) \in \vE(d)$.
\end{lemma}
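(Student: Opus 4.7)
The plan is to reduce the problem to the individual measures $E_\gx(d)$ and then apply the preceding claim. Since $\vE(d) = \bigintersect_{\gx < \mo(\Es)} E_\gx(d)$, it suffices to fix an arbitrary $\gx < \mo(\vE)$ and show $X^* \in E_\gx(d)$. By the definition of $E_\gx(d)$, this in turn reduces to verifying $\mc_\gx(d) \in j_{E_\gx}(X^*)$, where $\mc_\gx(d) = \setof{\ordered{j_{E_\gx}(\ga), \ordered{\ga, \ordof{E_{\gx'}}{\gx' < \gx}}}}{\ga \in d}$ is the canonical seed of $E_\gx(d)$.

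Next I would compute $j_{E_\gx}(X^*)$ via elementarity. Since the diagonal intersection is defined by a first-order condition in the parameters $S$ and the assignment $\gn \mapsto X(\gn)$, we have
\begin{align*}
j_{E_\gx}(X^*) = \setof{\gn \in \OB(j_{E_\gx}(d))}{\forall \gm \in j_{E_\gx}(S)\ (\gm < \gn \implies \gn \in j_{E_\gx}(X)(\gm))}.
\end{align*}
Substituting $\gn = \mc_\gx(d)$, it remains to show that for every $\gm \in j_{E_\gx}(S)$ with $\gm < \mc_\gx(d)$, one has $\mc_\gx(d) \in j_{E_\gx}(X)(\gm)$.

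This is precisely where the preceding claim enters. Given such a $\gm$, the claim supplies some (unique) $\gn \in S$ with $\gm = j_{E_\gx}(\gn)$, so that $j_{E_\gx}(X)(\gm) = j_{E_\gx}(X)(j_{E_\gx}(\gn)) = j_{E_\gx}(X(\gn))$ by elementarity applied to the functional assignment $\gn \mapsto X(\gn)$. By hypothesis $X(\gn) \in \vE(d) \subseteq E_\gx(d)$, and so by the very definition of $E_\gx(d)$ we have $\mc_\gx(d) \in j_{E_\gx}(X(\gn))$, completing the verification. I do not anticipate a genuine obstacle here: all the combinatorial content has already been isolated into the preceding claim about preimages below $\mc_\gx(d)$, and the only point demanding care is writing the elementary translation of the diagonal intersection so that the hypothesis $\gm < \mc_\gx(d)$ matches exactly the hypothesis of that claim.
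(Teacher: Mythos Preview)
Your proposal is correct and follows essentially the same approach as the paper's own proof: fix $\gx < \mo(\vE)$, unfold the diagonal intersection via elementarity, apply the preceding claim to produce the preimage $\gn \in S$ of any $\gm < \mc_\gx(d)$, and conclude from $X(\gn) \in E_\gx(d)$. Your write-up is simply a bit more explicit about the elementarity computation and the form of $\mc_\gx(d)$.
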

\begin{proof}
We need to show for each $\gx < \mo(\vE)$, $\mc_\gx(d) \in j_{E_\gx}(X^*)$.
I.e., we need to show $\mc_\gx(d) \in j_{E_\gx}(X)(\gn^*)$
		for each $\gn^* \in j_{E_\gx}(S)$ such that
		$\gn^* < \mc_\gx(d)$.
Fix $\gn^* \in j_{E_\gx}(S)$ such that $\gn^* < \mc_\gx(d)$.
There is $\gn \in S$ such that $\gn^* = j_{E_\gx}(\gn)$.
Hence
		$j_{E_\gx}(X)(\gn^*) = j_{E_\gx}(X(\gn))$.
Since $X(\gn) \in E_\gx(d)$ we get $\mc_\gz(d) \in j_{E_\gz}(X(\gn))$,
		by which we are done.
\end{proof}
The diagonal intersection above can be generalized to work with more than one measure in the following way.
A set $T \subseteq \leftexp{n}{\OB(d)}$, where $n<\gw$, is said to be a tree if the following hold:
\begin{enumerate}
\item
	Each $\ordered{\gn_0, \dotsc, \gn_{n-1}} \in T$ is increasing.
\item
		For each $k<n$ and $\ordered{\gn_0, \dotsc, \gn_{n-1}} \in T$ we have
				$\ordered{\gn_0, \dotsc, \gn_k} \in T$.
\end{enumerate}
	Assume $T \subseteq \leftexp{n}{\OB(d)}$ is a tree and $\ordered{\gn} \in T$.
	Set $T_{\ordered{\gn}}= \setof {\ordered{\gm_0, \dotsc, \gm_{n-2}}}
				{\ordered{\gn, \gm_0, \dotsc, \gm_{n-2}} \in T}$.
Denote the $k$-level of the tree $T$ by $\Lev_k(T)$, i.e.,
		$\Lev_k(T) = T \intersect \leftexp{k+1}{\OB(d)}$.
We will use $\gnv$ as a shorthand for $\ordered{\gn_0, \dotsc, \gn_{n-1}}$.
For each $\gnv \in T$ we define the successor level of $\gnv$ in $T$ by setting
	$\Suc_T(\gnv) = \setof {\gm}{\gnv \append \gm \in T}$.

A tree $S\subseteq \leftexp{n}{\OB(d)}$, with all maximal branches having the same finite height $n<\gw$,
		 is said to be an  $\vE(d)$-tree if the following hold:
\begin{enumerate}
	\item
	There is $\gx < \mo(\vE)$ such that
			$\Lev_0(S) \in E_\gx(d)$.
	\item
		For each $\gnv\append \gm \in S$ there is $\gx < \mo(\vE)$ such that
				$\Suc_S(\gnv) \in E_\gx(d)$.
\end{enumerate}
If $S$ is a tree of finite height $n<\gw$ then we write
	$\Lev_{\max} S$ for $\Lev_{n-1}S$.

Assume $S$ is an $\vE(d)$-tree, and for each $\gnv \in \Lev_{\max}(S)$
	there is a set $X(\gnv) \subseteq \OB(d)$.
By recursion define the diagonal intersection of the family
	$\setof {X(\gnv)}{\gnv \in \Lev_{\max}S}$ by setting
	$\dintersect \setof {X(\gnv)}{\gnv \in \Lev_{\max}S} = 
			\dintersect \setof {X^*(\gnv)}{\gnv \in \Lev_{\max}S^*}$,
	where
	   $S^* = S \intersect \leftexp{n-1}{\OB(d)}$
	   and
	   	$X^*(\gmv) = \dintersect \setof {X(\gmv \append \ordered{\gn})}{\gn \in S_{\ordered{\gmv}}}$.
The following is immediate.
\begin{corollary}	
Assume $S$ is an $\vE(d)$-tree, and for each $\gnv \in \Lev_{\max}(S)$
	there is a set $X(\gnv) \in \vE(d)$.
	Then $\dintersect \setof {X(\gnv)}{\gnv \in \Lev_{\max}S} \in \vE(d)$.
\end{corollary}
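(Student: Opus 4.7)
The plan is to proceed by induction on the finite height $n$ of the $\vE(d)$-tree $S$, using \cref{DiagonalIsBig} as the single-level engine at every step.

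The base case $n = 1$ is immediate: here $\Lev_{\max}(S) = \Lev_0(S) \subseteq \OB(d)$, the indexed family is $\setof{X(\gn)}{\gn \in \Lev_0(S)}$ with each $X(\gn) \in \vE(d)$, and \cref{DiagonalIsBig} applied verbatim gives $\dintersect_{\gn \in \Lev_0(S)} X(\gn) \in \vE(d)$, which under the definition unfolds to $\dintersect \setof{X(\gnv)}{\gnv \in \Lev_{\max}(S)}$.

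For the inductive step, I assume the claim for all $\vE(d)$-trees of height $n-1$, let $S$ have height $n$, and set $S^* = S \intersect \leftexp{n-1}{\OB(d)}$. First I verify that $S^*$ is itself an $\vE(d)$-tree of height $n-1$: its root level $\Lev_0(S^*) = \Lev_0(S)$ lies in some $E_\gx(d)$, and for each non-maximal $\gnv \in S^*$ the successor set $\Suc_{S^*}(\gnv)$ coincides with $\Suc_S(\gnv)$ and therefore also lies in some $E_\gx(d)$. Next, for each $\gmv \in \Lev_{\max}(S^*)$, the set $\Suc_S(\gmv) \subseteq \OB(d)$ and each $X(\gmv \append \ordered{\gn}) \in \vE(d)$, so \cref{DiagonalIsBig} applied with $\Suc_S(\gmv)$ playing the role of the indexing set gives
\[
X^*(\gmv) = \dintersect \setof{X(\gmv \append \ordered{\gn})}{\gn \in \Suc_S(\gmv)} \in \vE(d).
\]
Feeding the family $\setof{X^*(\gmv)}{\gmv \in \Lev_{\max}(S^*)}$ into the inductive hypothesis for $S^*$ then yields $\dintersect \setof{X^*(\gmv)}{\gmv \in \Lev_{\max}(S^*)} \in \vE(d)$, which by the recursive definition of the tree diagonal intersection equals $\dintersect \setof{X(\gnv)}{\gnv \in \Lev_{\max}(S)}$, as required.

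The only real care needed is aligning the recursive unfolding with the tree structure: the operation strips one level off the top at each stage, so the induction is forced on the height $n$, with \cref{DiagonalIsBig} doing all of the actual work at each level. Beyond this bookkeeping there is no genuine obstacle, which is consistent with the paper's remark that the corollary is immediate.
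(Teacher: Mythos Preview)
Your proof is correct and follows exactly the approach implicit in the paper: the paper states the corollary is immediate from the recursive definition together with \cref{DiagonalIsBig}, and your induction on the height of $S$ is precisely the unwinding of that recursion, applying \cref{DiagonalIsBig} once per level. There is nothing to add.
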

\section{The Forcing Notion} \label{sec:Forcing}
A finite sequence $\ordered{\gns_0, \dotsc, \gns_k} \in \leftexp{<\gw}{\ES}$ is said to be $\mo$-decreasing if it is increasing and $\ordered{\mo(\gns_0), \dotsc, \mo(\gns_k})$
	is non-increasing.
\begin{definition}
	A condition $f$ is in the forcing notion $\PP^*_f(\vE, \gee)$ if 
	$f$ is a function  $f \func d \to \leftexp{<\gw}{\ES}$ 
	such that:
	\begin{enumerate}
	\item
		$d \in [\gee]^{<\gl}$.
	\item
		$d \supseteq (\power{d}+1)$.
	\item
		For each $\ga \in d$,
			$f(\ga)$ is $\mo$-decreasing.
	\end{enumerate}
	Assume $f, g \in \PP_f^*(\vE, \gee)$ are conditions.
	We say $f$ is an  extension of $g$ ($f \leq^*_{\PP_f^*(\vE,\gee)} g$) if $f \supseteq g$.
\end{definition}
For a condition  $f \in \PP_f^*(\Vec{E})$ we will write $E_\gx(f)$ and $\vE(f)$ instead of 
					$E_\gx(\dom f)$ and $\Vec{E}(\dom f)$, respectively.
If $T \subseteq \OB(e)$ and $d \subseteq e$  then 
						$T \restricted d = \setof {\gn\restricted d}{\gn \in T}$.
\begin{definition}
	A condition $p$ is in the forcing notion $\PP^*(\Vec{E}, \gee)$ if 
		$p$ is of the form $\ordered{f^p, T^p}$, where	
						$f^p \in \PP^*_f(\Vec{E}, \gee)$,
					$T^p  \in \vE(f^p)$, and
		for each $\gn \in T^p$ 	and each $\ga \in \dom \gn$,
					$\max \mathring{f}^p(\ga) < \ogn(\gk)$.

Assume $p, q \in \PP^*(\Vec{E}, \gee)$ are conditions.
We say $p$ is a direct extension of $q$ ($p \leq^*_{\PP^*(\Vec{E}, \gee)} q$) if 
	$f^p \supseteq f^q$ and
	$T^p \restricted \dom f^q \subseteq T^q$.
We say $p$ is a strong direct extension of $q$ ($p \leq^{**}_{\PP^*(\Vec{E}, \gee)} q$) if
		$p$ is a direct extension of $q$ and $f^p =f^q$.
\end{definition}
Since $\gee$ and the sequence $\vE$ are fixed througout this work we designate
	$\PP^*(\Vec{E}, \gee)$ by  $\PP^*$.
\begin{definition}
A condition $p$ is in the forcing $\Bar{\PP}$ if 
	$p = \ordered{p_0, \dotsc, p_{n^p-1}}$, where $n^p < \gw$,
	 there is a  sequence $\ordof {\Vec{E}^p_i}{i < n^p}$
		such that each $\Vec{E}^p_i$ is a Mitchell increasing sequence of extenders,
			$\ordered{\crit(\Vec{E}^p_0), \dotsc, \crit(\Vec{E}^p_{n^p-1})}$ is strictly increasing,
				$\sup \setof {j_{E^p_{i,\gx}}(\crit \Vec{E}^p_i)} {\gx < \mo(\Vec{E}^p_i)} < \crit \Vec{E}^p_{i+1}$,
			$\gl(\Vec{E}^p_i) < \crit(\Vec{E}^p_{i+1})$,
			and for each $i < n^p$,	
						$p_i \in \PP^*(\Vec{E}^p_i, \gee^p_i)$.

Assume $p, q \in \Bar{\PP}$ are conditions.
We say $p$ is a direct extension of $q$ ($p \leq^*_{\Bar{\PP}} q$) if 
	$n^p = n^q$ and for each $i < n^p$, $p^i \leq^* q^i$.
We say $p$ is a strong direct extension of $q$ ($p \leq^{**}_{\Bar{\PP}} q$) if 
	$n^p = n^q$ and for each $i < n^p$, $p^i \leq^{**} q^i$.
\end{definition}
The following sequence of definitions leads to the definition of the order $\leq_{\Bar{\PP}}$
	(which is somewhat involved, hence the breakup to several steps).
If $\gn \in \OB(d)$ we let $\mo(\gn) = \mo(\gn(\gk))$.
\begin{definition}
Assume $f \func d \to \leftexp{<\gw}{\ES}$ is a function, $\gn \in \OB(d)$, and for each
		$\ga \in \dom \gn$, $\max \mathring{f}(\ga) < \ogn(\gk)$.
Define $f_{\ordered{\gn}\downarrow}$  and $f_{\ordered{\gn}\uparrow}$ as follows.
\begin{enumerate}
\item
		If $\mo(\gn)=0$ then $f_{\ordered{\gn}\downarrow} = \emptyset$.
		If $\mo(\gn) > 0$ then  $f_{\ordered{\gn}\downarrow}$ is the function $g$, where:
\begin{enumerate}
\item
	$\dom g = \ran \ogn$.
\item
	For each $\ga \in \dom \gn$,
		 $g(\ogn(\ga)) = \ordered{\gts_{k+1}, \dotsc, \gts_{n-1}}$,
		where $f(\ga) = \ordered{\gts_0, \dotsc, \gts_{n-1}}$ and
				$k<n$ is maximal such that
						$\mo(\gts_k) \geq \mo(\gn(\ga))$.
	Set $k = -1$ if there is no $k < n$ such that 			$\mo(\gts_k) \geq \mo(\gn(\ga))$.
\end{enumerate}
\item
	Define $f_{\ordered{\gn}\uparrow}$ to be the function $g$ where:
\begin{enumerate}
\item
	$\dom g = \dom f$.
\item
	For each $\ga \in \dom \gn$,
		 $g(\ga)= \ordered{\gts_0, \dotsc, \gts_{k}}$,
		where $f(\ga) = \ordered{\gts_0, \dotsc, \gts_{n-1}}$ and
				$k<n$ is maximal such that
						$\mo(\gts_k) \geq \mo(\gn(\ga))$.
	Set $k = -1$ if there is no $k < n$ such that 			$\mo(\gts_k) \geq \mo(\gn(\ga))$.
\end{enumerate}
\end{enumerate}
\end{definition}
\begin{definition}
The following definitions show how to reflect down a function $\gm  \in \OB(d)$
	using a larger function $\gn \in \OB(d)$.
\begin{enumerate}
\item
	Assume $\gm, \gn \in \OB(d)$, $\gm < \gn$, and $\mo(\gm) < \min(\mo(\gn), \ogn(\gk))$.
	Define the function $\gt = \gm \downarrow \gn \in \OB(\ran \ogn)$ by:
	\begin{enumerate}
	\item
		$\dom \gt = \setof{\ogn(\ga)}{\ga \in \dom \gm \intersect \dom \gn}$.
	\item
		For each $\gx \in \dom \gt$,
			$\gt(\gx) = \gm(\ga)$, were $\gx = \ogn(\ga)$.
	\end{enumerate}
\item
	Assume $T \subseteq \OB(d)$ and $\gn \in \OB(d)$.
	If $\mo(\gn) = 0$ then set $T_{\ordered{\gn}\downarrow} = \emptyset$.
	If $\mo(\gn) > 0$ then
			$T_{\ordered{\gn}\downarrow}= \setof {\gm \downarrow \gn} 
															{\gm \in T,\ \gm < \gn,\ \mo(\gm) < \min(\mo(\gn), \ogn(\gk))}$.
\end{enumerate}
\end{definition}
\begin{definition}
Assume $p \in \PP^*(\vE)$ and $\gn \in T^p$.
We define $p_{\ordered{\gn}\downarrow}$ as follows.
If $\mo(\gn) = 0$ then $p_{\ordered{\gn}\downarrow} = \emptyset$.
If $\mo(\gn) > 0$ then  
	 $p_{\ordered{\gn}\downarrow}$   is the condition $q \in \PP^*(\vbn)$ defined by setting
								 	$f^q = f^p_{\ordered{\gn}\downarrow}$  and
										$T^q = T^p_{\ordered{\gn}\downarrow}$.
Define $p_{\ordered{\gn}\uparrow}$ to be the condition $q \in \PP^*(\vE)$, where
					$f^q = f^p_{\ordered{\gn}\uparrow}$ and
						$T^q = T^p_{\ordered{\gn}}$.
Finally set 
		$p_{\ordered{\gn}} = \ordered{p_{\ordered{\gn}\downarrow}, p_{\ordered{\gn}\uparrow}}$.
\end{definition}
Of course for the above definition to make sense $T^p_{\ordered{\gn}\downarrow} \in \vbn(\ran \gn)$ should hold,
	which we prove in \cref{MakesSense}.

If $T \subseteq \OB(d)$ hen we let
	$\bT = \setof {\ordered{\gn_0, \dotsc, \gn_n}}
									{n<\gw,\ \gn_0, \dotsc, \gn_n \in T,\ \gn_0 < \dotsb < \gn_n}$.

\begin{definition} \label[definition]{RealOrder}
Assume $p, q \in \Bar{\PP}$.
We say $p$ is an extension of $q$ ($p \leq_{\Bar{\PP}} q$) if 
	 the following hold:
\begin{enumerate}
\item
	$n^p \geq n^q$.
\item
		$\setof{\Vec{E}^q_j}{j < n^q} \subseteq \setof {\Vec{E}^p_i}{i < n^p}$ and $\vE^q_{n^q-1} = \vE^p_{n^p-1}$.
\item
	For each $i < n^q$
		there is $\ordered{\gn_0, \dotsc, \gn_{k-1}} \in \bT^{q_i}$ such that
		$\ordered{p_{j_0+1}, \dotsc, p_{j_1}} \leq^* q_{i\ordered{\gn_0, \dotsc, \gn_{k-1}}}$,
			where $i$, $j_0$ and $j_1$, are being set as follows.
	Let $j_1 < n^p$ satisfy $\Vec{E}^p_{j_1} = \Vec{E}^q_{i}$.
	If $i = 0$ then set $j_0 = -1$.
	If $i > 0$ then let $j_0 < j_1$ satisfy  $\Vec{E}^p_{j_0}  = \Vec{E}^q_{i-1}$.
\end{enumerate}
\end{definition}
Finally we give the definition of the forcing notion we are going to work with:
\begin{definition}
$\PP(\Vec{E}, \gee) = \setof {q \leq_{\Bar{\PP}} p}{p \in \PP^*(\Vec{E}, \gee)}$.
The partial orders $\leq_{\PP(\Vec{E}, \gee )}$ and $\leq^*_{\PP(\Vec{E}, \gee)} $ are inherited from
	$\leq_{\Bar{\PP}}$  and $\leq^*_{\Bar{\PP}}$.
\end{definition}
Since $\gee$ and the sequence $\vE$ are fixed throughout this work we
		 will write $\PP$
	instead of $\PP(\Vec{E}, \gee)$ throughout this paper.

\Cref{MakesSense} is needed in order to show the forcing notion defined above makes sense.
\begin{claim} \label[claim]{MakesSense}
If $T \in \vE(d)$ then $X = \setof{\gn \in T}{T_{\ordered{\gn}\downarrow} \in \vbn(\ran\ogn)} \in \vE(d)$.
\end{claim}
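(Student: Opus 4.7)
The plan is to reduce $X \in \vE(d)$ to verifying $X \in E_\gx(d)$ separately for each $\gx < \mo(\vE)$, which by definition is $\mc_\gx(d) \in j_{E_\gx}(X)$, where $\mc_\gx(d)$ is the canonical representing function whose value at $j_{E_\gx}(\ga)$ is $\ordered{\ga, \ordof{E_{\gx'}}{\gx' < \gx}}$. Unfolding the defining conjunction for $X$ inside $M_\gx$, this membership splits into $\mc_\gx(d) \in j_{E_\gx}(T)$ (automatic from $T \in E_\gx(d)$) and
\begin{equation*}
j_{E_\gx}(T)_{\ordered{\mc_\gx(d)}\downarrow} \in \ordof{E_{\gx'}}{\gx' < \gx}^{M_\gx}(\ran \mathring{\mc_\gx(d)}),
\end{equation*}
where the filter is computed inside $M_\gx$ (where the sequence lives, by Mitchell-increasingness).

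The heart of the argument is the displayed membership. Unpacking the filter, for each $\gx' < \gx$ it suffices to show $\mc_{\gx'}^{M_\gx}(\ran \mathring{\mc_\gx(d)}) \in j^{M_\gx}_{E_{\gx'}}(j_{E_\gx}(T)_{\ordered{\mc_\gx(d)}\downarrow})$. The key identification, verified directly from the definitions of the reflection operator and the model points, is
\begin{equation*}
\mc_{\gx'}(d) \downarrow \mc_\gx(d) = \mc_{\gx'}^{M_\gx}(\ran \mathring{\mc_\gx(d)}).
\end{equation*}
Granted this, starting from $T \in E_{\gx'}(d)$ (which gives $\mc_{\gx'}(d) \in j_{E_{\gx'}}(T)$) and applying $j_{E_\gx}$, Mitchell coherence lets one identify $j_{E_\gx}(E_{\gx'})$ with $E_{\gx'}^{M_\gx}$ on the relevant sets, producing exactly the required membership in the reflected tree.

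The main obstacle I anticipate is the bookkeeping around the threshold inequalities $\mo(\mc_{\gx'}(d)) < \min(\mo(\mc_\gx(d)), \mathring{\mc_\gx(d)}(j_{E_\gx}(\gk)))$ required to make the reflection $\mc_{\gx'}(d) \downarrow \mc_\gx(d)$ well-defined, and checking that Mitchell coherence identifies the extender images as claimed. Once these are cleaned up, the claim follows by a standard ultrapower computation.
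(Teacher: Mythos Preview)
Your overall framework is correct and matches the paper: reduce to each $\gx < \mo(\vE)$, verify $\mc_\gx(d) \in j_{E_\gx}(X)$ by splitting into the membership $\mc_\gx(d) \in j_{E_\gx}(T)$ and the filter condition on the reflected tree. Where you diverge from the paper is in how you handle the second conjunct, and here there is a genuine gap. Your ``key identification'' $\mc_{\gx'}(d) \downarrow \mc_\gx(d) = \mc_{\gx'}^{M_\gx}(\ran \mathring{\mc_\gx(d)})$ does not type-check: $\mc_{\gx'}(d)$ lives in $M_{\gx'}$ with domain $j_{E_{\gx'}}''d$, while $\mc_\gx(d)$ lives in $M_\gx$ with domain $j_{E_\gx}''d$, so there is no common $\OB$ in which the operation $\downarrow$ applies to this pair. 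Your follow-up, applying $j_{E_\gx}$ and invoking Mitchell coherence to identify $j_{E_\gx}(E_{\gx'})$ with $E_{\gx'}$, is also not justified: Mitchell-increasingness gives $\vE\restricted\gx \in M_\gx$, but not that $j_{E_\gx}$ fixes $E_{\gx'}$, so the identification needs real work you have not supplied.

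The paper avoids all of this by computing the reflected set explicitly. Every $\gm \in j_{E_\gx}(T)$ with $\gm < \mc_\gx(d)$ is of the form $j_{E_\gx}(\gm^*)$ for some $\gm^* \in T$ (this is the earlier claim in the paper about points below the model point), and one checks directly that $j_{E_\gx}(\gm^*) \downarrow \mc_\gx(d) = \gm^*$. Hence $j_{E_\gx}(T)_{\ordered{\mc_\gx(d)}\downarrow} = \setof{\gm \in T}{\mo(\gm) < \gx}$, a set in $V$. Membership of this set in $\vE\restricted\gx(d)$ is then immediate: for each $\gx' < \gx$ we have $T \in E_{\gx'}(d)$ and $\mo(\mc_{\gx'}(d)) = \gx' < \gx$. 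No iterated ultrapowers, no coherence bookkeeping --- one direct computation identifies the reflected tree with an explicit subset of $T$, and the filter check is one line.
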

\begin{proof}
We need to show 	$X \in \vE(d)$.
I.e., we need to show for each $\gx < \mo(\Vec{E})$,
							$X \in E_\gx(d)$.
Fix $\gx < \mo(\Vec{E})$.
We need to show
								$\mc_\gx(d_\gx) \in
											 j_{E_\gx}(X)$.
Hence it is enough showing
			$\mc_\gx(d) \in j_{E_\gx}(T)$ and
 			$ j_{E_\gx}(T)_{\ordered{\mc_\gx(d)}\downarrow}  \in \vE\restricted\gx(d)$.
Since $T \in \Vec{E}(d)$ we have $\mc_\gx(d) \in j_{E_\gx}(T)$.
So we are left with showing
					$j_{E_\gx}(T)_{\ordered{\mc_\gx(d)}\downarrow} \in \vE\restricted\gx(d)$.
From the definition of the operation $\downarrow$ we get
\begin{align*}
 j_{E_\gx}(T)_{\ordered{\mc_\gx(d)}\downarrow}= \begin{aligned}[t]
				\setof{\gm \downarrow \mc_\gx(d)}
									{\gm \in j_{E_\gx}(T), \ \gm < \mc_\gx(d),\ 
															\mo(\gm) < \min(\gk,\gx)}.
				\end{aligned}
\end{align*}
Consider $\gm \in j_{E_\gx}(T)$ such that $\gm < \mc_\gx(d)$.
There is $\gm^* \in T$ such that $\gm = j_{E_\gx}(\gm^*)$.
	Since for each $\gm^* \in T$  such that $\mo(\gm^*) < \gx$ we have
	$j_{E_\gx}(\gm^*) \downarrow \mc_\gx(d) = \gm^*$,
	 we get
	$j_{E_\gx}(T)_{\ordered{\mc_\gx(d)}\downarrow} =
				\setof{\gm \in T}{\mo(\gm) < \gx} \in \vE\restricted \gx(d)$.
\end{proof}

For each condition $p \in \PP$ 
	let $\PP/p = \setof {q \in \PP}{q \leq p}$.
It is immediate from the definitions above that for each $0<i<n^p-1$ the forcing notion $\PP/p$
	factors to $P_0 \times P_1$, where
		 $P_0 = \setof {q^0 \leq p^0}{q^0 \append p^1 \in \PP}$,
			$P_1 = \setof {q^1 \leq p^1}{p^0 \append q^1 \in \PP}$,
			$p^0 = \ordered{p_0, \dotsc, p_{i-1}}$, and
					 $p^1 = \ordered{p_i, \dotsc, p_{n^p-1}}$.
Together with the Prikry property (\cref{PrikryProperty}) and the closure of the direct order, 
	one can analyze the cardinal structure in $V^{\PP}$ straightforwardly.

If $e\supseteq d$ we define  $\gp^{-1}_{e,d}$ to be the inverse of
	the operation $\restricted d$, i.e., for each  $X \subseteq \OB(d)$ we let
	$\gp_{e,d}^{-1}(X) = \setof {\gn \in \OB(e)}{\gn \restricted d \in X}$.
	If $f, g \in \PP^*_f$ are conditions then we write
		$\gp_{f,g}^{-1}$ for $\gp_{\dom f, \dom g}^{-1}$.

We end this section with the analysis of the cardinal structure above
	$\gk$ in the generic extension:
	The cardinals between $\gk$ and $\gl$ are collapsed,
		and $\gl$ and the cardinals above it are preserved.
The properties of cardinals up to $\gk$  will be dealt with in later sections.
\begin{claim}
$\PP$ satisfies the $\gl^{+}$-cc.
\end{claim}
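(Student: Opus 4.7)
The plan follows the standard $\Delta$-system template for extender-based Prikry-type forcings. Given an alleged antichain $\ordof{q^\ga}{\ga < \gl^+}$ in $\PP$, I will thin it to a family of size $\gl^+$ with highly uniform structure and then directly combine two of them.

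First, reduce to the top coordinate. By pigeonhole on the finite data $n^{q^\ga}$, on the sequences $\ordof{\vE^{q^\ga}_i}{i<n}$ (each $\vE^{q^\ga}_i$ lies in $V_\gk$, giving at most $\gk$ choices per coordinate), and on the associated $\gee$-values, I may assume all of these are independent of $\ga$; the top slot is already pinned at $\vE$ and $\gee$. For each $i<n-1$ the forcing $\PP^*(\vE^*_i,\gee^*_i)$ has cardinality at most $\gk$ by GCH and inaccessibility of $\gk$ (since $\gee^*_i,\gl(\vE^*_i)<\crit\vE^*_{i+1}\leq\gk$), so one more pigeonhole fixes the lower tuple $\ordered{q^\ga_0,\dotsc,q^\ga_{n-2}}$. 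Writing $p^\ga=q^\ga_{n-1}\in\PP^*(\vE,\gee)$, the problem reduces to finding two compatible $p^\ga$'s.

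Next, apply the $\Delta$-system lemma to $\setof{\dom f^{p^\ga}}{\ga<\gl^+}\subseteq[\gee]^{<\gl}$. Regularity of $\gl$, GCH, and the hypothesis $\gm^{<\gk}<\gl$ for $\gm<\gl$ together give $\gl^{<\gl}=\gl$, so the lemma yields a subfamily of size $\gl^+$ whose domains form a $\Delta$-system with root $d^*\in[\gee]^{<\gl}$. Since $|\leftexp{<\gw}{\ES}|^{|d^*|}\leq\gl$ under the same arithmetic, one more pigeonhole extracts $\gl^+$ indices on which $f^{p^\ga}\restricted d^*=f^*$ is common. Fix two such indices to obtain conditions $p$ and $q$.

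To combine them, set $d^{pq}=\dom f^p\union\dom f^q\in[\gee]^{<\gl}$ (the requirement $|d^{pq}|+1\subseteq d^{pq}$ is automatic from the same for $\dom f^p$ and $\dom f^q$), $f^{pq}=f^p\union f^q$ (well-defined because $f^p,f^q$ agree on $d^*$), and
\[
T^{pq}=\gp^{-1}_{d^{pq},\dom f^p}(T^p)\intersect\gp^{-1}_{d^{pq},\dom f^q}(T^q).
\]
Since $\mc_\gx(d^{pq})\restricted\dom f^p=\mc_\gx(\dom f^p)$, the seed witnesses that $\gp^{-1}$ preserves $E_\gx$-measure-one sets, giving $T^{pq}\in\vE(d^{pq})$. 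The max condition is automatic: any $\gn\in T^{pq}$ has $\gn\restricted\dom f^p\in T^p$ and $\gn\restricted\dom f^q\in T^q$, so for $\ga\in\dom\gn$ the value $\max\mathring{f}^{pq}(\ga)$, being either $\max\mathring{f}^p(\ga)$ or $\max\mathring{f}^q(\ga)$, is bounded by $\ogn(\gk)$. Thus $\ordered{f^{pq},T^{pq}}\in\PP^*(\vE,\gee)$ is a common direct extension of $p$ and $q$, and prepending the common lower tuple yields a common extension of $q^\ga,q^\gb$ in $\PP$. The main obstacle is purely the cardinal-arithmetic accounting---verifying both that the lower forcings have cardinality $\leq\gk$ so as to be absorbed in a pigeonhole over $\gl^+$ conditions, and that $\gl^{<\gl}=\gl$ so the $\Delta$-system lemma produces a system of size $\gl^+$; the combining step itself is routine once the behaviour of the seeds $\mc_\gx$ under restriction is in hand.
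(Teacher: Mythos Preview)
Your proof is correct and follows the paper's approach essentially step for step: pigeonhole to stabilize the lower blocks and reduce to the top coordinate in $\PP^*(\vE,\gee)$, apply the $\Delta$-system lemma to the domains of the $f$-parts, stabilize $f$ on the kernel, and then amalgamate two conditions by taking $f = f^p \cup f^q$ and $T = \gp^{-1}_{f,f^p}(T^p) \cap \gp^{-1}_{f,f^q}(T^q)$. The paper's proof is terser (it simply asserts the ``without loss of generality'' reductions and the final amalgamation), whereas you spell out the cardinal arithmetic behind each pigeonhole and verify explicitly that the amalgam is a condition and a common direct extension; but there is no substantive difference in strategy.
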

\begin{proof}
Begin with a family of conditions $\ordof {p^\gx}{\gx < \gl^{+}}$.
Without loss of generality we can assume $n^{p^{\gx_0}} = n^{p^{\gx_1}}$ for each $\gx_0, \gx_1 < \gl^{+}$.
Without loss of generality we can assume $\ordered{p^{\gx_0}_0, \dotsc, p^{\gx_0}_{n^{p^{\gx_0}}-2}} = \ordered{p^{\gx_1}_0, \dotsc, p^{\gx_1}_{n^{p^{\gx_1}}-2}}$ for each $\gx_0, \gx_1 < \gl^{+}$.
Thus, without loss of generality, we can assume $n^{p^{\gx}} = 1$  for each $\gx < \gl^{+}$.
By the $\gD$-system lemma we can assume $\setof {\dom f^{p^{\gx}}}{\gx < \gl^+}$ is a $\gD$-system 
	with kernel $d$.
	Since $\power{d} < \gl$ we can assume that for each $\gx_0, \gx_1 < \gl^+$ and $\ga \in d$,
		$f^{p^{\gx_0}}(\ga) = f^{p^{\gx_1}}(\ga)$.
		Fix $\gx_0 < \gx_1 < \gl^+$.
		Set $f = f^{p^{\gx_0}}\union f^{p^{\gx_1}}$,
			$T = \gp^{-1}_{f,f^{p^{\gx_0}}} T^{p^{\gx_0}}\intersect \gp^{-1}_{f,f^{p^{\gx_1}}}T^{p^{\gx_1}}$,
				and let $p = \ordered{f, T}$.
Then $p \leq p^{\gx_0}, p^{\gx_1}$.
\end{proof}
\begin{claim}
$\forces \formula {\text{There are no cardinals between $\gk$ and $\gl$}}$.
\end{claim}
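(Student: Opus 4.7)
The plan is to show every $\gm \in (\gk, \gl)$ is collapsed to $\gk$ in $V[G]$, by producing in $V[G]$ a surjection of $\gk$ onto a set containing $\gm$ from the top-level generic Magidor-Radin sequence. First, by a direct-extension argument, the set of conditions $p$ with $\gm \subseteq \dom f^{p_{n^p-1}}$ is dense: given any $p$ one enlarges the top-coordinate domain to include $\gm$ (assigning the empty sequence to the new ordinals) and pulls the tree back via $\gp^{-1}$. Fix such a $p \in G$.

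Next I would analyze the top generic Magidor sequence $\gG \in V[G]$, consisting of those $\gn$ appearing in some $\ordered{\gn_0, \dotsc, \gn_{k-1}} \in \bT^{q_{n^q-1}}$ witnessing an extension $q \leq p$ with $q \in G$, in the sense of \cref{RealOrder}. Two facts are needed. First, $\power{\gG}^{V[G]} \leq \gk$: any two elements of $\gG$ are $<$-comparable in $\OB$, and the order forces distinct values of $\ogn(\gk)$, so the map $\gn \mapsto \ogn(\gk)$ is an order-preserving injection of $\gG$ into $\gk$. Second, each $\gn \in \gG$ has $\power{\dom \gn} < \gk$: by density $\gG$ has no maximum (for every $\gb < \gk$, $\setof{\gn}{\ogn(\gk) > \gb}$ is measure-one in each $E_\gx(d)$, so one can always extend by some $\gn$ with $\ogn(\gk) > \gb$), hence every $\gn \in \gG$ has a successor $\gn' \in \gG$ with $\power{\gn} < \ogn'(\gk) < \gk$.

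The main combinatorial step is the covering $\gm \subseteq \bigcup_{\gn \in \gG} \dom \gn$. For each $\ga \in \dom f^{p_{n^p-1}}$ and each $\gx < \mo(\vE)$, the set $\setof{\gn \in \OB(\dom f^{p_{n^p-1}})}{\ga \in \dom \gn}$ lies in $E_\gx(\dom f^{p_{n^p-1}})$, since $j_{E_\gx}(\ga) \in j_{E_\gx}[\dom f^{p_{n^p-1}}]$ lies in the domain of $\mc_\gx(\dom f^{p_{n^p-1}})$. Hence for any $q \leq p$ in $G$ one may first shrink $T^{q_{n^q-1}}$ by a direct extension to concentrate on $\gn$'s with $\ga \in \dom \gn$ and then adjoin such a $\gn$ to the Magidor sequence; by genericity some $\gn \in \gG$ has $\ga \in \dom \gn$. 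Combining, $\gm \subseteq \dom f^{p_{n^p-1}} \subseteq \bigcup_{\gn \in \gG} \dom \gn$, a union of $\leq \gk$ sets each of cardinality $< \gk$, whence $\power{\gm}^{V[G]} \leq \gk < \gm$ and $\gm$ is not a cardinal in $V[G]$.

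The main obstacle is the combinatorial handling of $\gG$ as a $V[G]$-object and justifying the density claims, which require a careful interplay of the direct extension $\leq^*$ (for shrinking trees and enlarging domains) and the full extension $\leq$ (for adjoining Magidor points); these ultimately rest on the dense-set analysis of section \ref{sec:Dense} and on the Prikry property.
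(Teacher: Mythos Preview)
Your approach matches the paper's: enlarge the top block's domain to contain the target ordinal, read off the generic Magidor sequence at that block, and observe that the domains of its members cover that ordinal while the sequence has size $\le\gk$ and each member's domain has size $<\gk$. The paper's version works with $V$-regular $\gt\in(\gk,\gl)$, sets $C=\{\gnv\in\leftexp{<\gw}{T^{p_{n^p-1}}}:p_{\ordered{\gnv}}\in G\}$, and concludes only $\cf\gt\le\gk$, but the covering you describe is implicit in the same argument and yields your $|\gm|\le\gk$ directly. One small slip: by \cref{RealOrder} the sequences witnessing $q\le p$ live in $\bT^{p_{n^p-1}}$, not $\bT^{q_{n^q-1}}$; this is also why your $\gG$ (like the paper's $C$) sits inside a single $\OB(\dom f^{p_{n^p-1}})$, which is what makes the comparability argument you sketch coherent.

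Your final paragraph, however, misidentifies the dependencies. Nothing here rests on section~\ref{sec:Dense} or on the Prikry property; in the paper this claim appears in section~\ref{sec:Forcing}, before any of that machinery is built. The two density facts you need are entirely elementary: enlarging $\dom f^{p_{n^p-1}}$ to swallow $\gm$ is a plain direct extension (as you yourself describe), and forcing some $\gn$ with $\ga\in\dom\gn$ into the sequence is just the one-step non-direct extension $q\mapsto q_{\ordered{\gn}}$, available because $\{\gn:\ga\in\dom\gn\}\in\vE(f^{q_{n^q-1}})$ whenever $\ga\in\dom f^{q_{n^q-1}}$ (since $j_{E_\gx}(\ga)\in\dom\mc_\gx$ for every $\gx$). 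There is no ``careful interplay'' beyond these two basic moves, and certainly no appeal to the Prikry property or to the structural lemmas of section~\ref{sec:Dense}.
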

\begin{proof}
Fix  a $V$-regular cardinal $\gt \in (\gk, \gl)$.
Fix a condition $p \in \PP$ such that 
	$\dom f^{p_{n^p-1}} \supseteq \gt \setminus \gk$ will hold.
Let $G \subseteq \PP$ be generic such that $p \in G$.
Set $C = \setof {\gnv \in \leftexp{<\gw}{T^{p_{n^p-1}}}}
							{p_{\ordered{\gnv}} \in G}$.
Then $\sup \setof {\sup(\gt \intersect  \bigunion \dom \gnv) }{\gnv \in C} = \gt$.
Since $\forces \formula{\power{C} \leq \gk}$ we get $p \forces \formula{ \cf \gt \leq \gk}$.
\end{proof}
Preservation of $\gl$ will be proved by a properness type argument (\cref{WeAreProper})
	for which we need some preparation.

We say the elementary substructure $N \subelem H_\gc$,
		where $\gc$ is large enough,
		 is $\gk$-internally approachable
	if there is an increasing continuous sequence of elementary substructures
		$\ordof{N_\gx}{\gx<\gk}$ such that 
			$N = \bigunion \setof{N_\gx}{\gx<\gk}$,
	for each $\gx < \gk$,
			$N_\gx \subelem H_\gc$,
				$\power{N_\gx} < \gl$,
					$N_{\gx+1} \supseteq \PSet_{\gk}(\power{N_{\gx}})$,
					$N_\gx \intersect \gl \in \On$,
						$\PP^*_f \in N_\gx$,
				$N_{\gx+1} \supseteq \leftexp{<\gk}{N_{\gx+1}}$, and
					$\ordof{N_{\gx'}}{\gx'<\gx} \in N_{\gx+1}$.

We say the pair $\ordered{N, f}$ is a good pair
	if $N \subelem H_\gc$ is a $\gk$-internally approachable elementary substructure
		and there is a sequence
					 $\ordof{\ordered{N_\gx, f_\gx}}{\gx<\gk}$ such that
				$\ordof {N_\gx}{\gx < \gk}$ witnesses the $\gk$-internal approachablity
						of $N$,
							$f = \bigunion \setof {f_\gx}{\gx < \gk}$,
							 $\ordof{f_\gx}{\gx<\gk}$ is a $\leq^*$-decreasing
								continuous sequence in $\PP^*_f$,
									 and for each $\gx < \gk$,
							$f_{\gx} \in \bigintersect
									\setof {D \in N_\gx}{D \text{ is a dense open subset of }
																\PP^*_f}$,
												$f_\gx \subseteq N_{\gx+1}$,
											 and
												$f_\gx \in N_{\gx+1}$.

Note that if $N \subelem H_\gc$ is an elementary substructure such that
	$\power{N} < \gl$, 
		$N \supseteq \leftexp{<\gk}{N}$, 
			$\PP_f^* \in N$, 
		$f \in \bigintersect \setof {D \in N}
								{D \text{ is a dense open subset of }\PP_f^*}$,
			$f \subseteq N$, and 
				$\ordered{\gn_0, \dotsc, \gn_{k-1}} \in N\intersect \OB(\dom f)$, then
				$f_{\ordered{\gn_0, \dotsc, \gn_{k-1}} } \in 
								\bigintersect \setof {D \in N}
												{D \text{ is a dense open subset of }\PP_f^*}$.
			
Hence if $\ordered{N, f}$ is a good pair and
	$\ordered{\gn_0, \dotsc, \gn_{k-1}} \in N \intersect \OB(\dom f)$, then
			$\ordered{N, f_{\ordered{\gn_0, \dotsc, \gn_{k-1}}}}$
				is a good pair also.

The following is immediate.
\begin{claim} \label[lemma]{GetGoodPair}
For each set $X$ and $f \in \PP^*_f$ there is 
	 a good pair$\ordered{N, f^*}$ 
	such that $ f^* \leq^* f$ and 
				$X,f \in N$.
\end{claim}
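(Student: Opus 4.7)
The plan is to construct the desired good pair by a straightforward transfinite recursion of length $\gk$. In a zigzag fashion we build a continuous $\subelem$-chain of elementary substructures $\ordof{N_\gx}{\gx < \gk}$ of $H_\gc$ (for some sufficiently large $\gc$) together with a $\leq^*$-decreasing continuous sequence $\ordof{f_\gx}{\gx < \gk}$ of conditions in $\PP^*_f$, and then set $N = \bigunion_{\gx<\gk} N_\gx$ and $f^* = \bigunion_{\gx<\gk} f_\gx$. Each clause in the definition of good pair is arranged as a defining property of one of the two chains at the appropriate stage, and $f^* \leq^* f$ will hold because $f_0$ is chosen below $f$ and the $f_\gx$'s form a $\leq^*$-decreasing chain.

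Fix $\gc$ so large that $X$, $f$, $\PP^*_f$, $\vE$, $\gk$, $\gl$, and $\gee$ all belong to $H_\gc$. At stage $0$ pick $N_0 \subelem H_\gc$ of cardinality $<\gl$ containing all the listed parameters, with $N_0 \intersect \gl \in \On$, and then pick $f_0 \leq^* f$ meeting every dense open subset of $\PP^*_f$ lying in $N_0$. At a successor stage $\gx+1$, having $(N_\gx, f_\gx)$, first pick $N_{\gx+1} \subelem H_\gc$ of cardinality $<\gl$ containing $N_\gx$, $f_\gx$, $\ordof{N_{\gx'}}{\gx' < \gx}$, and $\PSet_\gk(\power{N_\gx})$, closed under its own $<\gk$-sequences, and with $N_{\gx+1} \intersect \gl \in \On$; then pick $f_{\gx+1} \leq^* f_\gx$ meeting every dense open subset of $\PP^*_f$ in $N_{\gx+1}$. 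At a limit stage $\gx$ set $N_\gx = \bigunion_{\gx'<\gx} N_{\gx'}$ and $f_\gx = \bigunion_{\gx'<\gx} f_{\gx'}$; any dense open subset of $\PP^*_f$ in $N_\gx$ lies in some $N_{\gx'}$ and is already met by $f_{\gx'}$, while $f_\gx \in \PP^*_f$ because its domain has size below $\gl$ by regularity of $\gl$.

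The choice of $f_{\gx+1}$ is possible because there are fewer than $\gl$ dense open subsets of $\PP^*_f$ in $N_{\gx+1}$, and any $\leq^*$-decreasing sequence in $\PP^*_f$ of length less than $\gl$ has a common lower bound (its union is a function with domain of size $<\gl$ by regularity of $\gl$, and the clause $\dom g \supseteq \power{\dom g}+1$ is restored by one further $\leq^*$-extension). The cardinality bookkeeping at the $N_{\gx+1}$ step is where the arithmetic hypotheses on $\gl$ enter: if $\power{N_\gx} = \gm < \gl$ then $\power{\PSet_\gk(\gm)} \leq \gm^{<\gk} < \gl$ by hypothesis, and closing a set of cardinality $<\gl$ under its own $<\gk$-sequences is done by iterating the $<\gk$-closure operation $\gw$ many times, each step staying below $\gl$ by the same hypothesis and the $\gw$-union below $\gl$ by regularity. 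Thus $\power{N_{\gx+1}} < \gl$ throughout, and the remaining clauses of $\gk$-internal approachability follow at once from the construction.

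The genuine point of difficulty is the simultaneous satisfaction of all the requirements on $N_{\gx+1}$ at the successor step — closure under its own $<\gk$-sequences together with containment of $\PSet_\gk(\power{N_\gx})$, $\ordof{N_{\gx'}}{\gx' < \gx}$, and $f_\gx$, the ordinality of $N_{\gx+1} \intersect \gl$, and the size bound $<\gl$. The arithmetic hypothesis $\gm^{<\gk} < \gl$ for $\gm < \gl$ is precisely what is needed to keep the $<\gk$-sequence closure from pushing the cardinality up to $\gl$, which justifies the authors' earlier remark that this condition arose as a limitation encountered in the present lemma.
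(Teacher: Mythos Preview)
Your construction is correct and is exactly the standard way to produce a good pair; the paper itself gives no argument here, simply declaring the claim ``immediate'' after the definitions. Your recursion of length $\gk$, alternately enlarging the model and extending the $\PP^*_f$-condition to meet all dense open sets of the current model, is the intended argument, and your remark that the arithmetic hypothesis $\gm^{<\gk}<\gl$ is what keeps $\power{N_{\gx+1}}<\gl$ at the ${<}\gk$-closure step is precisely the point the authors flag in the introduction.

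One small technical wrinkle you glossed over: at a limit stage $\gx$ you assert $f_\gx=\bigunion_{\gx'<\gx}f_{\gx'}\in\PP^*_f$ merely because its domain has size ${<}\gl$, but the clause $\dom f_\gx\supseteq\power{\dom f_\gx}+1$ also needs checking. This in fact follows from the construction: for every ordinal $\ga\in N_\gx\cap\gl$ the dense set $\setof{g}{\ga\in\dom g}$ lies in some $N_{\gx'}$ and is met by $f_{\gx'}$, so $\dom f_\gx\supseteq N_\gx\cap\gl$; on the other hand $f_{\gx'}\in N_{\gx'+1}$ together with $\power{f_{\gx'}}\subseteq N_{\gx'+1}$ gives $\dom f_\gx\subseteq N_\gx$, whence $\power{\dom f_\gx}\leq\power{N_\gx}\in N_\gx\cap\gl$ and the required initial segment is contained in $\dom f_\gx$. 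Alternatively one can simply pad $f_\gx$ by the missing ordinals with value $\ordered{}$; either way the point is routine and does not affect your argument.
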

Assume $\gc$ is large enough and $N \subelem H_\gc$ is an elementary substructure such that
		$\PP \in N$.
		We say the condition $p \in N$ is $N$-generic if for each
				dense open subset $D \in N$ of $\PP$ we have
						$p \forces \formula{\VN{\PP} \intersect \CN{G} \intersect \VN{N} \neq \emptyset}$.
						
We say the forcing notion $\PP$ is $\gl$-proper if for an unbounded set of structures
	$N \subelem H_\gc$ such that $\PP \in N$ and $\power{N} < \gl$,
			and for each condition $p \in \PP \intersect N$ there is
				a stronger $N$-generic condition.

The followig lemma shows a property stronger than properness.
\begin{lemma}
Let $N\subelem H_\gc$ be a $\gk$-internally approachable structure,
	$\PP \in N$, 
		and
		$p \in N \intersect \PP$ a condition.
Then there is a direct extension $p^* \leq^* p$ such that
	for each dense open subset $D \in N$ of $\PP$
		the set $\setof{s \append p^*_{\ordered{\gn_0, \dotsc, \gn_{n-1}}\uparrow} \in D}
						{\ordered{\gn_0, \dotsc, \gn_{n-1}}  \in \leftexp{<\gw}{T}^{p^*},\ 
								s \leq^* p^*_{\ordered{\gnv}\downarrow}}$
								is predense below $p^*$.
Moreover, if $s \leq^* p^*_{\ordered{\gnv}\downarrow}$
				and
					$s \append p^*_{\ordered{\gn_0, \dotsc, \gn_{n-1}}\uparrow}
					 \in D$ then
	there is a weaker condition 
			$q \geq^* p^*_{\ordered{\gn_0, \dotsc, \gn_{n-1}}\uparrow}$
		such that $s \append q \in D \intersect N$.
\end{lemma}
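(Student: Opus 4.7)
The plan is to apply the good pair machinery to the last coordinate of $p$, leaving the earlier coordinates (whose critical points lie strictly below $\crit \vE$) unchanged. First I would use the given $\gk$-approximation sequence $\ordof{N_\gx}{\gx<\gk}$ of $N$ to construct, in the spirit of \cref{GetGoodPair}, a $\leq^*$-decreasing continuous sequence $\ordof{g_\gx}{\gx<\gk}$ in $\PP_f^*$ with $g_0 \leq^* f^{p_{n^p-1}}$ and $g_\gx \in N_{\gx+1}$, such that each $g_\gx$ meets every dense open subset of $\PP_f^*$ in $N_\gx$. Its union $g^*$ satisfies $g^* \subseteq N$, and the candidate $p^*$ is taken to agree with $p$ on the first $n^p-1$ coordinates, with $f^{p^*_{n^p-1}} = g^*$ and tree $T^{p^*_{n^p-1}} \in \vE(g^*)$ built below.

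The tree arises from a second, superimposed diagonalization. At stage $\gx < \gk$ I would enumerate the triples $\ordered{D, \ordered{\gnv}, s}$ with $D \in N_\gx$ a dense open subset of $\PP$, $\ordered{\gnv}$ a finite increasing sequence from $N_\gx \intersect \OB(\dom g_\gx)$, and $s \in N_\gx$ a direct extension of $(g_\gx)_{\ordered{\gnv}\downarrow}$; since $N_{\gx+1} \supseteq \leftexp{<\gk}{N_{\gx+1}}$, there are fewer than $\gl$ such triples and all lie in $N_{\gx+1}$. For each one, invoke the Prikry property (\cref{PrikryProperty}) applied to $(g_\gx)_{\ordered{\gnv}\uparrow}$ above the stem $s$ to decide membership in $D$, giving a tree $T_\gx(\cdot) \in \vE(g^*)$. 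Intersect the stage-$\gx$ trees via the $\gk$-completeness of $\vE(g^*)$, and then diagonally intersect across $\gx<\gk$ using \cref{DiagonalIsBig} and its tree-level analogue to produce $T^{p^*_{n^p-1}}$; the assumption $\gl^{<\gk} = \gl$ keeps the counting bounded.

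The predensity claim reduces to the following. Given $q \leq p^*$ and $D \in N$ dense open, say $D \in N_\gx$, decompose $q$ via \cref{RealOrder} into $\bar s \append p^*_{\ordered{\gnv}\uparrow}$ with $\ordered{\gnv} \in \bT^{p^*_{n^p-1}}$ and $\bar s \leq^* p^*_{\ordered{\gnv}\downarrow}$; further extend $\bar s$ to force some extension into $D$, and use the stage-$\gx$ tree refinement to locate an $s \leq^* p^*_{\ordered{\gnv}\downarrow}$ below $\bar s$ witnessing $s \append p^*_{\ordered{\gnv}\uparrow} \in D$. For the moreover clause, given $s$ as stipulated, the corresponding stage-$\gx$ data produce a weakening $q \geq^* p^*_{\ordered{\gnv}\uparrow}$ inside $N_\gx$ (by enlarging the last-coordinate tree to its $N_\gx$-definable version), yielding $s \append q \in D \intersect N$.

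The main obstacle is matching the $\gl$-sized family of potential shapes $\ordered{\gnv}$ against the $\gk$-length diagonalization. This is resolved by the normality of $\vE(g^*)$ captured by \cref{DiagonalIsBig}, which collapses shape-indexed refinements into a single tree, in tandem with $\gl^{<\gk} = \gl$ for counting and \cref{MakesSense} to ensure $T^{p^*_{n^p-1}}_{\ordered{\gnv}\downarrow} \in \vbn(\ran\ogn)$ as $\ordered{\gnv}$ varies. A secondary technicality is that \cref{GetGoodPair} produces its own $N$ rather than accepting one as input, so its construction has to be repeated inside the pre-specified $N$ using the given approximation sequence.
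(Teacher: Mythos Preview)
Your high-level plan---build a good pair for the last block, leave the earlier blocks alone, diagonalize trees---is exactly what the paper does. But two concrete steps in your proposal do not go through as written.

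First, invoking the Prikry property to ``decide membership in $D$'' is the wrong tool. A direct extension in $\PP^*$ is allowed to enlarge the $f$-part, so after applying the Prikry property you do not get ``a tree $T_\gx(\cdot) \in \vE(g^*)$'' as you claim, but rather a condition whose function part may strictly extend $g_\gx$; the tree you obtain then lives over the wrong domain. (Also, $D$ is a dense open set, not a forcing statement, so what formula is being decided needs to be spelled out.) The paper sidesteps this entirely by working in $\PP^*_f$ rather than $\PP$: for each triple $(\gnv, s, D)$ it sets
\[
D^\in_{\gnv,s,D} = \setof{g}{\exists T' \in \vE(g)\ s \append \ordered{g_{\ordered{\gnv}}, T'} \in D},
\qquad
D^\incompatible_{\gnv,s,D} = \setof{h}{\forall g \in D^\in_{\gnv,s,D}\ h \incompatible g},
\]
observes that $D^*_{\gnv,s,D} = D^\in \cup D^\incompatible$ is dense open in $\PP^*_f$, and uses the good-pair sequence to place $f_\gi$ in it for the appropriate $\gi$. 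When $f_\gi \in D^\in$ one reads off the witnessing tree $T^{\gnv,s,D}$ directly, with the $f$-part fixed at $f_\gi$. This is also what delivers the ``moreover'' clause cleanly: $f_\gi$ and $T^{\gnv,s,D}$ are found inside $N_{\gi+1}$, so the weaker condition $\ordered{f_{\gi\ordered{\gnv}}, T^{\gnv,s,D}}$ lies in $N$.

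Second, your counting breaks. At each stage $\gx$ you produce one tree per triple $(D,\gnv,s)$ and then propose to ``intersect the stage-$\gx$ trees via the $\gk$-completeness of $\vE(g^*)$''; but you yourself note there are fewer than $\gl$ such triples, and $\gl > \gk$, so $\gk$-completeness does not cover this intersection. The paper's organization avoids this with an indexing trick: the dense open subsets of $\PP$ in $N$ are enumerated as $\ordof{D_\ga}{\ga < \power{N}}$, and for a fixed $\gnv = \ordered{\gn_0,\dotsc,\gn_{k-1}}$ only those $D_\ga$ with $\ga \in \dom \gn_{k-1}$ are handled. Since $\power{\dom \gn_{k-1}} < \gk$ and $\power{\PP(\vbn_{k-1})} < \gk$, there are fewer than $\gk$ pairs $(s,D)$ per $\gnv$, so $T^\gnv = \bigintersect_{s,D} T^{\gnv,s,D}$ is a genuine $\gk$-completeness intersection. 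The global tree is then $T^* = \dintersect_{\gnv} \gp^{-1}_{f^*,f_{\gi(\gnv)}} T^\gnv$, a diagonal intersection indexed by $\gnv \in \leftexp{<\gw}{T}$ (this is where \cref{DiagonalIsBig} enters, not across the ordinal stages $\gx$). In the verification, given $D = D_\ga$ and $q \leq p^*$, one simply arranges $\ga \in \dom \gn_{k-1}$ by passing to a longer $\gnv$, which is harmless.
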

\begin{proof}
	Let $\ordered{N, f^*}$ be a good pair such that $f^* \leq^* f^{p_{n^p-1}}$.
	Choose a set $T \in \vE(f^*)$ such that 
							$\ordered{f^*, T} \leq^* p_{n^p-1}$.
	Let $\ordof {D_\ga}{\ga < \power{N}}$ be an enumeration 
	of the dense open subsets	of $\PP$ appearing in $N$.
	Let $\ordof{\ordered{N_\gi,f_\gi}}{\gi < \gk}$ be a sequence witnessing
			$\ordered{N, f^*}$ is a good pair.
	For each $\ordered{\gn_0, \dotsc, \gn_{k-1}} \in \leftexp{<\gw}{T}$ 
		 construct the set $T^{\ordered{\gn_0, \dotsc, \gn_{k-1}}}$ as follows.

	Fix $\gnv = \ordered{\gn_0, \dotsc, \gn_{k-1}} \in \leftexp{<\gw}{T}$.
\newcommand{\cD}{\mathcal{D}}
\newcommand{\cE}{\mathcal{E}}
Let $\cD = \setof {D_\ga}{\ga \in \dom \gn_{k-1}}$.
Note $\cD \in N$ since $\power{\gn_{k-1}} < \gk$ and
		 $N \supseteq \leftexp{<\gk}{N}$.
For each $s \in \PP(\vbn_{k-1})$ and $D \in \cD$ define	the sets
		$D^\in_{\gnv,s,D}$,
					 $D^\incompatible_{\gnv,s,D}$, and
					 $D^*_{\gnv,s,D}$,
					  as follows:
	Let $g \in D^\in_{\gnv,s,D}$
		if $g \leq f^{p_{n^p-1}}$, $\dom g \supseteq \dom \gn_{k-1}$,
			 and $s \append \ordered{g_{\ordered{\gnv}},T'} \in D$ for some
			 			 $T' \in \vE(g)$.
	Let $h \in D^\incompatible_{\gnv,s,D}$ if
		$h \incompatible g$
		for each $g \in D^\in_{\ordered{\gnv},s,D}$.
	Set $D^*_{\gnv,s,D} = 
					D^\in_{\gnv,s,D} \union
				D^\incompatible_{\gnv,s,D}$.
		It is immediate  $D^\in_{\gnv,s,D}$ and
				$D^\incompatible_{\gnv,s,D}$ are open subsets of
						$\PP^*_f$ below$f^{p_{n^p-1}}$.
		Thus  $D^*_{\gnv,s,D}$ is a dense open subset of
			$\PP^*_f$ below $f^{p_{n^p-1}}$.
			Set $D^*_{\gnv} =
						 \bigintersect \setof {D^*_{\gnv,s,D}}
						 		{s \in \PP(\vbn_{k-1}),\  D \in \cD}$.
				Note $D^*_{\gnv} \in N$ is a dense open subset of $\PP^*_f$
						below $f^{p_{n^p-1}}$.
	Let $\gi < \gk$ be minimal 
			  such that 
						$\gnv,\cD, D^*_{\gnv} \in N_{\gi}$.
	Then $f_{\gi} \in D^*_{\gnv}\intersect N_{\gi+1}$.
	Thus for each $s \in \PP(\vbn_{k-1})$ and $D \in \cD$
	either there is a set $T^{\gnv,s,D} \in \vE(f_{\gi}) \intersect
							 N_{\gi+1}$
				 such that
	$s \append 
			\ordered{f_{\gi\ordered{\gnv}}, 
																		T^{\gnv,s,D}} \in D$
																					or
			  			  	$s \append \ordered{h,T''}\notin D$
			  			  		for each $h \leq^* f_{\gi\ordered{\gnv}}$ and $T''\in \vE(h)$.
			Set $T^\gn = \bigintersect \setof {T^{\gnv,s,D}}
								{s \in \PP(\vbn_{k-1}),\ D \in \cD,\ 
										s \append \ordered{f_{\gi\ordered{\gnv}},T^{\gn,s,D}}\in D }$.
										
	Set $T^* = \dintersect \setof {\gp^{-1}_{f^*,f_{\gi(\gnv)}}T^\gnv}
						{\gnv \in \leftexp{<\gw}{T}}$.
		Set $p^* = p\restricted n^p-1 \append \ordered{f^*, T^*}$.
		We claim $p^*$ satisfies the lemma.
		To show this fix a dense open subset $D \in N$ and a condition
				$q \leq p^*$.
				
		Let $\ga < \power{N}$ be such that $D = D_\ga$.
		Without loss of generality assume
				$q \in D$,
			$q_{n^q-1} \leq p^*_{\ordered{\gn_0, \dotsc, \gn_{k-1}}\uparrow}$,
						$\ordered{\gn_0, \dotsc, \gn_{k-1}} \in \leftexp{<\gw}{T}^*$,
					 and
					$\ga \in \dom \gn_{k-1}$.
				Set $s = q \restricted n^q - 1$.
				Thus  $q = s \append \ordered{f^{q_{n^q-1}}, T^{q_{n^q-1}}} \in D$.
				Let $\gi<\gk$ be minimal such that
						$\ordered{\gn_0, \dotsc, \gn_{k-1}}, \setof{D_\ga}{\ga \in \dom \gn_{k-1}}\in N_\gi$.
				Since $f^{q_{n^q-1}} \leq^* f_{\gi\ordered{\gn_0, \dotsc, \gn_{k-1}}}$
				 we must have
						$s \append \ordered{f_{\gi\ordered{\gnv}}, T^{\gnv,s,D}} \in D$,
						hence $s \append p^*_{\ordered{\gnv}\uparrow} \in D$.
		It is clear $q$ and $s \append p^*_{\ordered{\gnv}\uparrow}$
				 are compatible.
		In addition $s \append \ordered{f_{\gi\ordered{\gnv}}, T^{\gnv,s,D}} \in N$,
				thus we are done.
 \end{proof}
 \begin{corollary} \label[claim]{WeAreProper}
$\PP$ is $\gl$-proper.
\end{corollary}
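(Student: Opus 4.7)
The plan is to derive the corollary as an immediate repackaging of the preceding lemma: the condition $p^*$ produced there is already $N$-generic, once one unpacks the predensity/moreover dichotomy into the standard language.

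First I would confirm that the family of suitable substructures $N$ is unbounded in the sense required. \Cref{GetGoodPair} supplies, for any prescribed parameter, a good pair $\ordered{N, f^*}$ in which $N$ is $\gk$-internally approachable with $\PP \in N$, and the cardinal hypothesis $\gm^{<\crit \vE} < \gl$ for $\gm < \gl$ (together with the regularity of $\gl$) keeps each level of the approximating chain of cardinality strictly below $\gl$, yielding $\power{N} < \gl$.

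Next, fix such an $N$ and a condition $p \in N \cap \PP$, and let $p^* \leq^* p$ be the condition produced by the preceding lemma. To verify $N$-genericity, I would fix a dense open $D \in N$ and an arbitrary $q \leq p^*$, and it suffices to produce $r \leq q$ sitting below some member of $D \cap N$. The predensity clause of the lemma supplies $\gnv \in \leftexp{<\gw}{T}^{p^*}$ and $s \leq^* p^*_{\ordered{\gnv}\downarrow}$ with $s \append p^*_{\ordered{\gnv}\uparrow} \in D$ and compatible with $q$; let $r$ be a common refinement. The moreover clause then hands us $q^* \geq^* p^*_{\ordered{\gnv}\uparrow}$ with $s \append q^* \in D \cap N$, and $p^*_{\ordered{\gnv}\uparrow} \leq^* q^*$ gives $r \leq s \append p^*_{\ordered{\gnv}\uparrow} \leq s \append q^*$. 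Thus $r$ extends the element $s \append q^*$ of $D \cap N$, which witnesses $p^* \forces \CN G \cap \VN D \cap \VN N \neq \emptyset$.

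There is no substantive obstacle; the only point to flag is that $N$-genericity demands the witness lie simultaneously in $D$ and in $N$, which is exactly why the preceding lemma was stated with the moreover refinement rather than merely the bare predensity clause. In particular the entire combinatorial and tree-theoretic work is already done inside the lemma, and the corollary is purely notational.
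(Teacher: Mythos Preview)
Your proposal is correct and matches the paper's intent: the corollary is stated there with no proof at all, precisely because the preceding lemma was engineered so that the condition $p^*$ it produces is already $N$-generic, and your unpacking of the predensity-plus-moreover clause into the genericity verification is exactly the intended reading. The only point worth noting is that the paper leaves all of this implicit, whereas you have spelled out both the unboundedness of suitable $N$ (via \cref{GetGoodPair} and the cardinal arithmetic on $\gl$) and the chain $r \leq s \append p^*_{\ordered{\gnv}\uparrow} \leq s \append q^* \in D \cap N$; nothing further is needed.
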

\begin{corollary}
	$\forces \formula{\gl \text{ is a cardinal}}$.
\end{corollary}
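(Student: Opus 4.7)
The plan is to deduce preservation of $\gl$ from $\gl$-properness (\cref{WeAreProper}) by a standard model-theoretic argument. Suppose toward contradiction that some condition $p \in \PP$ forces $\gl$ to be collapsed; then there is a $V$-cardinal $\gq < \gl$ and a $\PP$-name $\CN{f}$ with $p \forces \formula{\CN{f}\func\VN{\gq}\to\VN{\gl} \text{ is surjective}}$. Using the unbounded family of $\gk$-internally approachable structures supplied by \cref{WeAreProper}, I would select a $\gk$-internally approachable $N \subelem H_\gc$ containing $p, \CN{f}, \gq, \PP$ with $\gq < \power{N} < \gl$.

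The key observation is that for such an $N$, the set $N \intersect \gl$ is an ordinal: each $N_\gx \intersect \gl$ lies in $\On$ by the definition of $\gk$-internal approachability, and $N = \bigunion\setof{N_\gx}{\gx<\gk}$, so $N \intersect \gl = \sup\setof{N_\gx \intersect \gl}{\gx<\gk}$ is itself an ordinal, of cardinality $\power{N}$. Since $\gq < \power{N} < \gl$ we obtain $\gq \subseteq N$ and $N \intersect \gl < \gl$.

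Next I would apply \cref{WeAreProper} to obtain an $N$-generic condition $p^* \leq p$. For each $\ga < \gq$, the set $D_\ga = \setof{r \in \PP}{r \decides \CN{f}(\VN{\ga})}$ is a dense open subset of $\PP$ lying in $N$ (since $\ga, \CN{f} \in N$). By $N$-genericity, $p^* \forces \formula{\CN{G} \intersect D_\ga \intersect \VN{N} \neq \emptyset}$. Any $r \in \CN{G} \intersect D_\ga \intersect N$ decides $\CN{f}(\VN{\ga})$ to a value in $N \intersect \gl$, so I conclude $p^* \forces \formula{\ran\CN{f}\subseteq\VN{N \intersect \gl}}$. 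This contradicts surjectivity of $\CN{f}$ onto $\gl$ since $\power{N \intersect \gl} = \power{N} < \gl$.

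The main obstacle to iron out is ensuring that the chosen $N$ simultaneously belongs to the $\gl$-proper family (so that an $N$-generic extension exists) and satisfies $\gq \subseteq N$. Both conditions are met by inspecting the preceding lemma: its proof shows that $\gk$-internally approachable structures of arbitrarily large size $<\gl$ admit $N$-generic extensions, and internal approachability forces $N \intersect \gl$ to be an ordinal of size $\power{N}$, which is what secures $\gq \subseteq N$ once $\power{N}$ is chosen above $\gq$.
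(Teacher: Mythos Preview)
Your argument is correct and is precisely the standard derivation of cardinal preservation from a properness-type property; the paper states the corollary without proof, immediately after \cref{WeAreProper}, so your write-up simply spells out what the paper leaves implicit. One minor polish: to guarantee that the decided value of $\CN{f}(\VN{\ga})$ lies below $\gl$, it is cleanest to take $D_\ga$ to be dense open below $p$ (or to add the disjunct $r \incompatible p$), so that any $r \in D_\ga \intersect \CN{G}$ already extends $p$ and hence forces $\CN{f}(\VN{\ga}) < \VN{\gl}$; the elementarity of $N$ then places the value in $N \intersect \gl$ as you claim.
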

%
%
%
%
%
%
%
%
%
%
%
%
%
%
%
%
%
%
%
%
%
%
%
%
%
%
%
\section{Dense open sets and measure one sets} \label[section]{sec:Dense}
In order to reduce clutter later on, 
	given a condition $p \in \PP^*$,
		we will say 	a tree is a $p$-tree instead of saying it is an $\vE(f^p)$-tree.
  If $S$ is a $p$-tree and $r$ is a function with domain $S$ then we define the function $\Vec{r}$
 	by setting for each
	$\gnv = \ordered{\gn_0, \dotsc, \gn_n} \in S$,
			$\Vec{r}(\gnv) = r(\gn_0) \append \dotsb \append r(\gn_0, \dotsc, \gn_{n})$.
A function $r$ is said to be a $\ordered{p, S}$-function if
		$S$ is a $p$-tree,
			for each  $\gnv \in \Lev_{<\max}S$, 
			$\Vec{r}(\gnv) \leq^{**} p_{\ordered{\gnv}\downarrow}$,
			and
			for each $\gnv \in \Lev_{\max}S$, $\Vec{r}(\gnv) \leq^{**} p_{\ordered{\gnv}}$.
\subsection{One of the measures suffices.}
The aim of this subsection is to prove \cref{GetPreDense}, which together with
	\cref{DenseHomogen} will allow the investigation  of the 
			cardinal structure  below $\gk$.
Note the proof	of \cref{DenseHomogen} depends on \cref{GetPreDense}.
The following lemma, which is quite technical,
		takes its core argument from the proof
		of the Prikry property for Radin forcing.
\begin{lemma} 
Assume $p \in \PP^*$ is a condition, $S$ is a $p$-tree of height one,
	and $r$ is a $\ordered{p,S}$-function.
Then there is a strong direct extension $p^* \leq^{**} p$
	such that 
			$\setof{ r(\gn)}{\ordered{\gn} \in S}$
				is predense below $p^*$.
\end{lemma}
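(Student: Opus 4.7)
The plan is to construct $p^*$ as a single shrinking of the tree $T^p$ that encodes all the tree data carried by $r$, using the diagonal intersection technology of \cref{DiagonalIsBig}; this is the pattern of the Prikry property argument for Radin forcing. Since $S$ is a height-one $p$-tree, the definition of an $\vE(f^p)$-tree fixes a single $\gx_0 < \mo(\vE)$ with $\Lev_0(S) \in E_{\gx_0}(f^p)$, and I identify $S$ with this measure-one subset of $\OB(f^p)$. For each $\gn \in S \intersect T^p$, unpack $r(\gn) = \ordered{r_\downarrow(\gn), r_\uparrow(\gn)}$ and record the measure-one sets $U(\gn) = T^{r_\uparrow(\gn)} \in \vE(f^p)$ and $B(\gn) = T^{r_\downarrow(\gn)} \in \vbn(\ran \ogn)$.

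First I form $W = \dintersect_{\gn \in S \intersect T^p} U(\gn) \in \vE(f^p)$ via \cref{DiagonalIsBig}, so that $\gn \in S \intersect T^p$ and $\gn < \gm \in W$ force $\gm \in U(\gn)$. Then I further shrink $W$ so that for each $\gn \in S \intersect W$ the reflected tree $W_{\ordered{\gn}\downarrow}$ sits inside $B(\gn)$; \cref{MakesSense} guarantees these reflections are themselves measure-one in the relevant $\vbn(\ran \ogn)$, and a second, ``below'', diagonal intersection absorbs all the $B(\gn)$ constraints at once. Setting $T^{p^*} = T^p \intersect W$ gives the strong direct extension $p^* = \ordered{f^p, T^{p^*}} \leq^{**} p$.

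To verify that $\setof{r(\gn)}{\ordered{\gn}\in S}$ is predense below $p^*$, fix $q \leq_{\Bar{\PP}} p^*$. By \cref{RealOrder} this is witnessed by some $\ordered{\gm_1, \dotsc, \gm_k} \in \bT^{p^*}$ with $q \leq^{*} p^*_{\ordered{\gm_1, \dotsc, \gm_k}}$ componentwise. The tree of the top slot, $T^{q_{n^q-1}} \in \vE(f^{q_{n^q-1}}) \subseteq E_{\gx_0}(f^{q_{n^q-1}})$, projects to a subset of $\OB(f^p)$ which meets $S$, so I pick $\gn' \in T^{q_{n^q-1}}$ whose restriction $\gn = \gn' \restricted \dom f^p$ lies in $S$; since $T^{q_{n^q-1}}$ only sees points above $\gm_k$, $\gn > \gm_k$. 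Decomposing $q_{n^q-1}$ via $\gn'$ splits the top slot into two components which, by the construction of $W$, are strong direct extensions of $r_\downarrow(\gn)$ and $r_\uparrow(\gn)$ respectively. This yields a common extension of $q$ and $r(\gn)$, establishing compatibility.

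The main obstacle will be the simultaneous juggling of the two layers of diagonal intersection—above and below each $\gn \in S$—so that the single shrink $T^{p^*}$ absorbs tree constraints living both in $\vE(f^p)$ and, after reflection, in the smaller filter $\vbn(\ran \ogn)$. This nesting of reflections through $(\cdot)_{\ordered{\gn}\downarrow}$ is what the Radin-style Prikry argument controls, and is the technical heart of the lemma.
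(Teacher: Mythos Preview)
Your outline captures the paper's overall strategy but contains a genuine gap in the verification step, and the ``second, below, diagonal intersection'' you invoke hides exactly the technical work the lemma requires.

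The problem is the following. Fix $q \leq p^*$ witnessed by $\ordered{\gm_1,\dotsc,\gm_k} \in \bT^{p^*}$. You choose $\gn \in S$ with $\gn > \gm_k$ and claim that decomposing at $\gn$ yields compatibility with $r(\gn)$. For this you need each $\gm_i$ to satisfy $\gm_i \downarrow \gn \in B(\gn)$, which in particular requires $\mo(\gm_i) < \mo(\gn)$. But $S$ lies in a \emph{single} $E_{\gx_0}(f^p)$, so a typical $\gn \in S$ has $\mo(\gn)$ (roughly) $\gx_0$, whereas $T^{p^*} \in \vE(f^p)$ must contain points of every Mitchell order below $\mo(\vE)$. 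Nothing prevents some $\gm_i$ from having $\mo(\gm_i) > \gx_0$, and then $\gm_i$ simply does not reflect through $\gn$; your argument stalls. This is precisely the case (3) of the paper's proof, handled by the auxiliary set $T^2$: one must instead find a suitable $\gn \in S$ \emph{below} such a $\gm_i$, which requires having arranged in advance that elements of high Mitchell order see many $S$-points beneath them reflecting correctly.

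Relatedly, the ``below'' diagonal intersection you describe is not available off the shelf: the sets $B(\gn)$ live in the varying filters $\vbn(\ran\ogn)$, not in $\vE(f^p)$, so there is no single ambient filter in which to intersect them. The paper's manoeuvre is to pass to the ultrapower and set $T^0 = T^{j_{E_{\gx_0}}(r_0)(\mc_{\gx_0}(f^p))} \in \vE\restricted\gx_0(f^p)$, a \emph{fixed} set which on an $E_{\gx_0}$-large set of $\gn$ reflects to $B(\gn)$. This, together with the construction of $T^1$ and $T^2$, is the substance you are missing; the three-case verification then follows. Your sketch essentially covers only the paper's case (2).
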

\begin{proof}
	Define the functions $r_0$ and $r_1$, both with domain $S$, so that
		$r(\gn) = r_0(\gn) \append r_1(\gn)$ will hold for each $\ordered{\gn} \in S$.
	Fix $\gx < \mo(\Vec{E})$ so that $S \in E_{\gx}(f^p)$ will hold.
	We need to collect the information from the sets $T^{r_0(\gn)}$
			 and $T^{r_1(\gn)}$ into one set $T^*$.
	The information from the sets $T^{r_1(\gn)}$'s is collected by 
	setting
		$R = \dintersect_{\ordered{\gn} \in S} T^{r_1(\gn)}$.
By \cref{DiagonalIsBig} $R \in \vE(f^p)$.

	The information from the sets $T^{(r_0(\gn))}$'s is collected into the 
				set $T^*$ as follows.
		The set $T^*$ will be the union of the three sets $T^0, T^1$, and $T^2$,
			 which we construct now.
	The construction of $T^0$ is easy.	Set
		$T^{0} = T^{j_{E_\gx}(r_0)(\mc_\gx(f^p))}$.
	It is obvious $T^0 \in \vE\restricted \gx(f^p)$.
	
The constructin of $T^1$ is slightly more involved than the construction of $T^0$.	
	Set $T^{1\prime} = \setof {\ordered{\gn} \in S}
							{T^0_{\ordered{\gn} \downarrow}= T^{r_0(\gn)}}$.
 From the construction of $T^0$ it is clear $T^{1\prime} \in E_{\gx}(f^p)$.
For each $\gm \in T^0$ set $X(\gm) = \setof {\ordered{\gn} \in S}
			{\gm < \gn,\ \gm\downarrow \gn \in T^{r_0(\gn)}}$.
From the construction of $T^0$ 
	we get $X(\gm) \in E_\gx(f^p)$.
	Set $T^1 = \setof {\gn \in T^{1\prime}}
					{\forall \gm \in T^0\ (\gm < \gn \implies \gn \in X(\gm))}$.
	We show $T^1 \in E_\gx(f^p)$.
	Thus  we need to show $\mc_\gx(f^p) \in j_{E_\gx}(T^1)$.
	Since $\mc_\gx(f^p) \in j_{E_\gx}(T^{1\prime})$ it is enough to show that if
			$\gm \in j_{E_\gx}(T^0)$ and $\gm < \mc_\gx(f^p)$ then
											 $\mc_\gx(f^p) \in j_{E_\gx}(X)(\gm)$.
	So fix $\gm \in j_{E_\gx}(T^0)$ such that $\gm < \mc_\gx(f^p)$.
	Then $\power{\gm} < \gk$, $\dom \gm \subseteq j''_{E_\gx}(\dom f^p)$, and
				$\sup \ran \ogm < \gk$.
	Necessarily there is $\gm^* \in T^0$ such that $\gm = j_{E_\gx}(\gm^*)$.
	Hence $j_{E_\gx}(X)(\gm) = j_{E_\gx}(X(\gm^*)) \ni \mc_\gx(f^p)$,
										 by which we are done.
	
	We construct now the set $T^2$.
	For each $\gm \in R$ set
	$Y(\gm) = \setof{\gn\downarrow\gm  \in R_{\ordered{\gm}\downarrow}}
									{\gn \in T^1,\ 
									R _{
											\ordered{\gm}\downarrow
											 \ordered{\gn\downarrow\gm}\downarrow}\in \vbn(\dom \gn)
											}$.
Now let 
				$T^2 = \setof {\gm \in R}
				{\exists \gt < \mo(\gm)\ Y(\gm)		 \in \vbm_\gt(\dom \gm)}$.
		We show $T^2 \in E_\gz(f^p)$ for each $\gz > \gx$.
		We need  to show for each $\gz > \gx$, $\mc_\gz(f^p) \in j_{E_\gz}(T^2)$.
		Fix $\gz > \gx$.
		We show $\mc_\gz(f^p) \in j_{E_\gz}(T^2)$.
		It is enough to show there is $\gt < \gz$ such that 
					$j_{E_\gz}(Y)(\mc_\gz(f^p)) \in E_\gt(f^p)$.
We claim $\gx$ can serve as the needed $\gt < \gz$.
Thus it is enough to show
			$j_{E_\gz}(Y)(\mc_\gz(f^p)) \in E_\gx(f^p)$.
Hence we need to show
		\begin{multline*}
				\setof {
														\gn\downarrow \mc_\gz(f^p) \in j_{E_\gz}(R)_
																						{\ordered{\mc_\gz(f^p)}\downarrow}
					}
				{
								\gn \in j_{E_\gz}(T^1),\\
											\ j_{E_\gz}(R)_{
																\ordered{	\mc_\gz(f^p)}\downarrow
						 									\ordered{\gn\downarrow\mc_\gz(f^p)}\downarrow
												}
																							 \in \vbn(\dom \gn)
				} \in E_\gx(f^p).
\end{multline*}
Note $R^* = j_{E_\gz}(R)_{\ordered{\mc_\gz(f^p)}\downarrow} 
						\in \vE\restricted \gz(f^p)$, and 
		if $\gn \in j_{E_\gz}(T^1)$ and $\gn < \mc_\gz(f^p)$, then there is
		$\gn^* \in T^1$ such that $\gn = j_{E_\gz}(\gn^*)$.
Moreover, $\gn^* = \gn \downarrow \mc_\gz(f^p)$.
Hence it is enough to show
	\begin{align*}
				\setof {
														\gn^* \in R^*
					}
				{
								\gn^* \in T^1,
											\ R^*_{
								 									\ordered{\gn^*}\downarrow
												}
																							 \in \vbn^*(\dom \gn^*)
				} \in E_\gx(f^p).
\end{align*}
We are done since the last formula holds.

	Having constructed $T^0$, $T^1$, and $T^2$ we set
			$p^* = \ordered{f^p, T^* \intersect R}$.
	We will be done by showing $\setof {r(\gn)}{\gn \in S}$
								 is predense below $p^*$.
	Assume $q \leq p^*$. 
		We need to exhibit $\gn\in S$ so that $q \compatible r(\gn)$.
	We work as follows.
	Fix $\ordered{\gm_0, \dotsc, \gm_{n-1}}  \in \bT^{p^*}$ such that
		$q \leq^* p^*_{\ordered{\gm_0, \dotsc, \gm_{n-1}}}$.
	There are three cases to handle:
	\begin{enumerate}
	\item
	Assume there is $i<n$  such that
				 $\ordered{\gm_0, \dotsc, \gm_{i-1}} \in \bT^0$ and
											$\gm_i \in T^1$.
	The construction of $T^1$ yields 
				$\ordered{\gm_0, \dotsc, \gm_{i-1}} \in T^{r_0(\gm_i)}$ and
		the construction of $R$	 yields 
					$\ordered{\gm_{i+1}, \dotsc, \gm_{n-1}} \in \bT^{r_1(\gm_i)}$.
	Hence $r_{0}(\gm_i)_{\ordered{\gm_0, \dotsc, \gm_{i-1}}} \append
							 r_1(\gm_i)_{\ordered{\gm_{i+1}, \dotsc, \gm_{n-1}}}$
		and $q$ are $\leq^*$-compatible, by which this case is done.
		\item
	Assume  $\ordered{\gm_0, \dotsc, \gm_{n-1}} \in \bT^0$.
	By the construction of $T^1$ the set
			 $X = \setof{\gn \in T^1}
			 			{\ordered{\gm_0, \dotsc, \gm_{n-1}}\downarrow \gn
			 						 \in \bT^0_{\ordered{\gn}\downarrow}} \in E_\gx(f^p)$.
	Choose $\gn^* \in T^{q_{n^{n^q}-1}}$ such that 
		$\gn = \gn^* \restricted f^p \in X$.
	Then $q_{\ordered{\gn^*}} \leq^* 
					p_{\ordered{\gm_0, \dotsc, \gm_{n-1}, \gn}}$.
	Now we can procced as in the first case above.
\item	
	The last case is when there is $i<n$  such that 
					$\ordered{\gm_0, \dotsc, \gm_{i-1}} \in \bT^0$ and
											$\gm_i \notin T^0 \union T^1$.
	By the construction of $T^2$ there is $\gt < \mo(\gm_i)$ such that
			 $Y = Y(\gm_i)  \in \vbm_{i\gt}(\dom \gm_i)$.
Hence there are $\gm_{i\gt}(\dom \gm_i)$-many  $\gn \downarrow \gm_i$
				 such that
				 $\gn \in T^1$, 
					$\gn\downarrow\gm_i \in T^*_{\ordered{\gm_i}\downarrow}$  and
					$T^*_{\ordered{\gm_i}\downarrow \ordered{\gn}\downarrow \gm_i} \in
								 \gn(\dom \gn)$.

	Thus there is  $\gs^* \in T^{q_i}$ such that 
						$\gs = \gs ^*\restricted \dom f^{p_i} \in Y$,
				where $\gs = \gn \downarrow \gm_i$ and $\gn \in T^1$.
	Thus $q_{\ordered{\gs^*}} \leq^*
					 p_{\ordered{\gm_0, \dotsc, \gm_{i-1}, \gn, \gm_i, \dotsc, \gm_{n-1}}}$
			and we can proceed as in the first case above.
	\end{enumerate}
\end{proof}%
\begin{corollary}  \label[corollary]{GetPreDense1}
Assume $p \in \PP$ is a condition, $S$ is a $p_{n^p-1}$-tree of height one,
	and $r$ is a $\ordered{p_{n^p-1},S}$-function.
Then there is a strong direct extension $p^* \leq^{**} p$
	such that $p^* \restricted n^p-1 = p \restricted n^p-1$ and
			$\setof{p \restricted n^p-1 \append  r(\gn)}{\ordered{\gn} \in S}$
				is predense open below $p^*$.
\end{corollary}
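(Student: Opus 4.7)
The approach is to reduce to the preceding lemma by applying it only to the last coordinate $p_{n^p-1}$, leaving the earlier coordinates of $p$ untouched; this works because the forcing $\PP/p$ factors over its coordinates (as noted in the paragraph following \cref{MakesSense}) and both $S$ and $r$ concern only the last coordinate.

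First I invoke the preceding lemma on the condition $p_{n^p-1} \in \PP^*(\vE, \gee)$, the $p_{n^p-1}$-tree $S$, and the $\ordered{p_{n^p-1}, S}$-function $r$. This supplies a strong direct extension $p^{*}_{n^p-1} \leq^{**} p_{n^p-1}$ in $\PP^*$ such that $\setof{r(\gn)}{\ordered{\gn} \in S}$ is predense below $p^{*}_{n^p-1}$ in $\PP^*$. I then set $p^* = \ordered{p_0, \dotsc, p_{n^p-2}, p^{*}_{n^p-1}}$. Since only the last coordinate has been strongly directly refined, the clauses $p^* \leq^{**} p$ in $\PP$ and $p^* \restricted (n^p-1) = p \restricted (n^p-1)$ are immediate from the definitions of $\leq^{**}_{\bar{\PP}}$.

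To verify the predensity assertion, fix an arbitrary $q \leq p^*$ in $\PP$. Because $\vE^q_{n^q-1} = \vE^{p^*}_{n^p-1} = \vE$, unpacking \cref{RealOrder} produces an index $j_0 < n^q$ and a sequence $\ordered{\gm_0, \dotsc, \gm_{k-1}} \in \bT^{p^{*}_{n^p-1}}$ with $\ordered{q_{j_0+1}, \dotsc, q_{n^q-1}} \leq^* p^{*}_{n^p-1 \ordered{\gm_0, \dotsc, \gm_{k-1}}}$. The tail $\ordered{q_{j_0+1}, \dotsc, q_{n^q-1}}$ packages naturally into a $\PP$-condition below $p^{*}_{n^p-1}$ in the factored sense; applying predensity of $\setof{r(\gn)}{\ordered{\gn}\in S}$ below $p^{*}_{n^p-1}$ yields some $\gn \in S$ and a common extension of this tail with $r(\gn)$. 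Prefixing by $q \restricted (j_0+1)$, which already extends $p \restricted (n^p-1) = p^* \restricted (n^p-1)$ by $q \leq p^*$, produces the required common $\PP$-extension of $q$ and $p \restricted (n^p-1) \append r(\gn)$.

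The only real obstacle is the bookkeeping that arises because $q$ may split the single component $p^{*}_{n^p-1}$ into several new components $q_{j_0+1}, \dotsc, q_{n^q-1}$; once one checks that the concatenation of these components indeed embeds as an extension of $p^{*}_{n^p-1}$ in the sense in which the lemma's predensity was established, the conclusion transfers from $\PP^*$ to $\PP$ without any further combinatorial work.
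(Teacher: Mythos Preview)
Your proposal is correct and is precisely the argument the paper intends: the corollary is stated without proof, as it follows immediately from applying the preceding lemma to the last coordinate $p_{n^p-1}$ and invoking the factorization $\PP/p \simeq P_0 \times P_1$ recorded after \cref{MakesSense}. Your verification of predensity via the split of $q$ into an initial segment extending $p\restricted n^p-1$ and a tail extending $p^*_{n^p-1}$ is exactly the intended unpacking.
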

Generalize the notions of $p$-tree and $\ordered{p, S}$-function
		to arbitrary condition $p \in \PP$
		as follows.
		By recursion we say the tree $S$ is a $p$-tree if there is $n<\gw$ for which following hold:
\begin{enumerate}
\item
	$\Lev_{< n}(S)$ is a $p\restricted n^p-1$-tree.
\item
		For each $\gnv \in \Lev_{n-1}(S)$, $S_{\ordered{\gnv}}$ is a $p_{n^p-1}$-tree.
\end{enumerate}
Let $p \in \PP$ be an arbitrary condition.
By recursion we say the function $r$ is a $\ordered{p,S}$-function if there is $n < \gw$ such that:
\begin{enumerate}
	\item
		$S$ is a $p$-tree.
	\item
		$\Lev_{< n}(S)$ is a $p \restricted n^p-1$-tree.
\item
		$ r\restricted \Lev_{< n}S$ is a $\ordered{\Lev_{< n}S, p\restricted n^p-1}$-function.
	\item
		For each $\gnv \in \Lev_{n-1}(S)$
			 the function $s$ with domain $S_{\ordered{\gnv}}$, 
			 	define by setting $s(\gmv) = r(\gnv\append\gmv)$, is a
			 		$\ordered{p_{n^p-1}, S_{\ordered{\gnv}}}$-function.
\end{enumerate}
\begin{claim} \label[claim]{GetPreDense}
Assume $p \in \PP$ is a condition, $S$ is a $p$-tree,
	and $r$ is a $\ordered{p,S}$-function.
Then there is a strong direct extension $p^* \leq^{**} p$
	such that 
			$\setof{ \Vec{r}(\gnv)}{\gnv \in S}$
				is predense below $p^*$.
\end{claim}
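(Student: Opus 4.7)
The plan is to proceed by induction on the lexicographic pair $\ordered{n^p, \hei S}$. The base case $n^p = 1$, $\hei S = 1$ is the first lemma of this subsection; the general-$n^p$ height-one case (with the tree nontrivial only at the last coordinate) is \cref{GetPreDense1}.

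For the main inductive step with $p \in \PP^*$ and $S$ of height $k + 1 \geq 2$, the idea is to shave off the top level. For each penultimate node $\gnv \in \Lev_{k-1}(S)$, the set $\Suc_S(\gnv)$ together with the assignment $\gn \mapsto r(\gnv \append \ordered{\gn})$ forms a height-one data pair above $p_{\ordered{\gnv}\uparrow}$, after factoring $\Vec{r}(\gnv \append \ordered{\gn})$ into its parts below and above $\gn$. The base lemma then yields a strong direct extension $q(\gnv) \leq^{**} p_{\ordered{\gnv}\uparrow}$ below which the set $\setof{\Vec{r}(\gnv \append \ordered{\gn})}{\gn \in \Suc_S(\gnv)}$ is predense. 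Defining $\tilde{r}$ on the truncated tree $\Lev_{\leq k-1}(S)$ by $\tilde{r}(\gnv) = \Vec{r}(\gnv) \append q(\gnv)$ for $\gnv \in \Lev_{k-1}(S)$ and $\tilde{r} = r$ below, the inductive hypothesis applied to the resulting height-$k$ data produces $p^* \leq^{**} p$ making $\setof{\Vec{\tilde{r}}(\gnv)}{\gnv \in \Lev_{k-1}(S)}$ predense below $p^*$. Combining this with the fiberwise predensity at each $\gnv$ then yields predensity of $\setof{\Vec{r}(\gmv)}{\gmv \in S}$ below $p^*$.

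For length $n^p > 1$, I would first apply the $\PP^*$ case to each fiber subtree $S_{\ordered{\gnv}}$ at the last coordinate (for $\gnv$ in the splitting level $\Lev_{n-1}(S)$), obtaining a strong direct extension of $p_{n^p - 1}$ for each such $\gnv$. These fiberwise extensions are then merged into a single strong direct extension of $p_{n^p - 1}$ by diagonally intersecting the underlying measure-one sets. Applying the induction hypothesis on $n^p$ to $p \restricted (n^p - 1)$ paired with the outer tree $\Lev_{<n}(S)$ then completes the step.

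The main obstacle is this simultaneous combination of the fiberwise strong direct extensions $q(\gnv)$, indexed by penultimate nodes $\gnv$, into a single strong direct extension of $p$ that absorbs all of them at once. This is precisely the role of the normality of $\vE(\dom f^p)$ established in \cref{DiagonalIsBig} together with the absorption in \cref{MakesSense}: diagonal intersection over the penultimate nodes produces a measure-one set on which, whenever a generic passage traverses a node $\gnv$, the remaining tree lies below the specific $q(\gnv)$ chosen at $\gnv$.
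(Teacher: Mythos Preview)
Your proposal is correct and follows the same recursive strategy as the paper: handle the top block via the height-one lemma and then recurse on $n^p$ for the earlier blocks, combining the fiberwise predensity with the outer predensity at the end. You are in fact more explicit than the paper about the height induction within a single $\PP^*$ block (the paper simply invokes \cref{GetPreDense1} as though it covered arbitrary heights); two minor corrections are that the incremental value should read $\tilde r(\gnv)=r(\gnv)\append q(\gnv)$ so that $\Vec{\tilde r}(\gnv)=\Vec r(\gnv)\append q(\gnv)$, and in the $n^p>1$ merging step the index set $\Lev_{n-1}(S)$ lives below $\gk$, so a plain intersection of the trees $T^{q(\gnv)}$ already suffices---no diagonal intersection is needed there.
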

\begin{proof}
	If $S$ is a $p_{n^p-1}$-tree  then we are done by \cref{GetPreDense1}.
	Thus assume there is $n<\gw$ such that
		$\Lev_{<n}S$ is a $p\restricted n^p-1$-tree.
	Construct the strong direct extension
			 $q(\gnv) \leq^{**} p_{\ordered{\gnv}\uparrow}$
			and the 
						$\ordered{p_{\ordered{\gnv}\uparrow}, S_{\ordered{\gnv}}}$-function
									$s_{\gnv}$
									 for each  $\gnv \in \Lev_{n-1}S$ as follows.
	For each $\gnv \in \Lev_{n-1}S$ let $s_\gnv$ be the function with domain
				 $S_{\ordered{\gnv}}$ defined by setting
		$s_\gnv(\gmv) = r(\gnv\append\gmv)$ for each $\gmv \in S_{\ordered{\gnv}}$.
	By \cref{GetPreDense1} there is a strong direct extension $q(\gnv) \leq^{**} p_{\ordered{\gnv}\uparrow}$
		such that $\setof {s_\gnv(\gmv)}{\gmv \in \Lev_{\max}(S(\gnv))}$ is
								predense below $q(\gnv)$.
Let $q \leq^{**} p_{n^p-1}$ be a strong direct extension satisfying
		$q \leq^{**} q(\gnv)$ for each $\gnv \in \Lev_{n-1}(S)$.
		Hence $\setof {s_\gnv(\gmv)}{\gmv \in \Lev_{\max}(S(\gnv))}$ is
								predense below $q$ for each $\gnv \in \Lev_{n-1}S$.
	Hence $\setof {\Vec{r}(\gnv) \append s_\gnv(\gmv)}
									{\gmv \in \Lev_{\max}S(\gnv)}$ is 
														predense below $\Vec{r}(\gnv) \append q$.
	Let $p^* \leq^{**} p$ be a strong direct extension such that
			$p^*_{n^p-1} = q$ and $p^* \restricted n^p-1 \leq^{**} p\restricted n^p-1$
					is a strong direct extension constructed by recursion so as to satisfy
							$\setof {\Vec{r}(\gnv)}{\gnv \in \Lev_{n-1}S}$ is predense
									below $p^*\restricted n^p-1$.
		Necessarily $\setof {\Vec{r}(\gnv)}{\gnv \in \Lev_{\max}S}$ is
				predense below $p^*$
\end{proof}
\subsection{Dense open sets and direct extensions}
In this subsection we prove \cref{DenseHomogen}, which is the basic tool to be used in the next section
	to analyse the properties of the cardinal $\gk$ and the 
		cardinal structure below it.

An essential obstacle in the extender based Radin forcing in comparison
	to the plain extender forcing is that while in the
		later forcig notion if we have two direct extensions $q,r\leq^* p$ then
				$q$ and $r$ are compatible,
	in the former forcing notion this does not hold.
This usually entails some inductions, taking place inside elementary
	substructures, which  construct long
	increasing seqeunce of conditions from $\PP^*_f$, which
		at the end will be combined into one conditions.
This method breaks if the elementary substructures in question are not
	closed enough (which is our case if we want to handle $\gl$
		successor of singular).
The point of \cref{DenseHomogenOneBlock}	is to show
	how we can construction a condition $p$ such that
		if a direct extension $q \leq^* p$ has some favorable circumstances
			then the condition  $p$ will suffice for this circumstances.
This will enable us to work more like in a plain Radin forcing.

So as we just pointed out,
	we aim  to prove \cref{DenseHomogenOneBlock}.
This lemma is proved by recursion with the non-recursive case being 
	\cref{DenseHomogenOneBlockCase0}.
	Since the notation in \cref{DenseHomogenOneBlock} is kind of hairy
			we present the cases $k=1$ and $k = 2$ in \cref{DenseHomogenOneBlockCase1}
				and \cref{DenseHomogenOneBlockCase2}, respectively.
	\begin{lemma} \label[lemma]{DenseHomogenOneBlockCase0}
Assume $\ordered{N, f^*}$ is a good pair and
	$D \in N$ is a dense open set.
Let $p \in \PP$ be a condition such that $f^{p_{n^p-1}} = f^*$.
If there is an extension $s \leq p \restricted n^p-1$
	 and 
		a direct extension $q \leq^*  p_{n^p-1}$ such that 
					$s \append q \in D$
	 then  there is a set $T^* \in \vE(f^*)$ such that
	 			$\ordered{f^*, T^*} \leq^{**} p_{n^p-1}$ and
			$s  \append  \ordered{f^*, T^*} \in D$.
\end{lemma}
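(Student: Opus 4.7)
The task is to upgrade the given direct-extension witness $q$ (whose support $\dom f^q$ may strictly contain $\dom f^*$) into a strong direct extension $\ordered{f^*, T^*}$, preserving $f^*$ as the $f$-part while still satisfying $s \append \ordered{f^*, T^*} \in D$. Since strong direct extensions only shrink the tree without enlarging its domain, the task reduces to locating a single set $T^* \in \vE(f^*)$ with $T^* \subseteq T^{p_{n^p-1}}$ for which the membership in $D$ persists.

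My plan is to exploit the good pair property of $\ordered{N, f^*}$: $f^*$ meets every dense open subset of $\PP^*_f$ lying in $N$. Working inside $N$, I would define an open set $E \subseteq \PP^*_f$ below $f^*$ whose elements $g$ admit some tree $T$ and a suitable lower-block extension placing $\ordered{g, T}$, prepended by that lower-block condition, in $D$. By quantifying over admissible lower-block extensions I can arrange that $E$ is definable from parameters in $N$ alone. The hypothesis, applied via the given $q$, shows $E$ is inhabited below $f^*$. To establish density of $E$, take any $g \leq^* f^*$ together with a suitable tree $T$, and invoke density of $D$ at $s \append \ordered{g, T}$ to obtain an extension in $D$. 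Writing this extension through the factorization of $\PP/p$ into a lower block (the first $n^p-1$ coordinates) and an upper block recalled in Section \ref{sec:Forcing}, I would use openness of $D$ together with a Prikry-style reduction on the lower-block factor to absorb the lower-block part back to $s$ itself, while further strengthening the upper-block $f$-part into $E$ below $g$.

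Once $E$ is dense open in $N$ below $f^*$, the good pair property yields $f^* \in E$, producing $T$ with $s \append \ordered{f^*, T} \in D$; intersecting $T$ with $T^{p_{n^p-1}}$ then gives the required $T^*$ for which $\ordered{f^*, T^*} \leq^{**} p_{n^p-1}$ holds. The principal obstacle is the density step for $E$, specifically the absorption of the lower-block part back to $s$ without leaving $D$. This rests on the interplay between the openness of $D$ and the factorization-plus-Prikry reduction for the lower-block factor, and is what distinguishes the present lemma from the weaker fact that $D$ is merely non-empty below the current condition.
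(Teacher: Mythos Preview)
Your proposal has a genuine gap at precisely the point you flag as ``the principal obstacle'': the density of $E$ below $f^*$. Take $E = \setof{g \leq^* f^*}{\exists T \in \vE(g)\ s \append \ordered{g,T} \in D}$ with the \emph{fixed} $s$. Given $g \leq^* f^*$, density of $D$ only yields some $r \leq s \append \ordered{g,T}$ with $r \in D$; writing $r = s' \append q'$ through the factorization, you have $s' \leq s$ and $q' \leq \ordered{g,T}$ (possibly via one-point extensions, not merely a direct extension). Openness of $D$ is downward, so it does not let you replace $s'$ by the weaker $s$. No ``Prikry-style reduction'' on the lower block accomplishes this either: the Prikry property produces direct extensions deciding statements, it does not move you upward from $s'$ back to $s$ while remaining in $D$. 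In fact $E$ need not be dense at all --- there may be $g \leq^* f^*$ below which every witness for membership in $D$ requires either a proper extension of $s$ or a non-direct extension of the upper block. (If instead you quantify over all $s' \leq p\restricted n^p-1$ in the definition of $E$, density becomes plausible but then $f^* \in E$ no longer yields the conclusion for the \emph{given} $s$.)

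The paper's argument avoids this entirely by a standard trick you are missing: it does \emph{not} try to show $D^\in = E$ is dense. Instead it sets $D^\incompatible = \setof{g}{\forall h \in D^\in\ g \incompatible h}$ and observes that $D^* = D^\in \cup D^\incompatible$ is dense open in $\PP^*_f$ (trivially, since any $g$ either has an extension in $D^\in$ or is incompatible with all of $D^\in$). The good pair property then gives $f^* \in D^*$. Now the hypothesis enters: since $f^q \leq^* f^*$ and $f^q \in D^\in$, the condition $f^*$ is compatible with a member of $D^\in$, hence $f^* \notin D^\incompatible$, hence $f^* \in D^\in$. This is a two-line argument once you see the $D^\incompatible$ device; your route through density of $E$ cannot be completed.
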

\begin{proof}
Assume $s \leq p \restricted n^p-1$, 
						 $q \leq^* p_{n^p-1}$, and  $s \append q \in D$.
Set $D^\in = \setof {g}
			{\exists T\in \vE(g)\  \ 
							s \append \ordered{g, T} \in D}$ and
	$D^\incompatible = \setof {g }{\forall h \in D^\in\ g \incompatible h}$.
Then 	 $D^\incompatible \in N$ is open by its definiton and
	$D^\in \in N$ is open since $D$ is open.
The set $D^* = D^\in \union D^\incompatible \in N$
			 is dense open, hence
			$f^* \in D^*$.
Since $f^* \geq f^{q} \in D^\in$ we get $f^* \notin D^\incompatible$,
	thus $f^* \in D^\in$.
\end{proof}
\begin{lemma} \label[lemma]{DenseHomogenOneBlockCase1}
Assume $\ordered{N, f^*}$ is a good pair,
	$D \in N$ is a dense open set,
					and  $p \in \PP$ is a condition such that $f^{p_{n^p-1}} = f^*$.
If there is an extension $s \leq p\restricted n^p-1$
	and $\gx < \mo(\vE)$ 
		such that
			$\setof {\gn \in T^{p_{n^p-1}}}
				{\exists q \leq^* p_{n^p-1\ordered{\gn}}\ s \append q \in D} \in E_\gx(f^*)$, then
		there is
			a $p_{n^p-1}$-tree $S$ of height one,
		 and 
				a $\ordered{p_{n^p-1},S}$-function r, such that
						for each $\ordered{\gn} \in S$,
	$s \append r(\gn) \in D$.
\end{lemma}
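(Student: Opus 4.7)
The plan is to apply \cref{DenseHomogenOneBlockCase0} fiberwise over the measure-one set supplied by the hypothesis, promoting the given $\leq^*$-extensions into $\leq^{**}$-extensions block by block. Set
\begin{align*}
X = \setof{\gn \in T^{p_{n^p-1}}}{\exists q \leq^* p_{n^p-1\ordered{\gn}}\ s \append q \in D},
\end{align*}
so $X \in E_\gx(f^*)$ by hypothesis. The tree $S$ will be taken of height one with $\Lev_0(S) = X$, shrinking to a measure-one subset if needed. For each $\gn \in X$ fix a witness $q^\gn = \ordered{q^\gn_\downarrow, q^\gn_\uparrow} \leq^* p_{n^p-1\ordered{\gn}} = \ordered{p^\gn_\downarrow, p^\gn_\uparrow}$ with $s \append q^\gn \in D$; recall $f^{p^\gn_\uparrow} = f^*_{\ordered{\gn}\uparrow}$ and $f^{p^\gn_\downarrow} = f^*_{\ordered{\gn}\downarrow}$.

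Next I would promote to $\leq^{**}$ one block at a time. For the up block, for each $\gn \in X$ I would apply \cref{DenseHomogenOneBlockCase0} to the condition $s \append q^\gn_\downarrow \append p^\gn_\uparrow$, whose last block is $p^\gn_\uparrow$ with base $f^*_{\ordered{\gn}\uparrow}$. The good pair $\ordered{N, f^*_{\ordered{\gn}\uparrow}}$ is supplied by the remark following the definition of good pair, after possibly shrinking $X$ so the requisite genericity of $f^*_{\ordered{\gn}\uparrow}$ for the dense open subsets of $\PP^*_f$ belonging to $N$ is preserved. The witness $q^\gn_\uparrow$ then produces $T^\gn_\uparrow$ with $\ordered{f^*_{\ordered{\gn}\uparrow}, T^\gn_\uparrow} \leq^{**} p^\gn_\uparrow$ and $s \append q^\gn_\downarrow \append \ordered{f^*_{\ordered{\gn}\uparrow}, T^\gn_\uparrow} \in D$. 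For the down block, apply the analogue of \cref{DenseHomogenOneBlockCase0} inside the smaller forcing housing $p^\gn_\downarrow$, working with the auxiliary dense open set $D' = \setof{r}{r \append \ordered{f^*_{\ordered{\gn}\uparrow}, T^\gn_\uparrow} \in D}$ and using a good pair $\ordered{N_\gn, f^*_{\ordered{\gn}\downarrow}}$ at the $\vbn$-level; the witness $q^\gn_\downarrow$ yields $T^\gn_\downarrow$ with $\ordered{f^*_{\ordered{\gn}\downarrow}, T^\gn_\downarrow} \leq^{**} p^\gn_\downarrow$ and $s \append \ordered{f^*_{\ordered{\gn}\downarrow}, T^\gn_\downarrow} \append \ordered{f^*_{\ordered{\gn}\uparrow}, T^\gn_\uparrow} \in D$. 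Putting $r(\gn) = \ordered{\ordered{f^*_{\ordered{\gn}\downarrow}, T^\gn_\downarrow}, \ordered{f^*_{\ordered{\gn}\uparrow}, T^\gn_\uparrow}}$ produces a $\ordered{p_{n^p-1}, S}$-function with $s \append r(\gn) \in D$ for each $\ordered{\gn} \in S$, as required.

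The main obstacle is the transfer of the good-pair hypothesis of \cref{DenseHomogenOneBlockCase0} to the modified bases $f^*_{\ordered{\gn}\uparrow}$ and $f^*_{\ordered{\gn}\downarrow}$ for $\gn$ not a priori in $N$. The remark following the definition of good pair dispatches the case $\gn \in N$ directly; for the general case one expects to shrink $X$ to a measure-one subset on which the family of dense open subsets of $\PP^*_f$ appearing in $N$ (of size $< \gl$) is still met by the modified bases, exploiting the closure $N_{\gx+1} \supseteq \leftexp{<\gk}{N_{\gx+1}}$ along the $\gk$-internal approaching of $N$. A secondary wrinkle is to arrange the auxiliary good pair for the smaller forcing coherently — one expects to work inside an appropriate restriction or relativization of $N$ to the $\vbn$-universe.
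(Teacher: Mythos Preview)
Your handling of the up block is essentially what the paper does: apply \cref{DenseHomogenOneBlockCase0} using the good pair $\ordered{N, f^*_{\ordered{\gn}\uparrow}}$ to replace $q^\gn_\uparrow$ by some $\ordered{f^*_{\ordered{\gn}\uparrow}, T(\gn)} \leq^{**} p^\gn_\uparrow$ while staying in $D$. The real problem is the down block, and your ``secondary wrinkle'' is in fact the whole difficulty.

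There is no good pair $\ordered{N_\gn, f^*_{\ordered{\gn}\downarrow}}$ available. The remark after the definition of good pair transfers genericity from $f^*$ to $f^*_{\ordered{\gn}\uparrow}$ because both live in the \emph{same} forcing $\PP^*_f(\vE,\gee)$; but $f^*_{\ordered{\gn}\downarrow}$ lives in $\PP^*_f(\vbn,\cdot)$, a different and much smaller poset, and nothing in the construction of $\ordered{N,f^*}$ gives $f^*_{\ordered{\gn}\downarrow}$ any genericity there. Relativizing $N$ does not help: the dense open subsets of $\PP^*_f(\vbn,\cdot)$ sitting in $N$ are simply not what the approaching sequence $\ordof{f_\gx}{\gx<\gk}$ was built to meet. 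So your step producing $T^\gn_\downarrow$ has no justification.

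The paper sidesteps this entirely. It does \emph{not} try to promote the down part to a $\leq^{**}$-extension of $p_{n^p-1\ordered{\gn}\downarrow}$. Instead it keeps the raw direct extensions $t(\gn) \leq^* p_{n^p-1\ordered{\gn}\downarrow}$, enlarges the base to $g = f^* \union f^{j_{E_\gx}(t)(\mc_\gx(f^*))}$ so that (on a measure-one set) $g_{\ordered{\gn}\downarrow} = f^{t(\gn\restricted\dom f^*)}$, and builds the $\ordered{p,S}$-function $r$ relative to $\ordered{g,T}$ rather than $\ordered{f^*,T}$. This shows $g \in D^\in$, where $D^\in$ is the set of bases admitting such a tree-and-function witness. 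The punchline is then a $D^\in/D^\incompatible$ dichotomy: $D^* = D^\in \cup D^\incompatible \in N$ is dense open in $\PP^*_f$, so $f^* \in D^*$ by the good-pair hypothesis, and $g \leq^* f^*$ with $g \in D^\in$ rules out $f^* \in D^\incompatible$. Hence $f^* \in D^\in$, which is exactly the statement of the lemma. The existence of $S$ and $r$ for $f^*$ is thus obtained indirectly via genericity, not by direct construction --- and this indirection is precisely what lets one avoid needing a good pair at the $\vbn$-level.
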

\begin{proof}
Assume $X =  \setof {\gn \in T^{p_{n^p-1}}}
						{\exists q \leq^* p_{n^p-1\ordered{\gn}}\ s \append q \in D}
	 \in E_\gx(f^*)$.
	 Set 
	 		\begin{multline*}
	 				D^\in = \setof {g}
					{\exists T\in \vE(g),\   
							\text{there is a $\ordered{g,T}$-tree $S$ of height one and} \\
										\text{a 	$\ordered{\ordered{g,T},S}$-function $r$ such that } 
										\forall \ordered{\gn} \in S\ 
												s \append r(\gn)\in D}
				\end{multline*}
				 and
	$D^\incompatible = \setof {g }{\forall h \in D^\in\ g \incompatible h}$.
Then 	 $D^\incompatible \in N$ is open by its definiton and
	$D^\in \in N$ is open since $D$ is open.
The set $D^* = D^\in \union D^\incompatible \in N$
			 is dense open, hence
			$f^* \in D^*$.
For each $\gn \in X$ fix 
		a direct extension $t(\gn) \leq^* p_{n^p-1\ordered{\gn}\downarrow}$,  and
			a direct extension $q(\gn) \leq^* p_{n^p-1\ordered{\gn}\uparrow}$ 
				such that $s \append t(\gn) \append q(\gn) \in D$.
Since $\ordered{N, f^*_{\ordered{\gn}\uparrow}}$ 
		is a good pair we get by the previous lemma
								 a set $T(\gn) \in \vE(f^*_{\ordered{\gn}\uparrow})$
			satisfying
				$\ordered{f^*_{\ordered{\gn}\uparrow}, T(\gn)} \leq^{**} p_{n^p-1\ordered{\gn}\uparrow}$
					and
						$s \append t(\gn) \append \ordered{f^*_{\ordered{\gn}\uparrow}, T(\gn)} \in D$.
										
Set $g = f^* \union f^{j_{E_\gx}(t)(\mc_\gx(f^*))}$.
Set $X^* = \gp^{-1}_{g,f^*}(X)$.
By removing a measure zero set from $X^*$ we can assume for each $\gn \in X^*$,
	$g_{\ordered{\gn}\downarrow} = f^{t(\gn\restricted \dom f^*)}$.
	Choose a set $T \in \vE(g)$ such that $\ordered{g,T} \leq^* p_{n^p-1}$.
	Define the funcrion $r$ with domain $X^*$ by setting for each $\gn \in X^*$,		
			$r(\gn) = \ordered{g_{\ordered{\gn}\downarrow}, T^{(t(\gn\restricted \dom f^*))} \intersect T_{\ordered{\gn}\downarrow}} \append 
			\ordered{g_{\ordered{\gn}\uparrow}, \gp^{-1}_{g,f^*}T(\gn) \intersect T}$.
			Note $r(\gn) \leq^{**} \ordered{g, T}_{\ordered{\gn}}$, thus
					$r$ is a $\ordered{g,X^*}$-function.
Since $D$	 is open we get for each $\gn \in X^*$,
		$s \append r(\gn) \in D$.
Thus $g \in D^\in$.
Since $g \leq f^* \in D^*$ we get $f^* \in D^\in$.
\end{proof}
\begin{lemma} \label[lemma]{DenseHomogenOneBlockCase2}
Assume $\ordered{N, f^*}$ is a good pair,
				$D \in N$ is a dense open set,
						and $p \in \PP$ is a condition such that $f^{p_{n^p-1}} = f^*$.
If there is $s \leq p\restricted n^p-1$ such that
	$\setof {\ordered{\gn_0, \gn_1} \in \leftexp{2}{T}^{p_{n^p-1}}}
					{\exists q \leq^* p_{n^p-1\ordered{\gn_0, \gn_1}}\ s \append q \in D}$
					is an $\vE(f^*)$-tree, then there is
			a $p_{n-1}$-tree $S$ of height two,
				and a $\ordered{p_{n^p-1},S}$-function r such that
			for each $\ordered{\gn_0, \gn_1} \in S$, 
	$s \append \Vec{r}(\gn_0, \gn_1) \in D$.
\end{lemma}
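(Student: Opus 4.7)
The plan is to parallel the proof of \cref{DenseHomogenOneBlockCase1}, building a height-two tree rather than a height-one tree. Define
\begin{align*}
D^\in &= \{ g :\ \exists T \in \vE(g),\ \exists\ \ordered{g,T}\text{-tree }S \text{ of height two and a}\\
&\qquad \ordered{\ordered{g,T},S}\text{-function } r,\ \forall \ordered{\gn_0,\gn_1} \in \Lev_{\max} S\ s \append \Vec{r}(\gn_0,\gn_1) \in D \}
\end{align*}
and $D^\incompatible = \setof{g}{\forall h \in D^\in\ g \incompatible h}$. Both sets lie in $N$ and are open in $\PP^*_f$, so $D^* = D^\in \union D^\incompatible \in N$ is dense open. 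The good-pair property of $\ordered{N, f^*}$ gives $f^* \in D^*$, and it will suffice to exhibit $g \leq^* f^*$ with $g \in D^\in$, whence $f^* \notin D^\incompatible$ and thus $f^* \in D^\in$.

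To build such a $g$, I first reduce the height-two problem to height-one slicewise. For each $\ordered{\gn_0, \gn_1} \in Y$ fix $q(\gn_0,\gn_1) \leq^* p_{n^p-1\ordered{\gn_0,\gn_1}}$ with $s \append q(\gn_0,\gn_1) \in D$, and decompose via the factorization of $p_{n^p-1\ordered{\gn_0,\gn_1}}$ into $q(\gn_0,\gn_1) = t_0(\gn_0,\gn_1) \append t_1(\gn_0,\gn_1)$, with $t_0(\gn_0,\gn_1) \leq^* p_{n^p-1\ordered{\gn_0}\downarrow}$ and $t_1(\gn_0,\gn_1) \leq^* (p_{n^p-1\ordered{\gn_0}\uparrow})_{\ordered{\gn_1}}$. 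By $\gk$-completeness of the ultrafilter witnessing $\Suc_Y(\gn_0) \in \vE(f^*)$, after shrinking $\Suc_Y(\gn_0)$ to a measure-one subset I may assume $t_0(\gn_0,\gn_1)$ depends only on $\gn_0$; write the common value as $t_0(\gn_0)$. Now for each $\gn_0 \in \Lev_0 Y$ apply \cref{DenseHomogenOneBlockCase1} to the condition $p_{n^p-1\ordered{\gn_0}\uparrow}$, with the extension $s \append t_0(\gn_0)$ and the good pair $\ordered{N, f^*_{\ordered{\gn_0}\uparrow}}$ inherited as in the proof of \cref{DenseHomogenOneBlockCase1}. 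This yields a $p_{n^p-1\ordered{\gn_0}\uparrow}$-tree $S(\gn_0)$ of height one together with a function $r(\gn_0,\cdot)$ satisfying $s \append t_0(\gn_0) \append r(\gn_0,\gn_1) \in D$ for each $\gn_1 \in S(\gn_0)$.

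The second stage amalgamates these slicewise data into a single condition, following the ultrapower trick of \cref{DenseHomogenOneBlockCase1}. Let $\gx < \mo(\vE)$ satisfy $\Lev_0 Y \in E_\gx(f^*)$ and set $g = f^* \union f^{j_{E_\gx}(t_0)(\mc_\gx(f^*))}$. After removing a measure-zero subset from $X^* = \gp^{-1}_{g,f^*}(\Lev_0 Y)$, I may assume $g_{\ordered{\gn_0}\downarrow} = f^{t_0(\gn_0\restricted \dom f^*)}$ for each $\gn_0 \in X^*$. Choose a tree $T^g \in \vE(g)$ refining $\gp^{-1}_{g,f^*}T^{p_{n^p-1}}$ and, through a diagonal intersection over $\gn_0$ combined with an inner diagonal intersection over $\gn_1$, also refining the pullbacks of the trees from each $r(\gn_0,\gn_1)$. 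Assemble the shifted $S(\gn_0)$'s into an $\vE(g)$-tree $S$ of height two over $\OB(\dom g)$, and define $r$ on $S$ by lifting $t_0$ at the first level and $r(\gn_0,\cdot)$ at the second level. Openness of $D$ and the definitions give that $\ordered{g, T^g}$ together with $S$ and $r$ witnesses $g \in D^\in$, so $f^* \in D^\in$.

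The principal obstacle is the two-level bookkeeping in the amalgamation: verifying that the nested diagonal intersections yield a genuine height-two $\vE(g)$-tree, that the lifted $r$ is truly a $\ordered{\ordered{g,T^g},S}$-function (so each $\Vec{r}(\gn_0,\gn_1) \leq^{**} \ordered{g,T^g}_{\ordered{\gn_0,\gn_1}}$), and that the good-pair inheritance from $\ordered{N,f^*}$ to $\ordered{N,f^*_{\ordered{\gn_0}\uparrow}}$ is sound uniformly in $\gn_0$. All three are handled by (careful) extensions of the bookkeeping in Case 1, using the Mitchell increasing structure of $\vE$ to coordinate the two ultrafilter levels.
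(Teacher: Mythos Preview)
Your proposal is correct and follows essentially the same approach as the paper's proof: both set up the dense-open trichotomy $D^\in \cup D^\incompatible$, stabilize the lower piece $t_0(\gn_0,\gn_1)$ (the paper's $t(\gn_0,\gn_1)$) to depend only on $\gn_0$ by shrinking $\Suc_Y(\gn_0)$, invoke \cref{DenseHomogenOneBlockCase1} slicewise using the inherited good pair $\ordered{N, f^*_{\ordered{\gn_0}\uparrow}}$, and then amalgamate via $g = f^* \cup f^{j_{E_\gx}(t_0)(\mc_\gx(f^*))}$ to witness $g \in D^\in$ and hence $f^* \in D^\in$. The only differences are notational and in the level of detail you give for the final tree-shrinking bookkeeping, which the paper also treats somewhat tersely.
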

\begin{proof}
Assume $X =  \setof {\ordered{\gn_0, \gn_1} \in \leftexp{2}{T}^{p^*_{n^p-1}}}
										{\exists s \leq p\restricted n^p-1\ \exists q \leq^* p_{n^p-1\ordered{\gn_0, \gn_1}}\ q \in D}$ is
												an $\vE(f^*)$-tree.
												 Set 
	 		\begin{multline*}
	 				D^\in = \setof {g}
					{\exists T\in \vE(g),\  
							\text{there is a $\ordered{g,T}$-tree $S$ of height two and} \\
										\text{a 	$\ordered{\ordered{g,T},S}$-function $r$ such that }\\ 
										\forall \ordered{\gn_0, \gn_1} \in S\ 
												s \append \Vec{r}(\gn_0, \gn_1)\in D}
				\end{multline*}
				 and
	$D^\incompatible = \setof {g }{\forall h \in D^\in\ g \incompatible h}$.
Then 	 $D^\incompatible \in N$ is open by its definiton and
	$D^\in \in N$ is open since $D$ is open.
The set $D^* = D^\in \union D^\incompatible \in N$
			 is dense open, hence
			$f^* \in D^*$.
For each $\ordered{\gn_0, \gn_1} \in X$ fix 
			a direct extension  
					$t(\gn_0, \gn_1) \leq^* p_{n^p-1\ordered{\gn_0}\downarrow }$,
					a direct extension  
					$t_0(\gn_0, \gn_1) \leq^* p_{n^p-1\ordered{\gn_0}\uparrow\ordered{\gn_1}\downarrow}$,
							and a direct extension 
						$q(\gn_0, \gn_1) \leq^* p_{n^p-1\ordered{\gn_0, \gn_1}\uparrow}$ 
				such that $s \append t(\gn_0, \gn_1) \append 
										t_0(\gn_0, \gn_1) \append q(\gn_0, \gn_1) \in D$.
					For each $\gn_0 \in \Lev_0(X)$ we can remove a measure zero set from
								$\Suc_X(\gn_0)$ so that we can assume
											there is a direct extension 
															$t(\gn_0) \leq^* p^*_{n^p-1\ordered{\gn_0} \downarrow}$
											such that
															$t(\gn_0) = t(\gn_0, \gn_1)$ for each $\gn_1 \in \Suc_X(\gn_0)$.
				By the previous lemma there is 
					a $p_{n^p-1\ordered{\gn_0}\uparrow}$-tree $S(\gn_0)$ of height one,
						and a $\ordered{p_{n^p-1\ordered{\gn_0}\uparrow}, S(\gn_0)}$-function $r_{\gn_0}$
								satisfying for each $\gn_1 \in S(\gn_0)$,
						$s \append t(\gn_0) \append r_{\gn_0}(\gn_1) \in D$.
						
Set $g = f^* \union f^{j_{E_\gx}(t)(\mc_\gx(f^*))}$, where
		$\Lev_0(X) \in E_\gx(f^*)$.
Set $X^* = \setof {\ordered{\gn_0, \gn_1}}
						{\gn_0 \in \gp^{-1}_{g,f^*}\Lev_0(X), \ 
											\gn_1 \in \gp^{-1}_{g,f^*}S(\gn_0\restricted \dom f^*)}$.
By removing a measure zero set from $\Lev_0(X^*)$ we can assume for each $\gn_0 \in X^*$,
	$g_{\ordered{\gn_0}\downarrow} = f^{t(\gn_0\restricted \dom f^*)}$.
	Choose a set $T \in \vE(g)$ such that $\ordered{g,T} \leq^* p^*_{n^p-1}$.
	For each $\gn_0 \in \Lev_0(X^*)$ let $r'_{\gn_0}$ be
		the function with domain $\Suc_{X^*}(\gn_0)$ defined by shrinking the trees
				in $r_{\gn_0}$ so that both
					$r'_{\gn_0}(\gn_1) \leq^{**} 
									r_{\gn_0\restricted \dom f^*}(\gn_1 \restricted \dom f^*)$ and
							$r'_{\gn_0}(\gn_1) \leq^{**} \ordered{g,T}_{\ordered{\gn_0}\uparrow\ordered{\gn_1}}$
									will hold for each $\gn_1 \in \Suc_{X^*}(\gn_0)$.
	Define the function $r$ with domain $X^*$ by setting 
					for each $\ordered{\gn_0, \gn_1} \in X^*$,		
									$r(\gn_0) = \ordered{g_{\ordered{\gn_0}\downarrow},
												 T_{\ordered{\gn_0}\downarrow} \intersect
												 				\gp^{-1}_{g_{\ordered{\gn_0}\downarrow},f^{t(\gn_0\restricted \dom f^*)}}T^{(t(\gn_0  \restricted \dom f^*))}
												 											}$
							and
								$r(\gn_0, \gn_1) =
														 	r'_{\gn_0}
														 			(\gn_1)$.

			Note $\Vec{r}(\gn_0, \gn_1) \leq^{**} \ordered{g, T}_{\ordered{\gn_0, \gn_1}}$, thus
					$r$ is a $\ordered{g,X^*}$-function.
Since $D$	 is open we get
		$s \append \Vec{r}(\gn_0, \gn_1) \in D$ for each $\ordered{\gn_0, \gn_1} \in X^*$.
Thus $g \in D^\in$.
Since $g \leq f^* \in D^*$ we get $f^* \in D^\in$.
\end{proof}
As discussed earlier, the following lemma is the intended one, with the previous
	ones serving as an introduction to the technique used in the proof.
\begin{lemma} \label[lemma]{DenseHomogenOneBlock}
Assume $\ordered{N, f^*}$ is a good pair,
					 $k<\gw$,
							$D \in N$ is a dense open set,
								and $p \in \PP$ is a condition such that $f^{p_{n^p-1}} = f^*$.
If there is $s \leq p\restricted n^p-1$ such that
	$\setof {\gnv \in \leftexp{k}{T}^{p_{n^p-1}}}
					{	\exists q \leq^* p_{n^p-1\ordered{\gnv}}\ s \append q \in D}$
					is an $\vE(f^*)$-tree,
	then 		there is 
	a $p_{n^p-1}$-tree $S$ of height $k$,
			and a $\ordered{p_{n^p-1},S}$-function r such that
			for each $\gnv \in \Lev_{\max}S$, 
	$s \append \Vec{r}(\gnv) \in D$.
\end{lemma}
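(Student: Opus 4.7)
The plan is to argue by induction on $k$, treating \cref{DenseHomogenOneBlockCase1} and \cref{DenseHomogenOneBlockCase2} as the base cases and the template for the inductive step. The overall trichotomy will be identical to the one used in those lemmas. I would let $D^\in$ consist of those $g \in \PP^*_f$ with $g \leq f^{p_{n^p-1}}$ for which there exist $T \in \vE(g)$, a $\ordered{g,T}$-tree $S$ of height $k$, and a $\ordered{\ordered{g,T},S}$-function $r$ with $s \append \Vec{r}(\gnv) \in D$ for every $\gnv \in \Lev_{\max}S$; let $D^\incompatible$ collect those $g$ incompatible with every element of $D^\in$; and set $D^* = D^\in \union D^\incompatible$. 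Both $D^\in$ and $D^\incompatible$ lie in $N$ and are open, so $D^*$ is dense open in $N$ and $f^* \in D^*$. Hence it suffices to exhibit $g \leq f^*$ with $g \in D^\in$, which forces $f^* \in D^\in$.

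The inductive step will follow the $k=2$ template closely. Given the hypothesized $\vE(f^*)$-tree $X$ of $\gnv \in \leftexp{k}{T}^{p_{n^p-1}}$ admitting witnesses $q(\gnv) \leq^* p_{n^p-1\ordered{\gnv}}$ with $s \append q(\gnv) \in D$, I would decompose each $q(\gnv)$ along its first coordinate $\gn_0$: extract a direct extension $t(\gnv) \leq^* p_{n^p-1\ordered{\gn_0}\downarrow}$ and repackage the remaining $k-1$ coordinates as a direct extension of $p_{n^p-1\ordered{\gn_0}\uparrow\ordered{\gn_1, \dotsc, \gn_{k-1}}}$. By shrinking $\Suc_X(\gn_0)$ by a measure-zero set for each $\gn_0 \in \Lev_0(X)$, I arrange that $t(\gnv)$ depends only on $\gn_0$; call it $t(\gn_0)$. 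The residual data then yields, for each $\gn_0 \in \Lev_0(X)$, an $\vE(f^*)$-tree of height $k-1$ of sequences $\ordered{\gn_1, \dotsc, \gn_{k-1}}$ each admitting a direct extension of $p_{n^p-1\ordered{\gn_0}\uparrow\ordered{\gn_1, \dotsc, \gn_{k-1}}}$ whose concatenation with $s \append t(\gn_0)$ lies in $D$. Because the good-pair property descends under $\uparrow$ (by the remark following \cref{GetGoodPair}), I can apply the inductive hypothesis to $\ordered{N, f^*_{\ordered{\gn_0}\uparrow}}$, producing a $p_{n^p-1\ordered{\gn_0}\uparrow}$-tree $S(\gn_0)$ of height $k-1$ and a $\ordered{p_{n^p-1\ordered{\gn_0}\uparrow}, S(\gn_0)}$-function $r_{\gn_0}$ with $s \append t(\gn_0) \append \Vec{r}_{\gn_0}(\gmv) \in D$ for every $\gmv \in \Lev_{\max}S(\gn_0)$.

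The amalgamation is exactly as in \cref{DenseHomogenOneBlockCase2}: pick $\gx < \mo(\vE)$ with $\Lev_0(X) \in E_\gx(f^*)$, set $g = f^* \union f^{j_{E_\gx}(t)(\mc_\gx(f^*))}$, and thin $\Lev_0(X)$ to a measure-one set $X^*$ on which $g_{\ordered{\gn_0}\downarrow} = f^{t(\gn_0\restricted \dom f^*)}$. Choose $T \in \vE(g)$ with $\ordered{g,T} \leq^* p_{n^p-1}$, then shrink each $r_{\gn_0}$ so its trees pull back inside $T$ and satisfy the $\leq^{**}$-compatibility demanded by the definition of a $\ordered{\ordered{g,T}, X^*}$-function. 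The resulting height-$k$ tree $X^*$, together with the function $r$ whose first level is $\ordered{g_{\ordered{\gn_0}\downarrow},\, T_{\ordered{\gn_0}\downarrow} \intersect \gp^{-1}_{g_{\ordered{\gn_0}\downarrow}, f^{t(\gn_0\restricted \dom f^*)}} T^{(t(\gn_0\restricted \dom f^*))}}$ and whose higher levels come from the pulled-back $r_{\gn_0}$'s, witnesses $g \in D^\in$ by openness of $D$. Since $g \leq f^* \in D^*$, we conclude $f^* \in D^\in$. The main obstacle is the coherent bookkeeping at the gluing stage: the fiberwise trees $S(\gn_0)$ and functions $r_{\gn_0}$ must be uniformly pulled back along $\gp^{-1}_{g,f^*}$ and compatibly shrunk against $T$, and the collapse of $t(\gnv)$ to a function of $\gn_0$ alone rests on normality of $\vE(f^*)$ via the diagonal intersection of \cref{DiagonalIsBig} applied fiberwise.
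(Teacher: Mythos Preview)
Your proposal is correct and follows essentially the same approach as the paper's own proof: induction on $k$, peel off the first coordinate, stabilize the lower part $t$ to depend only on that coordinate, apply the inductive hypothesis to the good pair $\ordered{N, f^*_{\ordered{\gn_0}\uparrow}}$ to get $S(\gn_0)$ and $r_{\gn_0}$, then amalgamate via $g = f^* \union f^{j_{E_\gx}(t)(\mc_\gx(f^*))}$ and conclude $f^* \in D^\in$ through the $D^\in/D^\incompatible$ dichotomy. Two minor remarks: the stabilization of $t(\gnv)$ to $t(\gn_0)$ requires shrinking the entire subtree $X_{\ordered{\gn_0}}$ (not just $\Suc_X(\gn_0)$), and the justification is $\gk$-completeness (there are fewer than $\gk$ possible values of $t(\gn_0)$ since it lives in a small forcing) rather than the diagonal intersection of \cref{DiagonalIsBig}; otherwise your outline matches the paper's argument step for step.
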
\begin{proof}
Assume $X =  \setof {\ordered{\gm} \append \gnv \in \leftexp{k}{T}{p_{n^p-1}}}
										{\exists q \leq^* p_{n^p-1\ordered{\ordered{\gm}\append \gnv}}\ s \append q \in D}$ is
												an $\vE(f^*)$-tree.
For each $\ordered{\gm} \append \gnv \in X$ fix a direct extension 
						$t(\gm \append \ordered{\gnv}) \leq^*  p_{n^p-1\ordered{\gm}\downarrow}$
							and a direct extension
						$q(\gm \append \gnv) \leq^* p_{n^p-1\ordered{\gm}\uparrow\ordered{ \gnv}}$ 
				such that $s \append t(\ordered{\gm} \append \gnv)\append q(\ordered{\gm} \append \gnv) \in D$.
					For each $\gm \in \Lev_0(X)$ we can remove a measure zero set from
								$X_{\ordered{\gm}}$ so that we will have a direct extension
											$t(\gm) \leq^* p_{n^p-1\ordered{\gm}\downarrow}$ such that
															$t(\gm) = t(\ordered{\gm} \append \gnv)$
																		 for each $\gnv \in X_{\ordered{\gm}}$.
				By recursion there is 
					a $p_{n^p-1\ordered{\gm}\uparrow}$-tree $S(\gm)$ of height $k-1$,
						and a $\ordered{p_{n^p-1\ordered{\gm}\uparrow}, S(\gm)}$-function $r_{\gm}$
								satisfying for each $\gnv \in S(\gm)$,
						$s \append t(\gm) \append r_{\gm}(\gnv) \in D$.
										
Set $g = f^* \union f^{j_{E_\gx}(t)(\mc_\gx(f^*))}$, where
		$\Lev_0(X) \in E_\gx(f^*)$.
Set $X^* = \setof {\ordered{\gm} \append \gnv}
						{\gm \in \gp^{-1}_{g,f^*}\Lev_0(X), \ 
											\gnv \in \gp^{-1}_{g,f^*}(S(\gm\restricted \dom f^*))}$.
By removing a measure zero set from $\Lev_0(X^*)$ we can assume for each $\gm \in X^*$,
	$g_{\ordered{\gm}\downarrow} = f^{t(\gm \restricted \dom f^*)}$.
	Choose a set $T \in \vE(g)$ such that $\ordered{g,T} \leq^* p_{n^p-1}$.
	For each $\gm \in \Lev_0(X^*)$ let $r'_{\gn_0}$ be
		the function with domain $X^*_{\ordered{\gm}}$ defined by shrinking the trees
				in $r_{\gm}$ so that both
					$\Vec{r}'_{\gm}(\gnv) \leq^{**} 
									\Vec{r}_{\gm\restricted \dom f^*}(\gnv \restricted X_{\ordered{\gm} \restricted \dom f^*})$ and
							$r'_{\gm}(\gnv) \leq^{**} \ordered{g,T}_{\ordered{\gm}\uparrow\ordered{\gnv}}$
									will hold for each $\gnv \in X^*_{\ordered{\gm}}$.
	Define the function $r$ with domain $X^*$ by setting 
					for each $\ordered{\gm} \append \gnv \in X^*$,		
									$r(\gm) = \ordered{g_{\ordered{\gm}\downarrow},
												 T_{\ordered{\gm}\downarrow} \intersect
												 				\gp^{-1}_
												 					{g_{\ordered{\gm}\downarrow},f^{t(\gm\restricted \dom f^*)}
												 														}T^{(t(\gm  \restricted \dom f^*))}
												 											}$
							and
								$r(\ordered{\gm} \append \gnv) =
														 	r'_{\gm}
														 			(\gnv)$.

			Note $\Vec{r}(\ordered{\gm} \append  \gnv) \leq^{**} 
						\ordered{g, T}_{\ordered{\ordered{\gm} \append \gnv}}$, thus
					$r$ is a $\ordered{g, X^*}$-function.
Since $D$	 is open we get
		$s \append \Vec{r}(\ordered{\gm} \append \gnv) \in D$ for each 
							$\ordered{\ordered{\gm} \append \gnv} \in X^*$.
Thus $g \in D^\in$.
Since $g \leq f^* \in D^*$ we get $f^* \in D^\in$.
\end{proof}
	\begin{lemma}
		Assume $\ordered{f, T} \in \PP$ is a condition,
				$k < \gw$,
				 and $S \subseteq \leftexp{k}{T}$
			is not an $\vE(f)$-tree.
			Then there is a set $T^* \in \vE(f)$ such that $\ordered{f, T^*} \leq^* \ordered{f, T}$ and
						$\leftexp{k}{T}^{*} \intersect S = \emptyset$.
	\end{lemma}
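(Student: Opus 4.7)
I will proceed by induction on $k$. The base case $k=1$ is immediate: a set $S\subseteq T$ fails to be an $\vE(f)$-tree exactly when $S\notin E_\gx(f)$ for every $\gx<\mo(\vE)$; hence $T\setminus S\in E_\gx(f)$ for every $\gx$ (using $T\in\vE(f)$), so $T^*:=T\setminus S\in\vE(f)$ and $\leftexp{1}{T^*}\intersect S=\emptyset$ trivially.

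For the inductive step, fix $k\geq2$ and $S\subseteq\leftexp{k}{T}$. For each $\gn\in T$ consider the section $S_{\ordered{\gn}}\subseteq\leftexp{k-1}{T}$, and set
\[
A=\setof{\gn\in T}{S_{\ordered{\gn}}\text{ is an }\vE(f)\text{-tree}}.
\]
The first step is to show $A\notin E_\gx(f)$ for any $\gx$. If, on the contrary, $A\in E_{\gx_0}(f)$ for some $\gx_0$, one exhibits an $\vE(f)$-subtree $S'\subseteq S$ of height $k$ as follows: take $\Lev_0(S')=A$ (which meets condition~(1) via $E_{\gx_0}(f)$) and, for each $\gn\in A$, graft the given $\vE(f)$-tree $S_{\ordered{\gn}}$ above $\gn$. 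Condition~(2) then holds at every non-maximal node of $S'$ because each $S_{\ordered{\gn}}$ is an $\vE(f)$-tree; this contradicts the hypothesis that $S$ is not an $\vE(f)$-tree (read here as: $S$ contains no $\vE(f)$-subtree of height $k$).

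Granted $T\setminus A\in\vE(f)$, for each $\gn\in T\setminus A$ the section $S_{\ordered{\gn}}$ is not an $\vE(f)$-tree, so the inductive hypothesis supplies $T^*_\gn\in\vE(f)$ with $\leftexp{k-1}{T^*_\gn}\intersect S_{\ordered{\gn}}=\emptyset$. Define
\[
T^*=(T\setminus A)\intersect\dintersect_{\gn\in T\setminus A}T^*_\gn,
\]
which lies in $\vE(f)$ by \cref{DiagonalIsBig}. For any $\ordered{\gn_0,\dotsc,\gn_{k-1}}\in\leftexp{k}{T^*}$ we have $\gn_0\in T\setminus A$, and the diagonal intersection forces $\gn_1,\dotsc,\gn_{k-1}\in T^*_{\gn_0}$ since each exceeds $\gn_0$. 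Hence $\ordered{\gn_1,\dotsc,\gn_{k-1}}\in\leftexp{k-1}{T^*_{\gn_0}}$, which is disjoint from $S_{\ordered{\gn_0}}$, so $\ordered{\gn_0,\dotsc,\gn_{k-1}}\notin S$, as needed.

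The delicate step is the first part of the inductive case: interpreting ``not an $\vE(f)$-tree'' strongly enough that measure-one-many $\vE(f)$-tree sections genuinely contradict the hypothesis, and verifying that the grafted subtree $S'$ inherits condition~(2) at every level (not only level zero) from the tree structure on the sections. Once this is in place, the remainder is a clean combination of the inductive hypothesis with the diagonal intersection from \cref{DiagonalIsBig}.
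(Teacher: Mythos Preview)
Your proof is correct and takes a genuinely different route from the paper's. The paper does not induct on $k$; instead it prunes $S$ level by level until it locates a single level $n<k$ at which \emph{every} node has a successor set missing all the $E_\gx(f)$, then invokes \cref{GetPreDense} to make the level-$n$ part of $S$ predense and finishes with one application of the \emph{generalized} (tree-indexed) diagonal intersection over $\Lev_n(S)$. Your argument replaces this with a clean recursion: peel off level~$0$, apply the inductive hypothesis to each section, and combine via the ordinary (single-index) diagonal intersection of \cref{DiagonalIsBig}. Your approach is more elementary in that it avoids both the predensity lemma and the multi-index diagonal intersection; the paper's approach has the minor advantage of pinpointing explicitly where the $\vE(f)$-tree property fails.

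You are also right to flag the interpretive point: both proofs need ``$S$ is not an $\vE(f)$-tree'' to be read as ``$S$ contains no $\vE(f)$-subtree of height $k$''. Under the literal reading of the definition the lemma is simply false (take $S$ to be $\leftexp{k}{T}$ with a single successor set mutilated), and the paper's own pruning step silently adopts the stronger reading as well. So your caveat is not a gap in your argument but a necessary clarification of the statement itself.
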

	\begin{proof}
	 By removing measure zero sets from the levels of $S$ we can find $n < k$ so that the following will hold:
	 \begin{enumerate}
	 \item
		For each $l < n$ and $\ordered{\gn_0, \dotsc, \gn_{l-1}} \in S$,
				$\Suc_S(\gn_0, \dotsc, \gn_l) \in E_\gx(f)$
						for some $\gx < \mo(\vE)$.
	\item
		For each $\ordered{\gn_0, \dotsc, \gn_{n-1}} \in S$,
							$\Suc_S(\gn_0, \dotsc, \gn_{n-1}) \notin E_\gx(f)$
										for each $\gx  < \mo(\vE)$.
	\end{enumerate}
	Shrink $T$ so that 
			$\setof{\ordered{f,T}_{\ordered{\gn_0, \dotsc, \gn_{n-1}}}}
								{\ordered{\gn_0, \dotsc, \gn_{n-1}} \in \Lev_n(S)}$
								 is predense below   $\ordered{f,T}$.
		We are done by setting 
				$A = \dintersect \setof {T \setminus \Suc_{S_n}(\gn_0, \dotsc, \gn_{n-1})	}
						{\ordered{\gn_0, \dotsc, \gn_{n-1}} \in S_n}$
			and $T^* = T \intersect A$.
	\end{proof}
	\begin{corollary}
Assume $\ordered{N, f^*}$ is a good pair,
	$D \in N$ is a dense open set,
		and $p \in \PP$ is a condition such that $f^{p_{n^p-1}} = f^*$.
Assume $s \leq p \restricted n^p-1$.
Then one and only one of the following holds:
\begin{enumerate}
\item
	There is a $p_{n^p-1}$-tree S,
		and
			a $\ordered{p_{n^p-1}, S}$-function $r$, such that
					for each $\gnv \in \Lev_{\max} S$,
										$s  \append \Vec{r}(\gnv) \in D$.
\item
	There is a set $T^* \in \vE(f^*)$ such that
			$\ordered{f^*, T^*} \leq^{**} p_{n^p-1}$ and
		for each $\gnv \in \leftexp{<\gw}{T}^*$ and
				$q \leq^* \ordered{f^*, T^*}_{\ordered{\gnv}}$,
						$s \append q \notin D$.
	\end{enumerate}
\end{corollary}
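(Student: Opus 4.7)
The plan is to dichotomize according to whether the natural set of witnesses is ``large''. For each $k < \gw$ let
\[
	X_k = \setof {\gnv \in \leftexp{k}{T}^{p_{n^p-1}}}
						{\exists q \leq^* p_{n^p-1\ordered{\gnv}},\ s \append q \in D}.
\]
If some $X_k$ happens to be an $\vE(f^*)$-tree, I would invoke \cref{DenseHomogenOneBlock} at that $k$ to extract the $p_{n^p-1}$-tree $S$ of height $k$ and the $\ordered{p_{n^p-1},S}$-function $r$ required by clause~(1).

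Otherwise no $X_k$ is an $\vE(f^*)$-tree, and the preceding lemma supplies, for every $k < \gw$, a set $T_k^* \in \vE(f^*)$ with $\ordered{f^*, T_k^*} \leq^{**} p_{n^p-1}$ and $\leftexp{k}{T_k^*} \intersect X_k = \emptyset$. Taking $T^* = \bigintersect_{k<\gw} T_k^*$, the $\gk$-completeness of $\vE(f^*)$ together with $\gw < \gk$ gives $T^* \in \vE(f^*)$, and hence $\ordered{f^*, T^*} \leq^{**} p_{n^p-1}$. Any $\gnv \in \leftexp{<\gw}{T^*}$ of length $k$ lies in $\leftexp{k}{T_k^*}$, and so outside $X_k$; in particular no $q \leq^* \ordered{f^*, T^*}_{\ordered{\gnv}}$ can satisfy $s \append q \in D$, yielding clause~(2).

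To see the two clauses are incompatible, suppose both hold with $S, r$ witnessing~(1) and $T^*$ witnessing~(2). Shrink $S$ to $S'$ by intersecting each successor level with $T^*$: because every successor level of $S$ is in some $E_\gx(f^*)$ and $T^* \in \vE(f^*) \subseteq E_\gx(f^*)$, the intersection is again $E_\gx(f^*)$-measure one, so $S'$ remains an $\vE(f^*)$-tree. Choosing any $\gnv \in \Lev_{\max} S'$ of length $k$, the direct extension $\Vec r(\gnv) \leq^{**} p_{n^p-1\ordered{\gnv}}$ witnesses $\gnv \in X_k$, while simultaneously $\gnv \in \leftexp{k}{T^*} \subseteq \leftexp{k}{T_k^*}$, contradicting the construction.

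The main obstacle is keeping the $\leq^*/\leq^{**}$ bookkeeping straight across the $\downarrow/\uparrow$ decomposition; specifically one has to notice that any $q \leq^* \ordered{f^*, T^*}_{\ordered{\gnv}}$ is automatically a direct extension of $p_{n^p-1\ordered{\gnv}}$, which is immediate from $\ordered{f^*, T^*} \leq^{**} p_{n^p-1}$ together with the fact that the plug-in operation $_{\ordered{\gnv}}$ is monotone with respect to $\leq^{**}$.
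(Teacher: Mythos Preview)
Your dichotomy argument is exactly the one the paper intends: the corollary is stated without proof precisely because it follows by combining \cref{DenseHomogenOneBlock} (when some $X_k$ is an $\vE(f^*)$-tree) with the lemma immediately preceding it (when none is), and then intersecting the countably many $T_k^*$'s using the $\gk$-completeness of $\vE(f^*)$. Your last paragraph correctly isolates the monotonicity of the operation $(\,\cdot\,)_{\ordered{\gnv}}$ under $\leq^{**}$, which is what makes $q \leq^* \ordered{f^*,T^*}_{\ordered{\gnv}}$ imply $q \leq^* p_{n^p-1\ordered{\gnv}}$ and hence $\gnv \in X_k$ would be witnessed if $s \append q \in D$.

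The only slip is in the mutual-exclusion paragraph: you start with an \emph{arbitrary} $T^*$ witnessing clause~(2) but then write $\leftexp{k}{T^*} \subseteq \leftexp{k}{T_k^*}$ and appeal to ``the construction''. There is no reason the arbitrary witness $T^*$ sits inside the particular $T_k^*$ you built earlier. The contradiction should be derived directly from clause~(2) itself: after shrinking $S$ into $T^*$ and picking $\gnv \in \Lev_{\max}S'$, observe that $\Vec{r}(\gnv)$ and $\ordered{f^*,T^*}_{\ordered{\gnv}}$ are both $\leq^{**}$-extensions of $p_{n^p-1\ordered{\gnv}}$, hence have a common $\leq^{**}$-extension $q$; then $s \append q \in D$ by openness (since $q \leq^{**} \Vec{r}(\gnv)$) while $q \leq^* \ordered{f^*,T^*}_{\ordered{\gnv}}$, contradicting~(2). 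With this local fix the proof is complete.
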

	It is about time we get rid of the conditional appearing in
		the former statements  show we
	have densely many times $p_{n^p-1}$-trees and functions.
\begin{claim}
Assume 	$D \in N$ is a dense open set,
		and $p \in \PP$ is a condition.
Then there is an extension $s \leq p \restricted n^p-1$ and $k < \gw$ so that
				$\setof {\gnv \in \leftexp{k}{T}^{p_{n^p-1}}}
						{\exists q\leq^* p_{n^p-1\ordered{\gnv}}\ s \append q \in D}$
						is an $\vE(f^p)$-tree.
\end{claim}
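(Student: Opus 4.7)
The plan is to exploit density of $D$ to produce a single extension of $p$ inside $D$, extract from its order-theoretic unpacking a single tuple witnessing non-emptiness of the set in the claim's conclusion, and then invoke the dichotomy corollary immediately preceding this claim to upgrade this single witness to the full $\vE(f^p)$-tree condition.

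First, since $D$ is dense open and $p \in \PP$, pick any $r \leq_\PP p$ with $r \in D$. Unpacking \cref{RealOrder} at the top block index $i = n^p - 1$ (using that $\vE^r_{n^r-1} = \vE^p_{n^p-1}$), there are $j_0 < j_1 = n^r - 1$, a tuple $\gnv = \ordered{\gn_0, \dotsc, \gn_{k-1}} \in \bT^{p_{n^p-1}}$ of some length $k$, and blocks $\ordered{r_{j_0+1}, \dotsc, r_{n^r-1}} \leq^* p_{n^p-1\ordered{\gnv}}$. Setting $s = \ordered{r_0, \dotsc, r_{j_0}}$, the remaining clauses of the order relation applied at $i < n^p - 1$ show $s \leq p \restricted n^p - 1$, while $q = \ordered{r_{j_0+1}, \dotsc, r_{n^r-1}}$ satisfies $q \leq^* p_{n^p-1\ordered{\gnv}}$ and $s \append q = r \in D$. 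Hence $\gnv$ lies in the set from the claim (for this $s$ and this $k$), giving its non-emptiness.

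Next I would invoke the dichotomy corollary preceding this claim on the input $\ordered{N, f^*}$, $D$, $p$, and $s$, adjusting $p$ by a direct extension through \cref{GetGoodPair} if necessary to arrange the hypothesis $f^{p_{n^p-1}} = f^*$. In case~(1) of the corollary one obtains a $p_{n^p-1}$-tree $S$ of some height $k^*$ and a $\ordered{p_{n^p-1}, S}$-function $r'$ with $s \append \Vec{r}'(\gnv') \in D$ for every $\gnv' \in \Lev_{\max}S$; since $\Lev_{\max}S$ is by definition an $\vE(f^{p_{n^p-1}})$-tree contained in the set from the claim's conclusion (take $q' = \Vec{r}'(\gnv')$ as the witness), one further direct extension of $p$ restricting $T^{p_{n^p-1}}$ to the nodes appearing in $S$ brings the set into exact coincidence with $\Lev_{\max}S$, delivering the claim.

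The main obstacle is ruling out case~(2) of the dichotomy, which furnishes $T^* \in \vE(f^{p_{n^p-1}})$ with $\ordered{f^{p_{n^p-1}}, T^*} \leq^{**} p_{n^p-1}$ such that no $\gnv' \in \leftexp{<\gw}{T^*}$ admits a direct extension of $\ordered{f^{p_{n^p-1}}, T^*}_{\ordered{\gnv'}}$ landing in $D$ after prepending $s$. To refute case~(2), I would apply density to the extension $p^{**} = s \append \ordered{f^{p_{n^p-1}}, T^*}$ of $p$, obtaining $r^{**} \leq p^{**}$ with $r^{**} \in D$. Unpacking $r^{**} \leq p^{**}$ at its top block yields $\gnv^{**} \in \leftexp{<\gw}{T^*}$ and $q^{**} \leq^* \ordered{f^{p_{n^p-1}}, T^*}_{\ordered{\gnv^{**}}}$ together with a prefix $s' \leq s$ satisfying $s' \append q^{**} = r^{**} \in D$. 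When $s'$ is merely a direct extension of $s$ (absorbable into $q^{**}$), this immediately contradicts case~(2); otherwise $s'$ refines $s$ by adding further Prikry moves and one restarts the dichotomy with $s'$ in place of $s$, iterating. Termination of this iteration -- based on the observation that at each step a block of the current prefix is strictly further refined while the shrinking trees $T^{(n)}$ form a $\leq^{**}$-descending chain inside the $\gk$-complete filter $\vE(f^{p_{n^p-1}})$ -- is the delicate point and forces the iteration to bottom out in case~(1), completing the proof.
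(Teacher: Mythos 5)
There is a genuine gap, and it sits exactly where you flagged it: the termination of your iteration is not a ``delicate point'' to be deferred --- it is the entire content of the claim, and your proposal supplies no argument for it. For the $s$ you extract in your first step, case~(2) of the dichotomy corollary can perfectly well hold; the claim only promises that \emph{some} $s$ works, so there is nothing to ``refute'' for a particular $s$, and your plan necessarily becomes a search through successive prefixes $s \geq s' \geq s'' \geq \dotsb$. Nothing makes this search well-founded: extensions of the lower part may keep adding blocks and shrinking trees indefinitely (any nontrivial forcing has infinite descending chains), and neither the $\leq^{**}$-descent of the top-block trees nor $\gk$-completeness of $\vE(f^{p_{n^p-1}})$ yields a rank that forces the process into case~(1). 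Your ``absorbable'' subcase is also faulty as stated: from $s' \leq^* s$ and $s' \append q^{**} \in D$ you cannot conclude $s \append q^{**} \in D$ ($D$ is open downwards, not upwards), and a direct extension of the lower blocks cannot be absorbed into the top block $q^{**}$; case~(2) of the corollary speaks about $s$ itself, so even this branch produces no contradiction. A further, smaller mismatch: adjusting $p$ via \cref{GetGoodPair} to arrange $f^{p_{n^p-1}} = f^*$ changes the set in the conclusion, which refers to the original $T^{p_{n^p-1}}$ and $p_{n^p-1\ordered{\gnv}}$; this is repairable by projecting along $\restricted \dom f^{p_{n^p-1}}$, but you do not address it --- and in fact no good pair is needed for this claim at all.

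The paper's proof avoids iteration entirely by a single global intersection. Assume the claim fails for \emph{every} pair $(s,k)$; then each set $S(s,k)$ is not an $\vE(f^{p_{n^p-1}})$-tree, so by the lemma preceding the dichotomy corollary there is $T(s,k) \in \vE(f^{p_{n^p-1}})$ with $\leftexp{k}{T}(s,k) \intersect S(s,k) = \emptyset$. Since the lower-part forcing below $p \restricted n^p-1$ has size below $\gk$ and $k$ ranges over $\gw$, the $\gk$-completeness of $\vE(f^{p_{n^p-1}})$ lets one intersect \emph{all} of these at once into $T^* \in \vE(f^{p_{n^p-1}})$. Now one single application of density of $D$ below $p\restricted n^p-1 \append \ordered{f^{p_{n^p-1}}, T^*}$ unpacks (as in your first step) into some $s$, some $\gnv \in \leftexp{k}{T^*}$, and some $q \leq^* \ordered{f^{p_{n^p-1}}, T^*}_{\ordered{\gnv}}$ with $s \append q \in D$; hence $\gnv \in S(s,k) \intersect \leftexp{k}{T^*}$, contradicting the choice of $T(s,k)$. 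The quantifier structure is the point: instead of fixing $s$ first and fighting case~(2), one preemptively kills all potential failures simultaneously, so that whichever $s$ the density argument hands back is already a witness. Your step~1 and your unpacking of \cref{RealOrder} are correct and reappear inside this argument, but without the $<\gk$-counting of prefixes and the global intersection, the proposal does not close.
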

\begin{proof}
Towards contradiction assume the claim fails.
Then for each $s \leq p\restricted n^p - 1$ and $k<\gw$ the set
	$S(s,k) = \setof {\gnv \in \leftexp{k}{T}^{p_{n^p-1}}}
						{\exists q\leq^* p_{n^p-1\ordered{\gnv}}\ s \append q \in D}$
						is not an $\vE(f^p)$-tree.
Thus there is a set $T(s, \gk) \in \vE(f^{p_{n^p-1}})$ satisfying
	$\leftexp{k}{T}(s, k) \intersect S(s,k) = \emptyset$.
	Set $T^* = \bigintersect \setof {T(s,k)}{k<\gw,\  s\leq p\restricted n^p-1}$.
	Consider the condition $p^* = p\restricted n^p-1 \append \ordered{f^{p_{n^p-1}}, T^*}$.
	By the density of the set $D$ there is an extension $s \leq p \restricted n^p-1$,
			$\gnv \in \leftexp{k}{T}^*$,  and
				$q \leq^* p^*_{n^p-1\ordered{\gnv}}$,
				such that $s \append q \in D$.
	Hence $\gnv \in S(s, k)$.
		However $\leftexp{k}{T^*} \intersect S(n,k) = \emptyset$, contradiction.
\end{proof}
\begin{corollary}
Assume $\ordered{N, f^*}$ is a good pair,
	$D \in N$ is a dense open set,
 and  $p \in \PP$ is a condition such that $f^{p_{n^p-1}} = f^*$.
Then there is a maximal antichain $A$ below $p \restricted n^p-1$ such that
	for each $s \in A$
		there is a $p_{n^p-1}$-tree S
		and
			a $\ordered{p_{n^p-1}, S}$-function $r$, such that
					for each $\gnv \in \Lev_{\max} S$,
										$s  \append \Vec{r}(\gnv) \in D$.
\end{corollary}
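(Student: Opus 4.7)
The plan is to reduce the statement to a density argument followed by Zorn's lemma. Let $\mathcal{A}$ denote the collection of extensions $s \leq p \restricted n^p-1$ for which there exist a $p_{n^p-1}$-tree $S$ and a $\ordered{p_{n^p-1}, S}$-function $r$ with $s \append \Vec{r}(\gnv) \in D$ for every $\gnv \in \Lev_{\max} S$. It suffices to prove that $\mathcal{A}$ is dense below $p \restricted n^p-1$, because then any maximal antichain $A \subseteq \mathcal{A}$ extracted via Zorn's lemma is automatically a maximal antichain below $p \restricted n^p-1$: any $s' \leq p\restricted n^p-1$ incompatible with every element of $A$ could be strengthened into $\mathcal{A}$, contradicting maximality of $A$ within $\mathcal{A}$.

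To verify density, fix an arbitrary $s' \leq p \restricted n^p-1$ and form the condition $p' = s' \append \ordered{p_{n^p-1}}$; this is a legal member of $\Bar{\PP}$ because $s'$ only interleaves extender sequences whose critical points lie below $\crit \vE^p_{n^p-1}$, so the strict-increase, $j$-sup, and $\gl$-spacing requirements are inherited from $p$. Apply the preceding claim to $p'$ and $D$ to obtain an extension $s'' \leq p' \restricted n^{p'}-1 = s'$ together with some $k<\gw$ such that
\[
X = \setof{\gnv \in \leftexp{k}{T}^{p_{n^p-1}}}{\exists q \leq^* p_{n^p-1\ordered{\gnv}},\ s'' \append q \in D}
\]
is an $\vE(f^{p_{n^p-1}}) = \vE(f^*)$-tree. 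Now invoke \cref{DenseHomogenOneBlock} with the condition $s'' \append \ordered{p_{n^p-1}}$ and extension $s''$ of its initial segment: the hypothesis on $X$ has just been established, and $f^{p_{n^p-1}} = f^*$ by assumption on $p$. The conclusion yields a $p_{n^p-1}$-tree $S$ and a $\ordered{p_{n^p-1},S}$-function $r$ with $s'' \append \Vec{r}(\gnv) \in D$ for all $\gnv \in \Lev_{\max}S$, which exhibits $s'' \in \mathcal{A}$ as an extension of $s'$.

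The main obstacle is essentially bookkeeping rather than combinatorial: one has to check that $s' \append \ordered{p_{n^p-1}}$ is a legal condition of $\Bar{\PP}$ (immediate from the definition of $\leq_{\Bar{\PP}}$ and the assumption $s' \leq p \restricted n^p-1$) and that the tree produced by the preceding claim feeds correctly into the hypothesis of \cref{DenseHomogenOneBlock} under the identification $f^{p_{n^p-1}} = f^*$. All the substantial combinatorial content has already been carried out in those two results, so beyond this chaining and a standard Zorn extraction nothing further is required.
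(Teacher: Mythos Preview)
Your argument is correct and is exactly the intended derivation: the paper states this corollary without proof, and the natural route is precisely the one you take --- apply the preceding claim to $s' \append \ordered{p_{n^p-1}}$ to obtain $s'' \leq s'$ and a $k$ for which the relevant set is an $\vE(f^*)$-tree, feed this into \cref{DenseHomogenOneBlock}, and then extract a maximal antichain from the resulting dense set. The bookkeeping you flag (that $s' \append \ordered{p_{n^p-1}}$ is a legitimate condition, and that the tree output by the claim has the right form for the lemma's hypothesis) is handled correctly.
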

\begin{corollary} \label[corollary]{DenseHomogen}
Assume 
	$D \in N$ is a dense open set
			 and  $p \in \PP$ is a condition.
 Then there is a direct extension $p^* \leq^* p$,
 			a $p^*$-tree $S$, and
 			a $\ordered{p^*, S}$-function $r$ such that
 					for each $\gnv \in \Lev_{\max}(S)$,
 								$\Vec{r}(\gnv) \in D$.
\end{corollary}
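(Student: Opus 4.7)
My plan is to prove the corollary by induction on $n^p$, using the maximal antichain corollary immediately above as the core input on the top block and the induction hypothesis to handle the lower blocks.

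For the base case $n^p = 1$, the condition $p = \ordered{p_0}$ has a single block, so $p \restricted n^p-1$ is empty. After choosing a good pair $\ordered{N, f^*}$ via \cref{GetGoodPair} with $f^* \leq^* f^{p_0}$ and $D, p \in N$, the preceding claim (the one removing the conditional) forces $s = \ordered{}$ and produces some $k < \gw$ for which $\setof{\gnv \in \leftexp{k}{T}^{p_0}}{\exists q \leq^* p_{0\ordered{\gnv}}\ q \in D}$ is an $\vE(f^{p_0})$-tree; \cref{DenseHomogenOneBlock} then yields a $p_0$-tree $S$ and a $\ordered{p_0, S}$-function $r$ with $\Vec{r}(\gnv) \in D$ for every $\gnv \in \Lev_{\max} S$, which is what we want.

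For the inductive step with $n^p > 1$, choose a good pair $\ordered{N, f^*}$ with $f^* \leq^* f^{p_{n^p-1}}$ and $p, D \in N$, and after a strong direct extension assume $f^{p_{n^p-1}} = f^*$. Apply the maximal antichain corollary to $p$ and $D$ to obtain a maximal antichain $A$ below $p \restricted n^p-1$ together with a witnessing pair $(S_s, r_s)$ for each $s \in A$. Let $D'$ be the set of $s \in \PP$ for which there exist a $p_{n^p-1}$-tree $S'$ and a $\ordered{p_{n^p-1}, S'}$-function $r'$ satisfying $s \append \Vec{r}'(\gmv) \in D$ for every $\gmv \in \Lev_{\max} S'$. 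Then $D'$ is open (from openness of $D$), dense below $p \restricted n^p-1$ (since $A \subseteq D'$ is a maximal antichain there), and lies in $N$ (by definability from parameters in $N$). The induction hypothesis applied to $p \restricted n^p-1$ and $D'$ produces a direct extension $p^{*0} \leq^* p \restricted n^p-1$, a $p^{*0}$-tree $S^0$, and a $\ordered{p^{*0}, S^0}$-function $r^0$ with $\Vec{r}^0(\gnv) \in D'$ for every $\gnv \in \Lev_{\max} S^0$.

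To finish, for each $\gnv \in \Lev_{\max} S^0$ extract witnesses $S^1_\gnv$ and $r^1_\gnv$ and glue them into a single direct extension $p^*_{n^p-1} \leq^* p_{n^p-1}$ with compatibly grafted subtrees. Setting $p^* = p^{*0} \append p^*_{n^p-1}$, define $S$ by $\Lev_{<n} S = S^0$ and $S_{\ordered{\gnv}}$ equal to the shrunk $S^1_\gnv$ for $\gnv \in \Lev_{n-1} S$, while $r$ is $r^0$ on the initial levels and (shrunk) $r^1_\gnv$ beyond, so that $\Vec{r}(\gnv \append \gmv) = \Vec{r}^0(\gnv) \append \Vec{r}^1_\gnv(\gmv) \in D$. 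The main obstacle is this gluing: $\Lev_{\max} S^0$ can have cardinality up to $\gl$, so the $\gk$-completeness of $\vE(f^{p_{n^p-1}})$ alone does not suffice to intersect the top levels $\Lev_0 S^1_\gnv$ directly. The remedy, as in \cref{DenseHomogenOneBlockCase1} and \cref{DenseHomogenOneBlockCase2}, is to pass through the extender ultrapower: since $\gnv \mapsto (S^1_\gnv, r^1_\gnv)$ is definable from parameters in $N$, its image under each $j_{E_\gx}$ provides a uniform template at $\mc_\gx(f^{p_{n^p-1}})$ which pulls back to a single $p^*_{n^p-1}$, after which diagonal intersection (\cref{DiagonalIsBig}) assembles the grafted tree structure.
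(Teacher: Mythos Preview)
Your inductive approach on $n^p$ is the argument the paper has in mind (the corollary is stated without proof, and the recursive definitions of $p$-tree and $\ordered{p,S}$-function for general $p\in\PP$ are set up precisely so that this induction goes through). However, the ``main obstacle'' you identify in the gluing step is not real, and your proposed ultrapower remedy is both unnecessary and too vaguely sketched to count as a proof.

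The tree $S^0$ is a $p^{*0}$-tree for $p^{*0} \leq^* p \restricted n^p-1$, and every block of $p^{*0}$ lives strictly below $\gk$: by the definition of $\Bar{\PP}$ we have $\gl(\vE^p_i) < \crit(\vE^p_{i+1}) \leq \gk$ and $\sup_\gx j_{E^p_{i,\gx}}(\crit \vE^p_i) < \crit(\vE^p_{i+1}) \leq \gk$ for each $i < n^p-1$. Hence the entire lower forcing has size $<\gk$ (this is used verbatim in the paper, e.g.\ the line ``Since $\power{P_0}<\gk$'' in the proof of \cref{BecomesSingular}), and in particular $\power{\Lev_{\max} S^0} < \gk$, not ``up to $\gl$''.

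In fact no intersection on the top block is needed at all. Take $p^*_{n^{p^*}-1} = p_{n^p-1}$ (the top block after you arranged $f^{p_{n^p-1}} = f^*$), set $\Lev_{<n}S = S^0$ and $S_{\ordered{\gnv}} = S^1_\gnv$ for each $\gnv\in\Lev_{n-1}S$, and define $r$ by $r\restricted \Lev_{<n}S = r^0$ and $r(\gnv\append\gmv) = r^1_\gnv(\gmv)$. The recursive definition of $p$-tree explicitly allows the subtrees $S_{\ordered{\gnv}}$ to vary with $\gnv$ (and even to have different heights), and the recursive definition of $\ordered{p,S}$-function asks only that $r\restricted\Lev_{<n}S$ be a $\ordered{p^{*0},S^0}$-function (which $r^0$ is) and that each $\gmv\mapsto r(\gnv\append\gmv)$ be a $\ordered{p_{n^p-1},S^1_\gnv}$-function (which $r^1_\gnv$ is). Then $\Vec{r}(\gnv\append\gmv) = \Vec{r}^{\,0}(\gnv)\append\Vec{r}^{\,1}_\gnv(\gmv) \in D$ directly from your choice of witnesses, with nothing left to glue.

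A minor point: your claim that $D'\in N$ ``by definability from parameters in $N$'' is dubious, since $f^*$ need not lie in $N$ (this is exactly the closure issue the paper flags before \cref{DenseHomogenOneBlockCase0}). But you do not actually need $D'\in N$: the clause ``$D\in N$'' in the statement of the corollary is a leftover artifact --- note that \cref{PrikryProperty} applies the corollary with no $N$ in sight --- and the induction hypothesis only requires $D'$ to be dense open below $p\restricted n^p-1$, which it is.
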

\section{$\gk$ Properties in the Genric Extension}
				\label[section]{sec:KappaProperties}
\begin{claim} \label[claim]{PrikryProperty}
The forcing notion $\PP$ is of Prikry type.
\end{claim}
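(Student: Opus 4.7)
Fix a condition $p \in \PP$ and a sentence $\gs$ in the forcing language of $\PP$. The aim is to produce a direct extension $p^{**} \leq^* p$ that decides $\gs$. The plan is to invoke \cref{DenseHomogen} with the dense open set of deciding conditions, then use the $\gk$-completeness of the measures $E_\gx$ to refine the resulting tree so that the decision is the same along every branch, and finally apply \cref{GetPreDense} to collapse this tree of uniformly deciding conditions into a single direct extension.

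First, let $D = \setof{q \in \PP}{q \decides \gs}$, which is dense open in $\PP$. Apply \cref{GetGoodPair} to obtain a good pair $\ordered{N, f^*}$ with $p, \gs, D \in N$, and after the obvious preliminary strong direct extension assume $f^{p_{n^p-1}} = f^*$. By \cref{DenseHomogen} there are a direct extension $p^* \leq^* p$, a $p^*$-tree $S$, and a $\ordered{p^*, S}$-function $r$ such that $\Vec{r}(\gnv) \in D$ for every $\gnv \in \Lev_{\max}(S)$. Thus each $\Vec{r}(\gnv)$ either forces $\gs$ or forces $\lnot \gs$.

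Second, refine $S$ to a subtree $S' \subseteq S$ along which the decision is uniform, working by recursion from $\Lev_{\max}(S)$ downward. At each node $\gnv$ of $S$ of height one less than the current maximum, the successor set $\Suc_S(\gnv)$ lies in some $E_\gx(f^{p^*})$; partition it into the two classes according to whether the associated extension of $\Vec{r}(\gnv)$ forces $\gs$ or $\lnot\gs$, and retain whichever class still belongs to $E_\gx(f^{p^*})$. This produces a decision label at $\gnv$, and propagating up one level at a time reduces the labels to a single decision at the root, by the two-valued nature of the split and the fact that each $E_\gx(f^{p^*})$ is a $\gk$-complete ultrafilter. Without loss of generality the uniform decision is ``forces $\gs$''. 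By construction $S'$ is still a $p^*$-tree, and $r \restricted S'$ remains a $\ordered{p^*, S'}$-function, because shrinking successor sets inside measure-one sets preserves both the definition of a $p^*$-tree and the strong-direct-extension conditions $\Vec{r}(\gnv) \leq^{**} p^*_{\ordered{\gnv}\downarrow}$ and $\Vec{r}(\gnv) \leq^{**} p^*_{\ordered{\gnv}}$.

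Finally, apply \cref{GetPreDense} to $p^*$, $S'$, and $r \restricted S'$ to obtain a strong direct extension $p^{**} \leq^{**} p^*$ such that $\setof{\Vec{r}(\gnv)}{\gnv \in \Lev_{\max}(S')}$ is predense below $p^{**}$. Every element of this predense set forces $\gs$, so no extension of $p^{**}$ can force $\lnot \gs$, hence $p^{**} \forces \gs$, and $p^{**} \leq^* p$ as required. The main obstacle is the homogenization step: one must justify that the shrinking respects the data of a $p^*$-tree together with the strong-direct-extension conditions packed into being a $\ordered{p^*, S'}$-function, so that the final appeal to \cref{GetPreDense} is legitimate. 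This is essentially a bookkeeping argument using the $\gk$-completeness of every $E_\gx(f^{p^*})$, combined with the observation that the diagonal intersection operation from \cref{DiagonalIsBig} respects the tree structure one level at a time.
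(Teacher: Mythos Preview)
Your proposal is correct and follows essentially the same route as the paper: invoke \cref{DenseHomogen} on the dense open set of deciding conditions to obtain $p^*$, a $p^*$-tree $S$, and a $\ordered{p^*,S}$-function $r$; homogenize along the max-level partition into $\forces\gs$ versus $\forces\lnot\gs$; then feed the surviving tree and function into \cref{GetPreDense} to obtain a strong direct extension below which the deciding conditions are predense. The paper's proof just asserts that exactly one of $X_0$, $X_1$ is ``measure one'' as a tree, while you spell out the level-by-level recursion that makes this true; and your preliminary appeal to \cref{GetGoodPair} and closing remark about \cref{DiagonalIsBig} are harmless but not needed, since \cref{DenseHomogen} already absorbs the good-pair setup and the homogenization here is a finite partition argument rather than a diagonal intersection.
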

\begin{proof}
Assume $p \in \PP$ is a condition and 
					$\gs$ is a formula in the $\PP$-forcing language.\
We will be done by exhibiting a direct extension $p^* \leq^* p$
			 such that $p^* \decides \gs$.
Set $D = \setof {q \leq p}{q \decides \gs}$.
The set $D$ is dense open, hence by \cref{DenseHomogen}
		 there is a direct extension $p^* \leq^* p$,
		a $p^*$-tree $S$,
	and a $\ordered{p^*, S}$-function $r$,
			 such that for each $\gnv \in \Lev_{\max}S$, $\Vec{r}(\gnv) \in D$.
Set $X_0 = \setof {\gnv \in \Lev_{\max}S}{\Vec{r}(\gnv) \forces \lnot \gs}$ and
	$X_1 = \setof {\gnv \in \Lev_{\max}S}{\Vec{r}(\gnv) \forces \gs}$.
Since the sets  $X_0$ and $X_1$ are a disjoint partition of $\Lev_{\max}S$, only one of them is a measure one set.
Fix $i < 2$ such that $X_i$ is a measure one set.
Set $S_i = \setof {\ordered{\gn_0, \dotsc, \gn_k}}
									{\ordered{\gn_0, \dotsc \gn_n}\in X_i,\ k \leq n}$.
Using \cref{GetPreDense}  shrink the trees appearing in the condition $p^*$ so that
	$\setof {\Vec{r}(\gnv)}{\gnv \in \Lev_{\max}S_i}$ is predense below $p^*$.
Thus $p^* \forces \gs_i$, where $\gs_0 = \formula{\lnot\gs}$ and $\gs_1 = \formula{\gs}$.
\end{proof}
\begin{lemma}
$\forces \formula {\gk \text{ is a cardinal}}$.
\end{lemma}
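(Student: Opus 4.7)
The plan is a contradiction argument: suppose some $p \in \PP$ forces $\CN{f} \func \gt \to \gk$ to be a surjection for some $\gt < \gk$ and $\PP$-name $\CN{f}$, and produce a direct extension $p^* \leq^* p$ forcing $\ran \CN{f}$ to be bounded in $\gk$, contradicting surjectivity. The argument breaks into three pieces: (i) for each $\gb < \gt$, find a direct extension of $p$ forcing $\CN{f}(\gb) < \gga^*_\gb$ for some $\gga^*_\gb < \gk$; (ii) amalgamate these over $\gb$ using $\gk$-closure of the direct order; (iii) take the supremum, which is $<\gk$ by regularity of $\gk$ in $V$.

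For step (i), fix $\gb < \gt$ and consider the dense open set $D_\gb = \setof{q \leq p}{q \decides \CN{f}(\gb)}$. Apply \cref{DenseHomogen} to obtain a direct extension $p^{(\gb)} \leq^* p$, a $p^{(\gb)}$-tree $S_\gb$, and a $\ordered{p^{(\gb)}, S_\gb}$-function $r_\gb$ such that each $\Vec{r}_\gb(\gnv)$, for $\gnv \in \Lev_{\max}(S_\gb)$, forces $\CN{f}(\gb)$ to some specific $\gga_\gb(\gnv) < \gk$. By \cref{GetPreDense}, these leaves are predense below a suitable strong direct extension of $p^{(\gb)}$. A finite downward induction on the height of $S_\gb$, invoking at each level the $\gk$-completeness of the measure $E_\gx(f^{p^{(\gb)}})$ indexing the relevant successor set (all these measures are $\gk$-complete because $\crit \vE = \gk$), together with the diagonal-intersection normality of \cref{DiagonalIsBig}, shrinks $S_\gb$ to a sub-$p^{(\gb)}$-tree on which the $V$-function $\gga_\gb$ is uniformly bounded by some $\gga^*_\gb < \gk$.

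For steps (ii) and (iii), I would combine the $p^{(\gb)}$'s into a single $p^* \leq^* p^{(\gb)}$ for every $\gb < \gt$, using that $\leq^*$ on $\PP$ is $\gk$-closed. For the last block this closure follows from $\gk$-completeness of the measures (intersecting $<\gk$-many trees) together with $\gl^{<\gk} = \gl$ (uniting $<\gk$-many functions whose domains each have size $<\gl$). For the finitely many earlier blocks, since $\gl(\vE^p_i) < \crit(\vE^p_{i+1}) \leq \gk$ for $i < n^p-1$, all relevant data sits inside $V_\gk$; by the factorization $\PP/p = P_0 \times P_1$ from Section~\ref{sec:Forcing}, $P_0$ is small and contributes no obstruction at $\gk$. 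Then $p^* \forces \formula{\ran \CN{f} \subseteq \sup_{\gb < \gt} \gga^*_\gb}$, and since $\gt < \gk$ and $\gk$ is regular in $V$ (being the critical point of the extenders, hence inaccessible), this supremum is $<\gk$, giving the required contradiction.

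The principal obstacle is the level-by-level bounding in step (i): bare $\gk$-completeness of an ultrafilter on $X$ does not bound an arbitrary function $X \to \gk$ on a measure-one set, so the argument must exploit the specific structure of $\gga_\gb$ arising from \cref{DenseHomogen}—in particular, that $\Vec{r}_\gb(\gnv)$ extends $p^{(\gb)}$ only along the tree stem $\gnv$—and combine this with the normality expressed in \cref{DiagonalIsBig} to reduce successive levels to boundedly many cases. A secondary technicality is the careful accounting of $\gk$-closure of $\leq^*$ across the block structure of $\PP$, managed via the block factorization.
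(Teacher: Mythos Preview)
Your strategy proves too much: if it succeeded, it would show that every function $\gt \to \gk$ in the extension (with $\gt < \gk$) has bounded range, i.e.\ that $\gk$ remains \emph{regular}. But this is false in general for this forcing. When $\mo(\vE) < \gk$ the paper shows (\cref{BecomesSingular}) that $\cf\gk = \cf\mo(\vE)$ in the extension; in particular, for $\mo(\vE)=1$ the generic adds a cofinal $\gw$-sequence through $\gk$. So there really are unbounded functions from some $\gt<\gk$ into $\gk$ in $V[G]$, and your step~(i) cannot produce the claimed $\gga^*_\gb$ for all $\gb$.

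The concrete failure is exactly the ``principal obstacle'' you flag: for a single $\gb$, the leaf values $\gga_\gb(\gnv)$ genuinely depend on $\gnv$ and are unbounded in $\gk$ (think of $\CN{f}(\gb)$ being, say, the first Prikry point above a given seed). Neither $\gk$-completeness nor the normality of \cref{DiagonalIsBig} bounds a function $\OB(d)\to\gk$ on a measure-one set; normality only lets you intersect families indexed by $\OB(d)$, it does not squeeze the range. Compare with the paper's proofs of \cref{BecomesSingular} and \cref{BecomesRegular}: there the bound on $\CN{f}(\gx)$ is obtained only \emph{after} a one-point extension by some $\gm$ with sufficiently large Mitchell index, so that all relevant $\gnv$ sit below $\gm$; that one-point step is precisely what you omit, and without it no direct extension can bound a single value.

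The paper's actual argument for this lemma is entirely different and much shorter: it splits on $\mo(\vE)$. If $\mo(\vE)=1$, the Prikry property plus $\gk$-closure of $\leq^*$ gives that no new bounded subsets of $\gk$ are added, so every cardinal $\leq\gk$ is preserved. If $\mo(\vE)>1$, the block-factoring structure of $\PP$ yields an unbounded set of cardinals below $\gk$ each of which is preserved (the upper factor above such a point has $\gk$-closed direct order and the Prikry property), and hence $\gk$ is preserved as a limit of preserved cardinals. No attempt is made to bound arbitrary functions into $\gk$.
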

\begin{proof}
If $\mo(\Vec{E}) = 1$  then there are no new bounded subset of $\gk$ in $V^{\PP}$,
	hence no cardinal below $\gk$ is collapsed, hence $\gk$ is preserved.
	If $\mo(\Vec{E}) > 1$ then an unbounded number of cardinals below $\gk$ is preserved,  hence $\gk$ is preserved.
\end{proof}
\begin{claim} \label{BecomesSingular}
If $\mo(\Vec{E}) < \gk$ is regular then
	$\forces \formula{\cf \gk =\cf \mo(\Vec{E})}$.
\end{claim}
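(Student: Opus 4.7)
The plan is to prove $\Vdash \cf \gk \leq \mo(\vE)$ and $\Vdash \cf \gk \geq \mo(\vE)$ separately; combining these with the $V$-regularity of $\mo(\vE)$ gives $\cf \gk = \cf \mo(\vE)$.

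For the lower bound, I would suppose toward contradiction that some condition $p \in \PP$ forces $\dot h \func \gamma \to \gk$ to be cofinal with $\gamma < \mo(\vE)$. By the Prikry property (\cref{PrikryProperty}), for each $\gb < \gamma$ there is a direct extension of $p$ deciding $\dot h(\gb)$. Since $\gamma < \mo(\vE) < \gk$ and $\leq^*$ is $\gk$-closed (unions of $<\gk$-many condition domains stay of size $<\gl$, and intersections of $<\gk$-many measure-one sets stay of measure one by the $\gk$-completeness of the filters $\vE(d)$), one iterates through $\gamma$ and takes lower bounds at limits to obtain a single direct extension $p^* \leq^* p$ deciding every $\dot h(\gb)$. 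The decided values form a set $A \in V$ of cardinality at most $|\gamma| < \gk$, and by $V$-regularity of $\gk$ this $A$ is bounded below $\gk$, contradicting that $p^*$ forces $\dot h$ to be cofinal.

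For the upper bound, clause (1) of the main theorem applied with $\gns = \ordered{\gk, \vE}$ gives that $C = \setof{\crit \gms}{\gms \in G^\gk}$ is a club in $\gk$, so it suffices to produce a cofinal subset of $C$ of order type $\mo(\vE)$ in $V[G]$. A density argument based on the normality properties of the filters $\vE(d)$ (\cref{DiagonalIsBig}) shows that for each $\eta < \mo(\vE)$ and each $\ga < \gk$, densely many conditions force some $\gms \in G^\gk$ to satisfy both $\mo(\gms) \geq \eta$ and $\crit \gms > \ga$, because $E_\eta(d)$ concentrates on elements of order $\geq \eta$. Combining these across $\eta < \mo(\vE)$ through the recursive block structure (\cref{RealOrder}) and the $\mo$-decreasing discipline on stems, one extracts a cofinal $\mo(\vE)$-sequence in $C$ by tracking the generic positions at which the $\mo$-drop across nested blocks occurs.

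The main obstacle is the upper bound, specifically verifying that the extracted cofinal sequence has length exactly $\mo(\vE)$ rather than some smaller regular cardinal. Each generic element of positive $\mo$-value opens a new sub-block with its own recursive structure, and one must show the total cofinality of this nested block-opening process is faithfully $\mo(\vE)$. The Mitchell-increasing structure of $\vE$ is essential here, ensuring that the $E_\gx$ for $\gx < \mo(\vE)$ genuinely produce $\mo(\vE)$ distinct drop-levels in the generic.
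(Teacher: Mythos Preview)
Your lower-bound argument has a genuine gap. You assert that $\leq^*$ is $\gk$-closed, justified by ``unions of $<\gk$-many condition domains stay of size $<\gl$, and intersections of $<\gk$-many measure-one sets stay of measure one.'' This is correct for the top block $\PP^*(\vE)$, but a condition $p\in\PP$ in general has $n^p>1$, and each lower block $p_i$ ($i<n^p-1$) lives in $\PP^*(\vE^p_i)$ whose measures are only $\crit(\vE^p_i)$-complete, with $\crit(\vE^p_i)<\gk$. When the Prikry property hands you a direct extension deciding $\dot h(\gb)$, that extension may move the lower blocks (the proof of \cref{PrikryProperty} goes through \cref{DenseHomogen}, which recurses over all blocks), and once $\gamma\geq\crit(\vE^p_0)$ your iteration need not have a $\leq^*$-lower bound. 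Since you must handle an arbitrary condition $p$ forcing $\dot h$ cofinal, you cannot assume $n^p=1$.

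The paper avoids this in two moves. First it factors $\PP/p\simeq P_0\times P_1$ with $\power{P_0}<\gk$, so it suffices to decide $\GN f(\gx)$ as a $P_0$-name and then bound that name using $\power{P_0}<\gk$. Second, instead of iterating Prikry decisions, it fixes a single good pair $\ordered{N,f^*}$ in advance and applies \cref{DenseHomogen} for each $\gx<\gs$ to obtain trees $S^\gx$ and $\ordered{p^\gx,S^\gx}$-functions $r_\gx$ all over the \emph{same} $f^*$; then intersecting the $\gs<\gk$ many trees $T^\gx$ is legitimate by $\gk$-completeness. The place where the hypothesis $\mo(\vE)<\gk$ actually does work is not a closure argument at all: because $\mo(\vE)<\gk$, the measure-index function $m_\gx$ on $S^\gx$ can be homogenized (by removing null sets) to have finite range, giving $\gt_\gx=\sup\ran m_\gx<\mo(\vE)$, hence $\gt=\sup_{\gx<\gs}\gt_\gx<\mo(\vE)$ by regularity of $\mo(\vE)$. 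A one-step extension by any $\gm$ with $\mo(\gm)=\gt$ then lies above all the relevant $\Vec r_\gx(\gnv)$ and bounds $\GN f$.

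For the upper bound the paper simply records $\forces\cf\gk\leq\cf\mo(\vE)$ as immediate; your sketch there is more elaborate than necessary and somewhat vague about how the extraction yields order type exactly $\mo(\vE)$, but it is the lower bound that carries the content of the claim.
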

\begin{proof}
It is immediate $\forces \formula{\cf\gk \leq \cf\mo(\Vec{E})}$.
Hence we need to show $\forces \formula{\cf\gk \not< \cf\mo(\Vec{E})}$.
Assume $\gs < \gk$ and $p \forces \formula{\gs < \cf\mo(\Vec{E}) \text{ and } 
			\GN{f}\func \gs \to \gk}$.
We will be done by exhibiting  a direct extension $p^* \leq^* p$ such that
		$p^* \forces \formula{\GN{f} \text{ is bounded}}$.
	Let $\ordered{N, f^*}$	 be a good pair such that $p, \GN{f}, \gs \in N$ and $f^* \leq^* f^{p_{n^p-1}}$.
Shrink $T^{p_{n^p-1}}$ so as to satisfy for each
		$\gn \in T^{p_{n^p-1}}$, $\ogn(\gk) > \gs$.

Factor $\PP$ as follows.
Set $P_0 = \setof{s \leq p\restricted n^p-1}{\exists q \leq p_{n^p-1}\ s \append q \in \PP}$
	and
	$P_1 = \setof{q \leq p_{n^p-1}}{\exists s \leq p \restricted n^p-1\  s \append q \in \PP}$.
For each $\gx < \gs$ work as follows.
				Set $D_\gx = \setof {q \leq p_{ n^{p}-1}}
					{\text{There exists a } P_0\text{-name 
					 $\GN{\gr}$ such that } q \forces_{P_1} \formula{\GN{f}(\gx) = \GN{\gr}}}$.
Since $D_\gx \in N$ is a dense open subset of $\PP$ below $p_{n^p-1}$ there is a
	a direct extension  $p^\gx = \ordered{f^*,T^\gx} \leq^* p_{n^p-1}$, 
				a $p^\gx$-tree $S^\gx$, and a $\ordered{p^\gx, S^\gx}$-function $r_\gx$
									satisfying for each $\gnv \in \Lev_{\max}S^\gx$, $\Vec{r}_\gx(\gnv) \in D_\gx$.
		Thus for each $\gnv \in \Lev_{\max} S^\gx$ there is a $P_0$-name
		$\GN{\gr}^{\gx,\gnv}$ so that
			$\Vec{r}_\gx(\gnv)\forces_{P_1} \formula{\GN{f}(\gx) = \GN{\gr}^{\gx,\gnv}}$.
Since $\power{P_0}<\gk$ there is $\gz^{\gx,\gnv} < \gk$ such that
		$p\restricted n^p-1 \forces_{P_0} \formula{\GN{\gr}^{\gx,\gnv}<\gz^{\gx,\gnv}}$.

	Let $m_\gx$ be a function witnessing $S^\gx$ is a $p_{n^p-1}$-tree, i.e.,
		$m_\gx \func \set{\emptyset} \union \Lev_{<\max}S \to \mo(\vE)$ is a function
		satisfying for each $\gnv \in \dom m_\gx$,
				$\Suc_S(\gnv) \in E_{m_\gx(\gnv)}(f^{p_{n^p-1}})$.
    (We use the convention $\Suc_S(\ordered{}) = \Lev_0(S)$.)
 			Since $\mo(\Vec{E}) < \gk$ we can remove a measure zero set from $S^\gx$ (and $\dom m_\gx$)
				and get 		for each
		$\gnv_0, \gnv_1 \in \dom m^\gx$, if $\power{\gnv_0} = \power{\gnv_1}$ then
					$m_\gx(\gnv_0) = m_\gx(\gnv_1)$.
												Thus $\power{\ran m_\gx} < \gw$.
													Set $\gt_\gx = \sup \ran m_\gx$.
Shrink $T^\gx$ so that 
						$\setof {\Vec{r}_\gx(\gnv)}{\gnv \in \Lev_{\max}S^\gx}$ is predense below $p^\gx$.
										Note, if $\gm \in \Lev_0 T^{\gx}$, $\gnv \in \Lev_{\max}S^\gx$,  
														$\mo(\gm) > \gt_\gx$ and $\gnv \not< \gm$, then
																$\Vec{r}(\gnv) \ncompatible p^{\gx}_{\ordered{\gm}}$.
					Hence $p\restricted n^p-1 \append p^{\gx}_{\ordered{\gm}} \forces \formula{\GN{f}(\gx) < \sup \setof {\gz^{\gx,\gnv}}{\gnv\in \Lev_{\max}S^\gx, \gnv < \gm}}$.					

	Set $T^* = \bigintersect_{\gx < \gs} T^\gx$ and
		$p^* = p\restricted n^p-1 \append \ordered{f^*, T^*}$.
						We claim $p^* \forces \formula{\GN{f} \text{ is bounded}}$.
						To show this	 set $\gt = \sup \setof {\gt_\gx}{\gx < \gs}$.
						Note $\gt < \mo(\Vec{E})$.
							Since $\setof{p^*_{\ordered{\gm}}}{\gm \in T^{*},\ 
										\mo(\gm) = \gt}$ is
									predense below $p^*$ it is enough to show that
											$p^*_{\ordered{\gm}} \forces \formula{\GN{f} \text{ is bounded}}$
													for each $\gm \in T^{*}$ such that 
															$\mo(\gm) = \gt$.
									So fix $\gm \in T^{*}$ such that 
															$\mo(\gm) = \gt$.
									Set $\gz = \sup \setof {\gz^{\gx,\gnv}}{\gx < \gs, \gnv\in \Lev_{\max} S^\gx, \gnv<\gm}$.
										Note $\gz < \gk$.
					We get for each $\gx < \gs$,
					$p^*_{\ordered{\gm}} \leq^* p\restricted n^p-1 \append p^{\gx}_{\ordered{\gm}} \forces \formula {\GN{f}(\gx) < \sup \setof {\gz^{\gx,\gnv}}{\gnv\in \Lev_{\max}S^\gx, \gnv < \gm}<\gz<\gk}$.
\end{proof}
\begin{claim} [Gitik]
If $\mo(\Vec{E}) \in [\gk, \gl)$ and $\cf(\mo(\vE)) \geq \gk$
	then 	$\forces \formula{\cf\gk = \gw}$.
\end{claim}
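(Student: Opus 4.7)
The plan is, given any condition $p \in \PP$, to construct a direct extension $p^* \leq^* p$ together with a $\PP$-name $\GN{c}$ for an $\omega$-sequence of ordinals in $\gk$ that $p^*$ forces to be cofinal, yielding $\cf\gk=\gw$ in $V[G]$. The key combinatorial input is that $\mo(\vE)\geq\gk>\gw$ puts every $E_n$ for $n<\gw$ inside $\vE$; the measure $E_n(f^{p_{n^p-1}})$ concentrates on points $\gn\in T^{p_{n^p-1}}$ with $\mo(\gn)=n$, and by the normality of $E_n$ the critical points $\crit\gn=\ogn(\gk)$ of such $\gn$ range unboundedly below $\gk$.

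My first step would use \Cref{DenseHomogen} to verify that for every $n<\gw$ and $\gb<\gk$ the set
\[
D_{n,\gb} = \setof{q \leq p^*}{q \forces \exists \gn \in G^\gk\ (\mo(\gn) = n \wedge \crit\gn > \gb)}
\]
is dense below $p^*$; genericity then yields that in $V[G]$ the critical points of Mitchell-order-$n$ elements of $G^\gk$ are unbounded in $\gk$ for every $n$. I would then define $\GN{\ga}_n = \crit\GN{\gn}_n$, where $\GN{\gn}_n\in G^\gk$ is the first element of Mitchell order exactly $n$ lying above $\GN{\gn}_{n-1}$ (and $\GN{\gn}_0$ the least element of Mitchell order $0$); well-definedness of this recursive definition follows from the unboundedness just noted.

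For cofinality I would argue by contradiction: if some $q \leq p^*$ and $\gb<\gk$ forced $\GN{\ga}_n \leq \gb$ for every $n<\gw$, then using \Cref{GetPreDense} I would pass to a further direct extension of $q$ that has decided $\GN{\gn}_0,\dotsc,\GN{\gn}_{n_0-1}$, and then adjoin a fresh $\gn \in T^{q_{n^q-1}}$ of Mitchell order $n_0$ with $\crit\gn>\gb$, so that $q_{\ordered{\gn}}\forces\GN{\ga}_{n_0}>\gb$, contradicting the supposed bound.

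The main obstacle is this cofinality step, where one must ensure that the fresh point $\gn$ is genuinely forced to be the next Mitchell-$n_0$ element of $G^\gk$ after $\GN{\gn}_{n_0-1}$, i.e., that no subsequent generic extension can insert another Mitchell-$n_0$ point between $\GN{\gn}_{n_0-1}$ and $\gn$ whose critical point lies $\leq \gb$. The hypothesis $\cf(\mo(\vE))\geq\gk$ is essential here: any single condition commits to fewer than $\gk$ Mitchell orders, so this set is bounded in $\mo(\vE)$, leaving infinitely many finite $n_0$ available to be ``unused'' by $q$; the $\vE(f)$-tree combinatorics of \Cref{sec:Dense} then let one freeze out the offending possibilities by a suitable direct extension, closing the argument in the spirit of Gitik's reflection phenomenon for plain Radin forcing.
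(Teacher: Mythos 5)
There is a genuine gap, and it is fatal to the whole plan: the $\gw$-sequence you construct is provably \emph{bounded} below $\gk$, so the contradiction in your final step can never be reached. Your recursion targets \emph{fixed} finite Mitchell orders: $\GN{\gn}_n$ is the first point of $G^\gk$ of order exactly $n$ above $\GN{\gn}_{n-1}$. But consider any point $\gm \in G^\gk$ with $\mo(\gm) \geq \gw$ lying above $\GN{\gn}_0$; such points exist generically, since $\setof{\gn}{\mo(\gn) = \gw}$ is $E_\gw(f)$-measure one and $\mo(\vE) \geq \gk > \gw$. The part of the generic club below $\crit \gm$ is generic for the reflected sequence $\vbm$, whose Mitchell order is $\geq \gw$, and therefore contains points of every finite Mitchell order cofinally below $\crit\gm$. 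An easy induction then shows every $\GN{\gn}_n$ lies below $\crit\gm$: if $\GN{\gn}_{n-1} < \gm$, the first order-$n$ point above $\GN{\gn}_{n-1}$ already occurs in the interval of the club below $\crit\gm$. Hence $\sup_n \GN{\ga}_n \leq \crit\gm < \gk$ holds in $V[G]$ outright, and no direct extension can ``freeze out'' this phenomenon --- it is forced. The same picture (positions of order-$n$ points in a Radin club sit at positions of the form $\gw^n\cdot\ga$, so your sequence stalls at the $\gw^\gw$-th point) shows the problem is structural, not a missing refinement: a cofinal $\gw$-sequence cannot be obtained by chasing a fixed countable set of Mitchell orders.

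The actual mechanism, which your proposal never engages, is a \emph{moving, diagonal} target, and this is also where the hypothesis $\cf(\mo(\vE)) \geq \gk$ really enters (your reading of it --- ``fewer than $\gk$ Mitchell orders are committed, leaving finite $n_0$ unused'' --- is not the relevant point, since finite orders are always available). The paper partitions $T^{p_{n^p-1}}$ into the sets $A_\gx = \setof{\gn \in T^{p_{n^p-1}}}{\gx \in \dom\gn,\ \mo(\gn(\gk)) = \otp((\dom\gn) \intersect \gx)}$ for $\gx < \mo(\vE)$, coupling the Mitchell order of a point to the order type of its \emph{own domain} below $\gx$ --- this uses the extender-based structure (the functions $\gn \in \OB(d)$ and their domains inside $\gee$) in an essential way. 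The recursion then sets $\gz_{n+1}$ to be the least index of a generic point lying in $A_{\gd_n}$, where $\gd_n = \sup((\dom \gn_{\gz_n}) \intersect \mo(\vE))$; since $\power{\dom\gn_{\gz_n}} < \gk \leq \cf(\mo(\vE))$, one has $\gd_n < \mo(\vE)$, so $A_{\gd_n}$ is a legitimate target (it is measure one for $E_{\otp(d \intersect \gd_n)}(d)$ whenever $\gd_n \in d$), and it is this diagonalization against the growing domains that forces $\sup_{n<\gw}\gz_n = \gk$. Any correct proof needs a target that grows with the sequence itself (compare plain Radin, where one takes the next point whose order exceeds the previous point's critical point); to repair your argument you would have to replace the fixed orders $n$ by such a diagonal datum, at which point you have essentially reconstructed Gitik's argument as given in the paper.
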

\begin{proof}
Fix a condition $p \in \PP$ such that 
		$\mo(\vE)+1 \subseteq  \in \dom f^{p_{n^p-1}}$.
Partition $T^{p_{n^p-1}}$  into $\mo(\vE)$ 
	disjoint subsets 
		$\setof {A_\gx}{\gx < \mo(\vE)}$
	by setting
	 for each $\gx < \mo(\vE)$,
\begin{align*}
	A_{\gx} = \setof {\gn \in T^{p_{n^p-1}}}{\gx \in \dom \gn,\ 
			\mo(\gn(\gk)) = \otp((\dom \gn) \intersect \gx)}.
\end{align*}
Let $G$ be generic.
Choose a condition $p \in G$.
Let $\ordof {\gn_\gx}{\gx<\gk}$ be the increasing enumeration of the set 
		$\setof {\gn_0, \dotsc, \gn_k}{p\restricted n^p-1 \append
						 p_{n^p-1\ordered{\gn_0, \dotsc, \gn_{\gk}}} \in G}$.
Set $\gz_0 = 0$.
For each $n < \gw$ set
	$\gz_{n+1} = \min \setof {\gx > \gz_n}{
					\gn_\gx \in A_{sup ((\dom \gn_{\gz_n}) \intersect \mo(\vE)) }}$	.
We are done since $\gk = \sup_{n<\gw} \gz_n$.
\end{proof}
Using the same method as above we get the following claim.
\begin{claim}
If $\mo(\Vec{E}) \in [\gk, \gl)$ and $\cf(\mo(\vE)) < \gk$
	then 	$\forces \formula{\cf\gk = \cf(\mo(\vE))}$.
\end{claim}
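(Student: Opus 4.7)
The plan is to repeat Gitik's proof above, replacing the $\omega$-length recursion by one of length $\cf(\mo(\vE))$. Work in $V$: fix an increasing continuous cofinal sequence $\ordof{\gd_i}{i<\cf(\mo(\vE))}$ in $\mo(\vE)$, a condition $p \in \PP$ with $\mo(\vE) + 1 \subseteq \dom f^{p_{n^p-1}}$, and the same partition of $T^{p_{n^p-1}}$ into pieces $\setof{A_\gx}{\gx<\mo(\vE)}$ defined by $A_\gx = \setof{\gn \in T^{p_{n^p-1}}}{\gx \in \dom\gn,\ \mo(\gn(\gk)) = \otp((\dom\gn)\intersect\gx)}$. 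Let $G\subseteq \PP$ be generic with $p \in G$, and let $\ordof{\gn_\gx}{\gx<\gk}$ be the induced increasing enumeration, exactly as in the proof of \cref{BecomesSingular}.

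Define $\ordered{\gz_i}$ for $i<\cf(\mo(\vE))$ by setting $\gz_0 = 0$; at a successor stage let $\gz_{i+1} = \min\setof{\gx > \gz_i}{\gn_\gx \in A_{\gd_{i+1}}}$; at a limit $i$ set $\gz_i = \sup_{j<i}\gz_j$. Each such limit sup stays below $\gk$ because $\gk$ remains a cardinal in $V[G]$ and $i < \cf(\mo(\vE)) < \gk$, so the recursion goes through; in particular each index $\gd_{i+1}$ is strictly below $\mo(\vE)$, so $A_{\gd_{i+1}}$ is actually one of the pieces of the partition. Exactly as in Gitik's proof, the resulting $\ordered{\gz_i}$ is cofinal in $\gk$, yielding $\forces \formula{\cf\gk \leq \cf(\mo(\vE))}$.

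For the matching lower bound $\forces \formula{\cf\gk \geq \cf(\mo(\vE))}$, the argument from the earlier cofinality claim applies almost verbatim. Given a name $\GN f \func \gs \to \gk$ with $\gs < \cf(\mo(\vE))$, the combinatorial core produced, for each $\gx<\gs$, a tree $S^\gx$, a $\ordered{p^\gx, S^\gx}$-function $r_\gx$, and an auxiliary ordinal $\gt_\gx = \sup\ran m_\gx < \mo(\vE)$ of finite cardinality; the argument closed by choosing $\gm$ with $\mo(\gm) = \gt := \sup_{\gx<\gs}\gt_\gx$ and bounding $\GN f(\gx)$ uniformly through $p^*_{\ordered{\gm}}$. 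The only spot where the hypothesis on $\mo(\vE)$ enters is the requirement $\gt < \mo(\vE)$, and since $\gs < \cf(\mo(\vE))$ this is automatic. Consequently no name for a cofinal $\gs \to \gk$ with $\gs < \cf(\mo(\vE))$ survives, so $\forces \formula{\cf\gk \geq \cf(\mo(\vE))}$.

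The main obstacle is the upper-bound step, namely checking that $\sup_{i<\cf(\mo(\vE))}\gz_i = \gk$ rather than stalling below some $\gz^* < \gk$. This is the same issue one faces in Gitik's original proof and is resolved in the same way: the $A_{\gd_{i+1}}$ are positive-measure pieces met cofinally often in the generic, and the constraint $\mo(\gn_{\gz_{i+1}}(\gk)) = \otp((\dom\gn_{\gz_{i+1}})\intersect \gd_{i+1})$ coupled with $\gd_{i+1} \to \mo(\vE)$ forces the prescribed $\mo$-values to grow unboundedly, which in turn forces the indices $\gz_{i+1}$ to climb to $\gk$. This is the point where the two halves of the proof mesh: were the $\gz_i$'s bounded by some $\gz^* < \gk$, the lower-bound argument above applied to $\gs = \cf(\mo(\vE))$ would contradict the existence of the subsequence.
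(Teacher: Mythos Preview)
Your overall plan---Gitik's partition and a recursion of length $\cf(\mo(\vE))$ for the upper bound, and the bounding argument of \cref{BecomesSingular} for the lower bound---is exactly what the paper's one--line ``same method as above'' is pointing at. So the strategy is right; the execution has two genuine gaps.

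\medskip

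\textbf{Lower bound.} You assert that the proof of \cref{BecomesSingular} ``applies almost verbatim'' and that ``the only spot where the hypothesis on $\mo(\vE)$ enters is the requirement $\gt<\mo(\vE)$.'' That is not correct. In that proof the hypothesis $\mo(\vE)<\gk$ is used \emph{earlier}, in the stabilization step: one partitions each level of $S^\gx$ according to the value of $m_\gx(\gnv)\in\mo(\vE)$ and uses $\gk$-completeness to shrink to a single value per level, yielding $\lvert\ran m_\gx\rvert<\gw$ and hence $\gt_\gx<\mo(\vE)$. With $\mo(\vE)\geq\gk$ that partition has $\geq\gk$ pieces and $\gk$-completeness no longer suffices; a priori $\ran m_\gx$ may be cofinal in $\mo(\vE)$, so $\gt_\gx=\mo(\vE)$ and the rest of the argument collapses. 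You need an additional idea here (e.g.\ a different way to bound the measure indices appearing in $S^\gx$ using $\cf(\mo(\vE))<\gk$), not a verbatim transfer.

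\medskip

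\textbf{Upper bound.} The recursion you set up is fine, but your justification that $\sup_i\gz_i=\gk$ is circular. You write that if the $\gz_i$ were bounded, ``the lower-bound argument above applied to $\gs=\cf(\mo(\vE))$ would contradict the existence of the subsequence.'' But that argument is only valid for $\gs<\cf(\mo(\vE))$, so it cannot be invoked at $\gs=\cf(\mo(\vE))$; and even if it could, its conclusion would be that such a function is \emph{bounded}---which is precisely your hypothesis, not a contradiction. You need a direct argument that the indices $\gz_i$ are cofinal in $\gk$, using the structure of the generic sequence and the fact that the $\gd_i$ exhaust $\mo(\vE)$ (this is the same point the paper leaves implicit in Gitik's proof with ``we are done since $\gk=\sup_{n<\gw}\gz_n$'').
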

\begin{claim} \label{BecomesRegular}
If $\mo(\Vec{E}) = \gl$ then
	$\forces \formula{\gk \text{ is regular}}$.
\end{claim}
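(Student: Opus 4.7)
The plan is to adapt the strategy of \cref{BecomesSingular}, with $\gl = \mo(\vE)$ now playing the role that $\mo(\vE) < \gk$ played there. Suppose $\gs < \gk$ and $p \forces \formula{\GN{f}\func \gs \to \gk}$; it suffices to exhibit a direct extension $p^* \leq^* p$ forcing $\GN{f}$ to be bounded below $\gk$.

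First I would pick a good pair $\ordered{N, f^*}$ with $p, \GN{f}, \gs \in N$ and $f^* \leq^* f^{p_{n^p-1}}$, factor $\PP$ into $P_0 \times P_1$ using the block decomposition of \cref{RealOrder}, and for each $\gx < \gs$ apply \cref{DenseHomogen} to the dense open set in $N$ of conditions in $P_1$ deciding $\GN{f}(\gx)$ as a $P_0$-name. This yields a direct extension $p^\gx \leq^* p_{n^p-1}$, a $p^\gx$-tree $S^\gx$, a $\ordered{p^\gx, S^\gx}$-function $r_\gx$, a $P_0$-name $\GN{\gr}^{\gx,\gnv}$ with $\Vec{r}_\gx(\gnv) \forces_{P_1} \formula{\GN{f}(\gx) = \GN{\gr}^{\gx,\gnv}}$ for each $\gnv \in \Lev_{\max}S^\gx$, and a $V$-bound $\gz^{\gx, \gnv} < \gk$ on $\GN{\gr}^{\gx, \gnv}$. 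Let $m_\gx$ witness that $S^\gx$ is a $p^\gx$-tree and set $\gt_\gx := \sup \ran m_\gx$.

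The hard part is to arrange, by further shrinking each $S^\gx$, that $\gt_\gx < \gl$. In \cref{BecomesSingular} this was automatic: $\mo(\vE) < \gk$ let one homogenize $m_\gx$ via $\gk$-completeness so that it depended only on $\power{\gnv}$, making $\power{\ran m_\gx}$ finite. With $\mo(\vE) = \gl \geq \gk$ the naive partition would involve $\gl$ colors and no longer fits inside a $\gk$-complete measure, so I would refine $S^\gx$ recursively from its leaves downward, using diagonal intersections from \cref{DiagonalIsBig}, the $\gk$-completeness of each individual $E_{\gx'}$, and the hypothesis $\gl^{<\gk} = \gl$, in order to shrink $\ran m_\gx$ into a bounded subset of $\gl$. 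Regularity of $\gl$ then yields $\gt_\gx < \gl$. This is the step I expect to demand the most care.

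Once $\gt_\gx < \gl$ is secured for each $\gx < \gs$, regularity of $\gl$ together with $\gs < \gk \leq \gl$ gives $\gt := \sup_{\gx < \gs} \gt_\gx < \gl$; fix any $\gx_0 \in (\gt, \gl)$. After using \cref{GetPreDense} to make $\setof{\Vec{r}_\gx(\gnv)}{\gnv \in \Lev_{\max}S^\gx}$ predense below each $p^\gx$, one sets $T^* := \bigintersect_{\gx < \gs} T^{p^\gx} \in \vE(f^*)$ by $\gk$-completeness and $p^* := p\restricted n^p-1 \append \ordered{f^*, T^*}$; the set $\setof{p^*_{\ordered{\gm}}}{\gm \in T^*,\ \mo(\gm) = \gx_0}$ is then predense below $p^*$. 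For each such $\gm$, because every Mitchell index appearing in any $S^\gx$ is at most $\gt < \gx_0 = \mo(\gm)$, the compatibility analysis of \cref{BecomesSingular} confines the $\gnv$ compatible with $p^*_{\ordered{\gm}}$ to those with $\gnv < \gm$; a cardinality estimate on $\setof{\gnv < \gm}$, now combined with the restriction to Mitchell indices $\leq \gt$, yields $\gz_\gm := \sup\setof{\gz^{\gx, \gnv}}{\gx < \gs,\ \gnv < \gm} < \gk$ by regularity of $\gk$ in $V$. Hence $p^*_{\ordered{\gm}} \forces \formula{\GN{f} \text{ is bounded by } \gz_\gm}$, and therefore $p^* \forces \formula{\GN{f} \text{ is bounded}}$, which shows $\gk$ is regular in $V[G]$.
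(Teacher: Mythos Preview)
Your outline matches the paper's proof almost step for step; the only substantive divergence is in what you call ``the hard part.'' In the paper this step is a single sentence: since $\gl = \mo(\vE)$ is regular and $\power{S^\gx} < \gl$, one has $\gt_\gx = \sup \ran m_\gx < \gl$ immediately. No recursive refinement, no diagonal homogenization of $m_\gx$ is needed. The point is that (after the routine shrinking already implicit in $T^{p^\gx} \in \vE(f^*)$) every $\gn \in T^{p^\gx}$ satisfies $\power{\gn} < \gk$ and $\ran \gn \subseteq V_\gk$, so the number of such $\gn$ is at most $\power{\dom f^*}^{<\gk} \cdot \gk < \gl$ by the standing hypothesis $\gm^{<\gk} < \gl$ for $\gm < \gl$. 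Hence $\power{\dom m_\gx} \leq \power{S^\gx} < \gl$, and regularity of $\gl$ finishes it. Your worry about ``$\gl$ colors not fitting inside a $\gk$-complete measure'' is misplaced: in \cref{BecomesSingular} the homogenization was forced because there $\mo(\vE) < \gk$ while $\power{S^\gx}$ could exceed $\mo(\vE)$; here $\mo(\vE) = \gl > \power{S^\gx}$, so the size of the domain already bounds the range. Note also that shrinking $S^\gx$ cannot change the value of $m_\gx$ at a surviving node, so your proposed recursive refinement would in any case reduce to a domain-size argument.

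One cosmetic difference: you split on $\mo(\gm) = \gx_0$ for a fixed $\gx_0 \in (\gt, \gl)$, whereas the paper uses $\mo(\gm) = \ogm(\gt)$; both give an $E$-large set of $\gm$ with $\mo(\gm)$ exceeding every $\gt_\gx$, so either choice works for the incompatibility analysis and the final bound $\gz_\gm < \gk$.
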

\begin{proof}
Assume $\gs < \gk$ and $p \forces \formula{\GN{f}\func \gs \to \gk}$.
We will be done by exhibiting  a direct extension $p^* \leq^* p$ such that
		$p^* \forces \formula{\GN{f} \text{ is bounded}}$.
	Let $\ordered{N, f^*}$	 be a good pair such that $p, \GN{f}, \gs \in N$ and $f^* \leq^* f^{p_{n^p-1}}$.
Shrink $T^{p_{n^p-1}}$ so as to satisfy for each
		$\gn \in \Lev_0(T^{p_{n^p-1}})$, $\ogn(\gk) > \gs$.

Factor $\PP(\vE)$ as follows.
Set $P_0 = \setof{s \leq p\restricted n^p-1}{\exists q \leq p_{n^p-1}\ s \append q \in \PP(\vE)}$
	and
	$P_1 = \setof{q \leq p_{n^p-1}}{\exists s \leq p\ \restricted n^p-1 s \append q \in \PP(\vE)}$.
For each $\gx < \gs$ work as follows.
				Set $D_\gx = \setof {q \leq p_{ n^{p}-1}}
					{\text{There exists a } P_0\text{-name 
					 $\GN{\gr}$ such that } q \forces_{P_1} \formula{\GN{f}(\gx) = \GN{\gr}}}$.
Since $D_\gx \in N$ is a dense open subset of $\PP$ below $p_{n^p-1}$ there is a
	a direct extension  $p^\gx = \ordered{f^*,T^\gx} \leq^* p_{n^p-1}$, 
				a $p^\gx$-tree $S^\gx$, and a $\ordered{p^\gx, S^\gx}$-function $r_\gx$
									satisfying for each $\gnv \in \Lev_{\max}S^\gx$, $\Vec{r}_\gx(\gnv) \in D_\gx$.
		Thus for each $\gnv \in \Lev_{\max} S^\gx$ there is a $P_0$-name
		$\GN{\gr}^{\gx,\gnv}$ so that
			$\Vec{r}_\gx(\gnv)\forces \formula{\GN{f}(\gx) = \GN{\gr}^{\gx,\gnv}}$.
Since $\power{P_0}<\gk$ there is $\gz^{\gx,\gnv} < \gk$ such that
		$p\restricted n^p-1 \forces_{P_0} \formula{\GN{\gr}^{\gx,\gnv}<\gz^{\gx,\gnv}}$.

Let $m_\gx$ be a function witnessing $S^\gx$ is a $p_{n^p-1}$-tree, i.e.,
		$m_\gx \func \set{\emptyset} \union \Lev_{<\max}S \to \mo(\vE)$ is a function
		satisfying for each $\gnv \in \dom m_\gx$,
				$\Suc_S(\gnv) \in E_{m_\gx(\gnv)}(f^{p_{n^p-1}})$.
    (We use the convention $\Suc_S(\ordered{}) = \Lev_0(S)$.)
Since $\gl=\mo(\Vec{E})$ is regular and $\power{S^\gx} < \gl$ we get
		$\gt_\gx = \sup \ran m_\gx < \gl$.
Shrink $T^\gx$ so that $\setof {\Vec{r}_\gx(\gnv)}{\gnv \in \Lev_{\max}S^\gx}$
			 is predense below $p^\gx$.
										Note, if $\gm \in T^{\gx}$, $\gnv \in \Lev_{\max}S^\gx$,  
														$\mo(\gm) > \ogm(\gt_\gx)$ and $\gnv \not< \gm$, then
																$\Vec{r}(\gnv) \ncompatible p^{\gx}_{\ordered{\gm}}$.
					Hence $p\restricted n^p-1 \append p^{\gx}_{\ordered{\gm}} \forces \formula{\GN{f}(\gx) < \sup \setof {\gz^{\gx,\gnv}}{\gnv\in \Lev_{\max}S^\gx, \gnv < \gm}}$.					

	Set $T^* = \bigintersect_{\gx < \gs} T^\gx$ and
		$p^* = p\restricted n^p-1 \append \ordered{f^*, T^*}$.
						We claim $p^* \forces \formula{\GN{f} \text{ is bounded}}$.
						To show this	 set $\gt = \sup \setof {\gt_\gx}{\gx < \gs}$.
						Note $\gt < \mo(\Vec{E})=\gl$.
							Since $\setof{p^*_{\ordered{\gm}}}{\gm \in T^{*},\ 
										\mo(\gm) = \ogm(\gt)}$ is
									predense below $p^*$ it is enough to show that
											$p^*_{\ordered{\gm}} \forces \formula{\GN{f} \text{ is bounded}}$
													for each $\gm \in \Lev_0 T^{*}$ such that 
															$\mo(\gm) = \ogm(\gt)$.
									So fix $\gm \in \Lev_0 T^{*}$ such that 
															$\mo(\gm) = \ogm(\gt)$.
									Set $\gz = \sup \setof {\gz^{\gx,\gnv}}{\gx < \gs, \gnv\in \Lev_{\max}S^\gx, \gnv<\gm}$.
										Note $\gz < \gk$.
					We get for each $\gx < \gs$,
					$p^*_{\ordered{\gm}} \leq^* p\restricted n^p-1 \append p^{\gx}_{\ordered{\gm}} \forces \formula {\GN{f}(\gx) < \sup \setof {\gz^{\gx,\gnv}}{\gnv\in \Lev_{\max} S^\gx, \gnv < \gm}<\gz<\gk}$.
\end{proof}
\begin{definition}
An ordinal $\gr < \mo(\Vec{E})$ is a repeat point of $\Vec{E}$
	if for each $d \in [\gee]^{<\gl}$,
			$\bigintersect _{\gx < \gr}E_\gx(d) = \bigintersect_{\gx < \mo(\Vec{E})} E_\gx(d)$.
\end{definition}
\begin{lemma}
Assume $\gr < \mo(\Vec{E})$ is a repeat point of $\Vec{E}$.
\begin{enumerate}
\item
	If $p,q \in \PP$ are compatible then
			$j_{E_\gr}(p)_{\ordered{\mc_\gr(p)}}$ and $j_{E_\gr}(q)_{\ordered{\mc_\gr(q)}}$ are compatible.
\item
	For each $p \in \PP$ there is a direct extension $p^* \leq^* p$ such that
		$j_{E_\gr}(p^*)_{\ordered{\mc_\gr(p^*)}} \decides \formula{ \mc_\gr(p^*) \in j_{E_\gr}(\GN{A})}$.
\end{enumerate}
\end{lemma}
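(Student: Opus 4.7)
For part (1), the plan is to take a common extension $r \in \PP$ of $p$ and $q$; without loss of generality, $\dom f^{r_{n^r-1}} \supseteq \dom f^{p_{n^p-1}} \union \dom f^{q_{n^q-1}}$. By elementarity, $j_{E_\gr}(r) \leq j_{E_\gr}(p), j_{E_\gr}(q)$ in $j_{E_\gr}(\PP)$. Since $T^{r_{n^r-1}} \in \vE(f^{r_{n^r-1}}) \subseteq E_\gr(f^{r_{n^r-1}})$, the characteristic function $\mc_\gr(r)$ lies in $j_{E_\gr}(T^{r_{n^r-1}})$, so $j_{E_\gr}(r)_{\ordered{\mc_\gr(r)}}$ is a legitimate condition of $j_{E_\gr}(\PP)$. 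The key observation is that $\mc_\gr$ is defined pointwise on its domain, so $\mc_\gr(r) \restricted \dom f^{p_{n^p-1}} = \mc_\gr(p)$. Unwinding the definitions of the $\downarrow$ and $\uparrow$ operations then shows that $j_{E_\gr}(r)_{\ordered{\mc_\gr(r)}} \leq j_{E_\gr}(p)_{\ordered{\mc_\gr(p)}}$ in $j_{E_\gr}(\PP)$, and symmetrically for $q$, providing the desired common extension.

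For part (2), the plan is to first produce a direct extension $p^* \leq^* p$ such that $p^*_{\ordered{\gn}}$ decides $\formula{\VN{\gn} \in \GN{A}}$ for every $\gn \in T^{p^*_{n^p-1}}$, and then read off the decision using the ultrafilter $E_\gr(f^{p^*_{n^p-1}})$. For each fixed $\gn$, the Prikry property (\cref{PrikryProperty}) supplies a direct extension $s(\gn) \leq^* p_{\ordered{\gn}}$ deciding $\formula{\VN{\gn} \in \GN{A}}$; the family $\setof{s(\gn)}{\gn \in T^{p_{n^p-1}}}$ must then be spliced into a single $p^* \in V$. This splicing uses the good pair and $\vE(f)$-tree machinery of Section~\ref{sec:Dense}: the upper blocks $T^{s(\gn)_\uparrow}$ are merged via the diagonal intersection of \cref{DiagonalIsBig}, while the lower blocks $f^{s(\gn)_\downarrow}$ are absorbed into a common $g \supseteq f^{p_{n^p-1}}$ in the manner of \cref{DenseHomogenOneBlockCase1} (by adjoining the witness $f^{j_{E_\gx}(s_\downarrow)(\mc_\gx(f^{p_{n^p-1}}))}$).

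Once $p^*$ is in hand, partition $T^{p^*_{n^p-1}}$ into $X = \setof{\gn}{p^*_{\ordered{\gn}} \forces \VN{\gn} \in \GN{A}}$ and its complement. Since $E_\gr(f^{p^*_{n^p-1}})$ is an ultrafilter, one of the two sets belongs to it; assume $X \in E_\gr(f^{p^*_{n^p-1}})$, the other case being symmetric. Then $\mc_\gr(p^*) \in j_{E_\gr}(X)$, and applying $j_{E_\gr}$ to the statement \emph{for every $\gn \in X$, $p^*_{\ordered{\gn}} \forces \VN{\gn} \in \GN{A}$} yields $j_{E_\gr}(p^*)_{\ordered{\mc_\gr(p^*)}} \forces \mc_\gr(p^*) \in j_{E_\gr}(\GN{A})$.

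The main obstacle is the splicing step of part (2): the Prikry property decides a single formula at a time, whereas here the parameter $\gn$ in $\formula{\VN{\gn} \in \GN{A}}$ varies over $T^{p_{n^p-1}}$, so a naive union of the $s(\gn)$ is unavailable. Overcoming this requires working inside a good pair to absorb the lower-part extensions uniformly and then using normality to merge the upper trees, exactly along the lines of the proofs in Section~\ref{sec:Dense}. The repeat-point hypothesis on $\gr$ ensures that the lower block of $j_{E_\gr}(p^*)_{\ordered{\mc_\gr(p^*)}}$ is governed by $\vE \restricted \gr$, whose measures agree with those of $\vE$, so the homogenisation carried out in $V$ transfers to the $M_\gr$-side without loss. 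Part (1) is then essentially bookkeeping once one notes the pointwise nature of $\mc_\gr$.
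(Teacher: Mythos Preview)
Your plan is correct and follows essentially the same route as the paper. For part (1) the paper first reduces to the case where $p$ and $q$ are $\leq^*$-compatible (by passing through the intermediate $p',q'$ with $r \leq^* p',q'$) before computing $j_{E_\gr}(p)_{\ordered{\mc_\gr(p)}} = p \append \ordered{j_{E_\gr}(p_{n^p-1})_{\ordered{\mc_\gr(p_{n^p-1})}\uparrow}}$ explicitly; your ``unwinding the definitions'' amounts to the same computation carried out directly on $r$. For part (2) the paper does exactly what you describe: apply the Prikry property at each $\gn$, use a good pair to replace the upper block by $\ordered{f^*_{\ordered{\gn}\uparrow}, T_1(\gn)}$, stabilise the part below $\gk$ to a single $s$ on an $E_\gr$-large set, absorb the lower parts via $g = f^* \cup f^{j_{E_\gr}(r)(\mc_\gr(f^*))}$, diagonally intersect the $T_1(\gn)$, and read off the decision at $\mc_\gr$. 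Two small points: the stabilisation of the part below $\gk$ (your $s(\gn)$ restricted to $p\restricted n^p-1$) is a separate step you should mention, and the decision is only secured on an $E_\gr$-measure-one set of $\gn$, not on all of $T^{p^*_{n^p-1}}$ --- but that is all that is needed.
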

\begin{proof}
\begin{enumerate}
\item
	Let $r \leq p,q$.
	By definition of the order there are extensions  $p' \leq p$ and $q' \leq q$ such that
		$r \leq^* p',q'$.
	By elementarity $j_{E_\gr}(p')_{\ordered{\mc_\gr(p')}} \leq j_{E_\gr}(p)_{\ordered{\mc_\gr(p)}}$ and
				$j_{E_\gr}(q')_{\ordered{\mc_\gr(q')}} \leq j_{E_\gr}(q)_{\ordered{\mc_\gr(q)}}$.
	Thus we will be done by showing
		$j_{E_\gr}(p')_{\ordered{\mc_\gr(p')}}$ and
		$j_{E_\gr}(q')_{\ordered{\mc_\gr(q')}}$ are compatible.
		
	So, without loss of generality assume $p$ and $q$ are $\leq^*$ compatible.
	By elementarity 
		$j_{E_\gr}(p)$ and	$j_{E_\gr}(q)$ are compatible.
	Note
	\begin{align*}
		& p_{n^p-1} = j_{E_\gr}(p_{n^p-1})_{
						\ordered{\mc_\gr(p_{n^p-1})}\downarrow}
						\intertext{and}
		& q_{n^q-1} = j_{E_\gr}(q_{n^q-1})_{
					\ordered{\mc_\gr(q_{n^q-1})}\downarrow}.
		\end{align*}
		Then
	\begin{align*}
		& j_{E_\gr}(p)_{\ordered{\mc_\gr(p_{n^p-1})}} = 
				p \append \ordered{ 
						j_{E_\gr}(p_{n^p-1})_{\ordered{\mc_\gr(p_{n^p-1})}\uparrow}} 
	\intertext{and}
		& j_{E_\gr}(q)_{\ordered{\mc_\gr(q_{n^q-1})}} =
			 q \append \ordered{ 
			 	j_{E_\gr}(q_{n^q-1})_{\ordered{\mc_\gr(q_{n^q-1})}\uparrow}}.
	\end{align*}
	We are done.
\item
	Let $\ordered{N, f^*}$ be a good pair such that $f^* \leq^* f^{p_{n^p-1}}$ and $p, \GN{A} \in N$.
		Set $T = \gp^{-1}_{f^*, f^{p_{n^p-1}}}T^p$.
	Fix $\gn \in T$ and consider the condition $p \restricted n^p-1 \append \ordered{f^*,T}_{\ordered{\gn}}$.
  By the Prikry property there is 
			$s \leq^* p \restricted n^p-1$,
		 				$r \leq^* \ordered{f^*, T}_{\ordered{\gn}\downarrow}$,
		 					and $q \leq^* \ordered{f^*, T}_{\ordered{\gn}\uparrow}$ such that
				$s \append r \append q \decides \formula{\gn \in \GN{A}}$.
Since the set $\setof{t \leq p}{t \decides \formula{\gn \in \GN{A}}}$ is dense open
		below $p$ and belongs to $N$, we get by \cref{BasicReflection} that there is a set
		$T_1(\gn) \in \Vec{E}(f^*)$ such that
$s \append r \append \ordered{f^*_{\ordered{\gn}\uparrow}, T_1(\gn)} \decides \formula{\gn \in \GN{A}}$.

Thus for each $\gn \in T$ there is
			$s(\gn) \leq^* p \restricted n^p-1$,
		 				$r(\gn) \leq^* \ordered{f^*, T}_{\ordered{\gn}\downarrow}$,
							and $T_1(\gn) \in \Vec{E}(f^*)$ such that
			$s(\gn) \append r(\gn) \append \ordered{f^*_{\ordered{\gn}\uparrow}, T_1(\gn)} \decides \formula{\gn \in \GN{A}}$.
We can find a set $T_{=\gr} \in E_\gr(f^*)$ and
	$s \leq^* p \restricted n^p-1$ such that
				 for each $\gn \in T_{=\gr}$, $s(\gn) = s$.
Thus for each $\gn \in T_{=\gr}$,
	$s \append r(\gn) \append \ordered{f^*_{\ordered{\gn}\uparrow}, T_1(\gn)} \decides \formula{\gn \in \GN{A}}$.
Then by removing a measure set from $T_{=\gr}$ we can have either
\begin{align*}
	& 	\forall \gn \in T_{=\gr}\ 
							s \append r(\gn) \append \ordered{f^*_{\ordered{\gn}\uparrow}, T_1(\gn)} \forces \formula{\gn \in \GN{A}}
	\intertext{or}
& 	\forall \gn \in T_{=\gr}\ 
				 s \append r(\gn) \append \ordered{f^*_{\ordered{\gn}\uparrow}, T_1(\gn)} \forces \formula{\gn \notin \GN{A}}.
\end{align*}
	Let $g = f^* \union f^{j(r)(\mc_\gr(f^*))}$.
	Set $T^* = \gp^{-1}_{g,f^*}T^{j_{E_\gr(r)}(\mc_\gr(f^*))} \intersect \gp^{-1}_{g,f^*}\dintersect_{\gn \in T_{=\gr}} T_1(\gn)$.
Setting $p^* = s \append \ordered{g,T^*}$ we get 
	for each $\gn \in T^*_{=\gr}$,
		$p^*_{\ordered{\gn}} \leq^*
			s \append r(\gn) \append \ordered{f^*_{\ordered{\gn}\uparrow}, T_1(\gn)}$.
Thus by removing a measure zero set from $T^*_{=\gr}$ we get either
\begin{align*}
	& 	\forall \gn \in T^*_{=\gr}\ 
							p^*_{\ordered{\gn}} \forces \formula{\gn \in \GN{A}}
	\intertext{or}
& 	\forall \gn \in T^*_{=\gr}\ 
							p^*_{\ordered{\gn}} \forces \formula{\gn \notin \GN{A}}.
\end{align*}
Going to the ultrapower we get
			$j_{E_\gr}(p^*)_{\ordered{\mc_\gr(g)}} \decides \formula{\mc_\gr(g) \in j_{E_\gr}(\GN{A})}$.
\end{enumerate}
\end{proof}
\begin{corollary}
Assume $\gr < \mo(\Vec{E})$ is a  repeat point of $\Vec{E}$.
Then $ \forces \formula {\gk \text{ is measurable}}$.
\end{corollary}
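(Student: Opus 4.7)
The plan is to lift $j = j_{E_\gr}$ to $V[G]$ and read off a $\gk$-complete ultrafilter $U$ witnessing that $\gk$ is measurable. Concretely, in $V[G]$, for each $A \subseteq \gk$ I would declare
\[
A \in U \iff \exists p \in G\ \exists \GN{A}\ \left(\GN{A}^G = A\ \wedge\ j(p)_{\ordered{\mc_\gr(p)}} \forces_{j(\PP)} \formula{\mc_\gr(p) \in j(\GN{A})}\right).
\]
The two clauses of the preceding lemma are tailored to furnish what $U$ needs: clause~(1) gives coherence (and hence well-definedness) and clause~(2) gives decidability of membership, so the verification of the ultrafilter axioms reduces to reading off these tools carefully.

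First I would verify well-definedness of $U$. Suppose $p_0, p_1 \in G$ together with names $\GN{A}_0, \GN{A}_1$ both interpreted as $A$ witness, respectively, that $A \in U$ and that $\gk \setminus A \in U$. Picking $r \in G$ with $r \leq p_0, p_1$, clause~(1) of the preceding lemma yields that $j(p_0)_{\ordered{\mc_\gr(p_0)}}$ and $j(p_1)_{\ordered{\mc_\gr(p_1)}}$ are compatible in $j(\PP)$; below a common extension that also forces $\GN{A}_0 = \GN{A}_1$ the two opposing decisions collide. To see that $U$ is an ultrafilter, apply clause~(2) to an arbitrary name $\GN{A}$ for a subset of $\gk$: for each $p \in \PP$ there is a direct extension $p^* \leq^* p$ such that $j(p^*)_{\ordered{\mc_\gr(p^*)}}$ decides $\formula{\mc_\gr(p^*) \in j(\GN{A})}$. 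Such $p^*$'s are dense, so some such condition lies in $G$, giving $A \in U$ or $\gk \setminus A \in U$. Closure under supersets is immediate, and closure under pairwise intersection follows from the observation that a condition forcing both $\mc_\gr(p) \in j(\GN{A})$ and $\mc_\gr(p) \in j(\GN{B})$ forces $\mc_\gr(p) \in j(\GN{A}\intersect \GN{B})$, together with the compatibility furnished by clause~(1).

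For $\gk$-completeness I would treat a sequence $\ordof{A_\ga}{\ga < \gs} \in V[G]$, with $\gs < \gk$, of sets in $U$, carried by a name $\ordof{\GN{A}_\ga}{\ga < \gs} \in V$. Iterating clause~(2) of the preceding lemma $\gs$ times, and using the Prikry property (\cref{PrikryProperty}) together with the $\gk$-closure of the direct-extension order available on the last block (from the construction of good pairs in \cref{GetGoodPair}) to glue the successive strengthenings into a single condition $p^* \in G$, I obtain a $p^*$ such that for every $\ga < \gs$ the condition $j(p^*)_{\ordered{\mc_\gr(p^*)}}$ affirmatively decides $\formula{\mc_\gr(p^*) \in j(\GN{A}_\ga)}$. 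Since $\gs < \gk = \crit j$, the embedding fixes the indexing sequence, and elementarity yields $\bigintersect_{\ga < \gs} A_\ga \in U$. The principal obstacle will be the well-definedness step: clause~(1) asserts only compatibility of the lifted conditions, not that two such conditions make identical decisions on a fixed name, and extracting agreement requires passing to a common extension in $G$ that identifies the two names before invoking compatibility. Once this is negotiated, the rest is the standard ultrafilter-from-a-ground-model-embedding argument, giving $\forces \formula{\gk \text{ is measurable}}$.
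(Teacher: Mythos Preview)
Your approach is exactly the paper's: the paper defines
\[
U = \setof {\GN{A}[G]}{p \in G,\ j_{E_\gr}(p)_{\ordered{\mc_\gr(p_{n^p-1})}} \forces \formula{\mc_{\gr}(p_{n^p-1})\in j_{E_\gr}(\GN{A})}}
\]
and then simply asserts that it is ``a simple matter to check'' that $U$ is the witnessing ultrafilter; you have written out that check, invoking clauses~(1) and~(2) of the preceding lemma precisely as they were designed to be used. One small point worth tightening in your $\gk$-completeness step: clause~(2) only gives a direct extension that \emph{decides} the membership statement, not one that decides it affirmatively; the affirmative answer comes afterwards from compatibility (clause~(1)) with the witnesses $q_\ga \in G$ for $A_\ga \in U$, so you should separate the density argument (finding $p^* \in G$ deciding all $\gs$ statements) from the coherence argument (showing each decision must be positive).
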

\begin{proof}
If $G \subseteq \PP$ is generic then it is a simple matter to check that
\begin{align*}
U = \setof {\GN{A}[G]}
			{p \in G,\ 
			j_{E_\gr}(p)_{\ordered{\mc_\gr(p_{n^p-1})}} \forces \formula{\mc_{\gr}(p_{n^p-1})\in j_{E_\gr}(\GN{A})}}
\end{align*}
is the witnessing ultrafilter.
\end{proof}
\begin{claim} \label{BecomesMeasurable}
If $\mo(\Vec{E}) = \gl^{++}$ then
	$\forces \formula {\gk \text{ is measurable}}$.
\end{claim}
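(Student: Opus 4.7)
The plan is to reduce to the preceding corollary by exhibiting a repeat point $\gr<\mo(\vE)$; once such a $\gr$ is in hand, that corollary immediately yields $\forces\formula{\gk \text{ is measurable}}$, so no new forcing argument is needed.

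To find the repeat point, fix $d\in[\gee]^{<\gl}$ and consider the $\subseteq$-decreasing sequence of filters $F_d(\gr)=\bigintersect_{\gx<\gr}E_\gx(d)$ on $\OB(d)$, indexed by $\gr\leq\mo(\vE)=\gl^{++}$. Since $|d|<\gl$ and $\gl^{<\gk}=\gl$ by hypothesis, $|\OB(d)|\leq\gl$, and GCH then bounds the number of filters on $\OB(d)$ by $2^\gl=\gl^+$. Hence the chain $F_d$ can strictly drop at fewer than $\gl^+$ stages, and there is some $\gr_d<\gl^+$ such that $F_d(\gr)=F_d(\mo(\vE))$ for every $\gr\in[\gr_d,\mo(\vE))$.

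Next I would set $\gr=\sup_d\gr_d$; by construction this is a repeat point of $\vE$. The task is to verify $\gr<\mo(\vE)=\gl^{++}$, which reduces to controlling $|[\gee]^{<\gl}|$. Using $\gee\leq j_{E_0}(\gk)$, the directedness assumptions on $\vE$, and GCH in $V$, one gets $|[\gee]^{<\gl}|\leq\gl^+$, so $\gr\leq\gl^+<\gl^{++}$ as required. Invoking the previous corollary with this $\gr$ then completes the proof.

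The main obstacle is not combinatorial but arithmetic: the bookkeeping needed to pin down $|[\gee]^{<\gl}|\leq\gl^+$ so that the supremum $\sup_d\gr_d$ remains strictly below $\gl^{++}$. Once that bound is in place, everything else is a straightforward counting argument followed by a direct appeal to the preceding corollary.
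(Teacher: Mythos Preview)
Your proposal is correct and follows essentially the same approach as the paper: for each $d$ use the decreasing chain of filters $\bigintersect_{\gx<\gr}E_\gx(d)$ to find a stabilization point $\gr_d$, set $\gr=\sup_d\gr_d$, and invoke the preceding corollary. One small correction: from $\leq\gl^+$ many filters you only get $\gr_d<\gl^{++}$ (not $\gr_d<\gl^+$), but combined with your bound $|[\gee]^{<\gl}|\leq\gl^+$ and $\cf(\gl^{++})=\gl^{++}$ this still yields $\gr<\gl^{++}$, which is exactly what the paper asserts.
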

\begin{proof}
By the previous corollary it is enough to exhibit $\gr < \gl^{++}$ 
	which is a repeat 	point of $\Vec{E}$.
Fix  $d \in [\gee]^{<\gl}$ 
	and consider the sequence 
			$\ordof {\bigintersect_{\gx' < \gx} E_{\gx'}(d)}{\gx < \gl^{++}}$.
This is a $\subseteq$-decreasing sequence of filters on $\OB(d)$.
Since there are $\gl^+$ filters on $\OB(d)$ there is
	$\gr_d < \gl^{++}$ such that
		$\bigintersect_{\gx < \gr_d} E_{\gx}(d) = 
			\bigintersect_{\gx < \gl^{++}} E_{\gx}(d)$.
Set $\gr = \sup \setof{\gr_d}{d \in [\gee]^{<\gl}}$.
Then $\gr$ is a repeat point of $\vE$.
\end{proof}
%
%
%
%
%
%
%
%
%
%

%
%
%
%
%
%
\end{document}